\documentclass[11pt]{article}
\usepackage[margin=1in]{geometry}
\usepackage[T1]{fontenc}
\usepackage[utf8]{inputenc}
\usepackage{amsmath,amssymb,amsthm}
\usepackage{mathtools}
\usepackage{hyperref}
\usepackage{enumitem}
\usepackage{booktabs}
\hypersetup{colorlinks=true,linkcolor=blue,citecolor=blue,urlcolor=blue}

\theoremstyle{plain}
\newtheorem{theorem}{Theorem}[section]

\newtheorem{proposition}[theorem]{Proposition}
\newtheorem{conjecture}[theorem]{Conjecture}
\theoremstyle{definition}

\theoremstyle{remark}
\newtheorem{remark}[theorem]{Remark}
\newtheorem{openproblem}[theorem]{Open Problem}

\DeclareMathOperator{\spec}{spec}
\DeclareMathOperator{\diam}{diam}
\DeclareMathOperator{\Tr}{Tr}

\DeclareMathOperator{\Deg}{Deg}

\title{The Spectral Theory of Threshold Graphs: A Survey}

\author{
	Pallabi Manna$^{a}$ \and Santanu Mandal$^{b}$ \and
	Fernando C. Tura$^{c}$
}

\date{
	\small
	$^a$Department of Mathematics, Indian Institute of Technology Hyderabad,\\ Kandi, Sangareddy-502284, Telangana, India\\
	\texttt{mannapallabimath001@gmail.com}\\[2mm]
	$^{b}$Department of Computer Science and Engineering, SRM University-AP,\\ Amaravati-522240, Andhra Pradesh, India\\
	\texttt{santanu.vumath@gmail.com}  \\[2mm]
	$^c$Departamento de Matemática, UFSM, 97105–900 Santa Maria, RS, Brazil\\
	\texttt{fernando.tura@ufsm.br} 
	\\[4mm]
	\today
}
\begin{document}
	\maketitle
	
	\begin{abstract}
		Threshold graphs — the $\{P_4, C_4, 2K_2\}$-free graphs, equivalently those built from a single vertex by repeatedly adding isolated or dominating vertices — are among the most thoroughly studied graph classes in spectral graph theory. This survey collects results on the spectrum of threshold graphs across the full range of matrices studied in the literature: the adjacency, Laplacian, and signless Laplacian matrices; the distance, distance Laplacian, and distance signless Laplacian matrices; the eccentricity and Seidel matrices; and the normalized adjacency/Laplacian matrix — together with the associated graph energies, cospectrality questions, extremal spectral radius problems, and several generalizations (weighted, quasi-threshold, hypergraph, and random threshold graphs). We emphasize results from the recent years, including the foundational work of Chv\'atal--Hammer, Merris, Bapat, and Jacobs--Lazzarin--Trevisan--Tura. We also present a study of integral threshold graphs (those with all-integer spectrum) across all seven matrices. We close with a list of open problems and conjectures drawn directly from the surveyed literature and from this new analysis.
		
		\medskip
		\noindent\textbf{AMS Subject Classification:} 05C50, 05-02, 15-02.
		
		\smallskip
		\noindent\textbf{Keywords:} Threshold graphs, graph matrices-adjacency, Laplacian, signless Laplacian, distance, distance Laplacian, distance signless Laplacian, eccentricity, Seidel, normalized adjacency; cospectral, eigenvalue-free interval, integral graphs.
	\end{abstract}
	
	
	\section{Introduction}
	\label{sec:intro}
	
	A \emph{threshold graph} is a finite simple graph that can be built from a single vertex by repeatedly performing one of two operations: adding a new \emph{isolated} vertex (adjacent to nothing already present), or adding a new \emph{dominating} vertex (adjacent to everything already present). The resulting sequence of choices, written as a binary string $b_1b_2 b_3 \cdots b_n \in \{0,1\}^{n-1}$ (vertex $i$ isolated if $b_i = 0$, dominating if $b_i=1$), is called the \emph{creation sequence} of the graph, and completely determines it up to the labeling of vertices \cite{ChvatalHammer1977}.
	
	Threshold graphs admit several equivalent characterizations that explain both their name and their ubiquity:
	\begin{itemize}
		\item \textbf{Forbidden subgraphs.} $G$ is a threshold graph if and only if it contains no induced $P_4$ (path on four vertices), $C_4$ (cycle on four vertices), or $2K_2$ (two disjoint edges) \cite{ChvatalHammer1977}.
		\item \textbf{Degree-threshold property.} There exists a real weighting $w:V(G)\to\mathbb{R}$ and a threshold $t$ such that $uv\in E(G)$ if and only if $w(u)+w(v) > t$ — the origin of the name.
		\item \textbf{Nested neighborhoods.} The vertices can be linearly ordered so that the neighborhoods (outside the vertex itself) are nested, which is why threshold graphs are also called \emph{nested split graphs} \cite{MahadevPeled1995}.
		\item \textbf{Extremal degree sequences.} Threshold graphs are exactly the graphs whose degree sequence is not majorized by any other graphic sequence of the same length and sum — the maximal graphs in the majorization order on graphic sequences, a fact underlying their role in Laplacian spectral extremal problems (Section~\ref{sec:laplacian-energy}).
	\end{itemize}
	
	Threshold graphs form a proper subclass of \emph{split graphs} (vertex set partitions into a clique and an independent set), \emph{cographs} ($P_4$-free graphs, built by disjoint union and join), and \emph{interval graphs}. Within the broader cograph/split-graph hierarchy, threshold graphs are the unique graphs obtainable by a construction using \emph{only} the join operation at every step (never a plain disjoint union) when cographs are built from single vertices — equivalently, threshold graphs are exactly the \emph{totally ordered} cographs in the natural partial order on creation sequences. Numerous applications are documented in \cite{MahadevPeled1995}, ranging from scheduling and synchronization in computer science to psychological scaling.
	
	\subsection{A field still very much in motion}
	
	Although the adjacency and Laplacian spectral theory of threshold graphs is largely complete — settled across the 1990s and 2010s by Merris, Bapat, and the Jacobs--Trevisan--Tura program — the class continues to generate active research at essentially every matrix surveyed in this paper, with multiple results appearing as recently as 2025, 2026 and 2027. Three threads illustrate this well. First, the question of exactly how few distinct eigenvalues a threshold graph's spectrum can contain has only just been settled in full generality: Allem, Hoppen, Lazzarin, Sibemberg, and Tura proved in 2025 that every connected threshold graph on $n$ vertices admits, for every $n$, some threshold graph with at most four distinct adjacency eigenvalues \cite{AllemHoppenEtAl2025} — a clean, order-independent bound obtained only after a decade of finer-grained classification results for graphs with exactly two, three, four, or five distinct eigenvalues (Section~\ref{sec:distincteig-adjacency}). A companion line of work by Allem, Oliveira, and Tura instead asks which eigenvalues can be excluded entirely from a threshold graph's spectrum, giving explicit generating procedures for eigenvalue-free families \cite{AllemOliveiraTura}. The inverse problem for integer indices looks into whether graphs with a given integer as their indices exist. This problem is solved very recently by Schmidt et al. \cite{STL} within the class of
	threshold graphs. Second, the distance-based spectral theory — distance, distance Laplacian, and (as detailed in Section~\ref{sec:distance-signless}) still-open distance signless Laplacian spectra — has seen two independent, essentially simultaneous closed-form treatments of the distance Laplacian spectrum appear within the same year: Lather, Gupta, and Balaji \cite{LatherGuptaBalaji2025} derive it via the master identity and Merris's theorem, obtaining sharp cospectrality and extremal-coefficient consequences, while Megha and Parvathy \cite{MeghaParvathy2026} derive it independently via the equitable partition directly (Theorems~\ref{thm:dlthreshold}--\ref{thm:dlthreshold2} below); this sits alongside continuing, separate work on the plain distance spectrum and when non-isomorphic threshold graphs can share one \cite{LuHuangLou2018,LazzarinEtAl2023}. Third, the Seidel spectral theory, initiated by Del-Vecchio and Abd\'on \cite{DelVecchioAbdon2022} and substantially extended by Mandal and Mehatari \cite{MandalMehatari2024}, exemplifies a pattern — a foundational classification followed, often within one or two years, by a more detailed structural and cospectrality analysis — that recurs across nearly every matrix treated in this survey. We have organized the paper so that each section reflects this same historical arc: a foundational closed-form or classification result, followed by the sharper, more structural or more recent results it enabled.
	
	\subsection{A new contribution: integral threshold graphs and $D^Q$ spectrum}
	\textbf{Integral threshold graphs:}\\
	Beyond compiling the literature above, Section~\ref{sec:integral} presents a uniform study of integrality (all eigenvalues being integers across all seven matrices). The twin-class and master-identity mechanisms responsible for Table~\ref{tab:cospectral}'s cospectrality dichotomy turn out to govern integrality in strikingly parallel fashion. We show that the Laplacian and distance Laplacian are integral for \emph{every} connected threshold graph, a direct consequence of Merris's theorem (Theorem~\ref{thm:merris}) and the master identity (Proposition~\ref{prop:master}) that does not appear to have been stated explicitly for $D^L$ before. For the remaining five matrices — adjacency, Seidel, distance, distance signless Laplacian, and eccentricity — no such unconditional statement holds, and we instead give a complete characterization for the simplest family of threshold graphs (the complete split graphs $CS(s,t)$): the eccentricity matrix turns out to be integral for \emph{every} such graph via an exact block-diagonal mechanism, while the other four reduce to explicit perfect-square-discriminant (Diophantine) conditions on $(s,t)$, each satisfied by infinitely many non-trivial instances. For general $m\ge2$, we prove that the eccentricity matrix's last block always splits off cleanly regardless of its size (Theorem~\ref{thm:lastblock}), reducing but not resolving the problem, and we pose several concrete conjectures throughout the paper — on the residual $m=2$ eccentricity condition (Conjecture~\ref{conj:ecc-m2}), and on a master-identity criterion (Conjecture~\ref{conj:masteridentity}) unifying this section's integrality dichotomy with Section~\ref{sec:cospectrality}'s cospectrality dichotomy — alongside Open Problem~\ref{op:integralgeneral} for the fully general case.\\ \textbf{Distance signless Laplacian spectrum:}\\
	We compute the local eigenvalues of $D^Q$ in terms of the vertex transmission (see Theorem \ref{thm:DQlocal}). The closed form of the corresponding quotient matrix is defined in Theorem \ref{thm:DQquotient}. The detail discussion can be found in Section \ref{sec:distance-signless}.
	
	\subsection{Why threshold graphs are spectrally tractable}
	
	The nested-neighborhood structure makes threshold graphs an unusually favorable setting for spectral computation, for three structural reasons that recur throughout this survey:
	
	\begin{enumerate}
		\item \textbf{Twin classes.} Grouping consecutive equal bits of the creation sequence into maximal runs $0^{s_1}1^{t_1}0^{s_2}1^{t_2}\cdots 0^{s_m}1^{t_m}$ partitions the vertex set into $2m$ classes, within each of which all vertices are \emph{twins} (true twins — identical closed neighborhoods — within a dominating run; false twins — identical open neighborhoods — within an isolated run). Twin vertices $u,v$ always give an explicit eigenvector $e_u - e_v$ for whichever graph matrix is under study, with an eigenvalue read off directly from the local structure. This is the source of the large-multiplicity, exactly computable \emph{local} eigenvalues found for every matrix surveyed below.
		\item \textbf{The equitable partition / quotient matrix.} The same $2m$-class partition is \emph{equitable} for every matrix considered here (adjacency, Laplacian, signless Laplacian, distance-type, Seidel, eccentricity, normalized), because row sums into each class depend only on the class of the source vertex. This yields a $2m \times 2m$ (or smaller) \emph{quotient matrix} whose eigenvalues are exactly the remaining, generically irrational eigenvalues of the full $n\times n$ matrix — reducing an $n$-dimensional eigenvalue problem to one of size $2m \ll n$, and this is the single most-used technique across the entire literature surveyed here.
		\item \textbf{Diameter at most 2.} Every connected threshold graph with at least one dominating vertex has diameter at most $2$, so its distance matrix is simply $D(G) = 2(J-I) - A(G)$. This is what makes the distance, distance Laplacian, and distance signless Laplacian spectra of threshold graphs tractable at all, and is responsible for the clean \emph{master identity} connecting the distance Laplacian spectrum to the ordinary Laplacian spectrum (Section~\ref{sec:distance-laplacian}).
	\end{enumerate}
	
	\subsection{Organization}
	
	We proceed matrix by matrix, broadly in the order these theories were historically developed for threshold graphs: adjacency (Section~\ref{sec:adjacency}, including the number-of-distinct-eigenvalues theme of Section~\ref{sec:distincteig-adjacency}), Laplacian (Section~\ref{sec:laplacian}), signless Laplacian and main eigenvalues (Section~\ref{sec:signless}), graph energies (Section~\ref{sec:laplacian-energy}), distance-type matrices — distance, distance Laplacian, distance signless Laplacian (Section~\ref{sec:distance}) — the eccentricity matrix (Section~\ref{sec:eccentricity}), the Seidel matrix (Section~\ref{sec:seidel}), the normalized adjacency/Laplacian matrix (Section~\ref{sec:normalized}), extremal spectral radius problems (Section~\ref{sec:extremal}), cospectrality questions throughout (Section~\ref{sec:cospectrality}), our new study of integral threshold graphs (Section~\ref{sec:integral}), and generalizations — weighted, quasi-threshold, hypergraph, and random threshold graphs (Section~\ref{sec:generalizations}). We close with open problems and conjectures (Section~\ref{sec:open}) collected directly from the surveyed papers and from Section~\ref{sec:integral}'s new analysis.

	\section{Preliminaries and notation}
	\label{sec:prelim}
	
	Throughout, $G$ is a connected threshold graph on $n$ vertices with creation sequence grouped into maximal runs
	\[
	b = 0^{s_1}1^{t_1}0^{s_2}1^{t_2}\cdots 0^{s_m}1^{t_m}, \qquad s_i,t_i \ge 1,\quad \textstyle\sum_{i=1}^m (s_i+t_i) = n,
	\]
	with $m$ the number of \emph{dominating runs} (equivalently isolated runs). We write $A(G)$ for the adjacency matrix, $\Deg(G)$ for the diagonal degree matrix, $L(G) = \Deg(G)-A(G)$ for the Laplacian, $Q(G) = \Deg(G)+A(G)$ for the signless Laplacian, $D(G)$ for the distance matrix, $\Tr(G)$ for the diagonal matrix of vertex transmissions ($\Tr(u) = \sum_v D(u,v)$), $D^L(G) = \Tr(G)-D(G)$ for the distance Laplacian, $D^Q(G) = \Tr(G)+D(G)$ for the distance signless Laplacian, $S(G) = J-I-2A(G)$ for the Seidel matrix, and $\varepsilon(G)$ for the eccentricity matrix (Section~\ref{sec:eccentricity}). The \emph{anti-regular graph} $H_n$ — the threshold graph with creation sequence $0101\cdots$ of maximal possible alternation — recurs throughout as a canonical extremal/building-block example \cite{AguilarEtAl2020}.
	
	A degree vector $d=(d_1,\ldots,d_n)$, sorted in non-increasing order, has an associated \emph{conjugate (Ferrers/Young) partition} $d^* = (d_1^*,\ldots,d_n^*)$ defined by $d_i^* = |\{j : d_j \ge i\}|$. Threshold graphs are precisely the graphs for which the Laplacian spectrum equals the conjugate of the degree sequence exactly (Theorem~\ref{thm:merris} below) — the ``maximal'' case of the general Grone--Merris--Bai majorization inequality $\spec(L(G)) \preceq d^*$ that holds for every graph.
	
	\section{The adjacency spectrum}
	\label{sec:adjacency}
	
	The adjacency spectral theory of threshold graphs is the oldest and most completely developed part of the subject, due largely to a sequence of papers by Jacobs, Trevisan, Tura, and others, building on Bapat's foundational 2013 paper.
	
	\subsection{Inertia and determinant}
	
	\begin{theorem}[Bapat \cite{Bapat2013}]
		\label{thm:bapat}
		Let $G$ be a threshold graph with creation sequence $b_2\cdots b_n$. Then the inertia (number of negative, zero, and positive eigenvalues) of $A(G)$ is determined explicitly by the binary string: the number of negative eigenvalues equals the number of $1$'s in the string, and the nullity equals the number of $0$'s that are immediately preceded by another $0$ (or are the very first bit). Bapat also gives a closed-form expression for $\det A(G)$ and, when it is nonsingular, for $A(G)^{-1}$.
	\end{theorem}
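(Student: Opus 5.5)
The plan is to argue by induction on $n$, using Sylvester's law of inertia. At each step I will apply an explicit congruence $A\mapsto PAP^{T}$ that splits off a small block of known inertia and leaves the adjacency matrix of a smaller threshold graph, whose creation sequence is a truncation of the original one.

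\textbf{Convention check.} I read the statement with the initial vertex $v_1$ regarded as a leading $0$, which is why a first bit $b_2=0$ counts toward the nullity. I will also assume $G$ connected, i.e.\ $b_n=1$, as in Section~\ref{sec:prelim}. Without this the formula needs the obvious correction: $2K_1$, with string ``$0$'', has nullity $2$, not $1$. A quick sanity check: $K_2$ (``$1$'') has inertia $(1,0,1)$ and $K_{1,2}$ (``$01$'') has $(1,1,1)$, both consistent with the formula.

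\textbf{Step 1: repeated zeros.} Suppose $b_i=b_{i-1}=0$, where $b_1=0$ stands for $v_1$. Then $v_{i-1}$ and $v_i$ are non-adjacent and both are adjacent exactly to the later dominating vertices, so they are false twins. Subtracting row and column $v_{i-1}$ from row and column $v_i$ is a congruence, and it produces $A(G-v_i)\oplus(0)$. The deleted graph is again threshold, with that one $0$ removed from the string. Each such deletion adds exactly one to the nullity and does not change $n_\pm$. After performing all of them we reach a string in which every $0$ is isolated, i.e.\ runs $0\,1^{t_1}0\,1^{t_2}\cdots 0\,1^{t_m}$. The claim then reduces to: this graph is nonsingular with exactly $\sum t_j$ negative eigenvalues, hence $m$ positive ones.

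\textbf{Step 2: peeling off the last run.} Let $u$ be the last isolated vertex, so $u$ is adjacent exactly to the last dominating run $W=\{w_1,\dots,w_t\}$, which induces $K_t$, and every other vertex $x$ is also adjacent to all of $W$. The congruence goes in three moves:
\begin{enumerate}[label=(\roman*)]
\item For every $x\notin W\cup\{u\}$, replace $x$ by $x-u$ (row and column). This kills all $x$–$W$ entries and leaves the $x$–$x'$ entries unchanged, because $u$ has no neighbours outside $W$.
\item Replace $w_k$ by $w_k-w_1$ for $k\ge 2$. Since the $w$'s are true twins, their mutual differences decouple. After this move, $A$ is congruent to $A(G')\oplus M$, where $G'=G-(W\cup\{u\})$ is the threshold graph obtained by deleting the final run $0\,1^{t}$, and $M$ is the $(t+1)\times(t+1)$ matrix on $\{u,w_1,w_2-w_1,\dots\}$.
\item Compute the inertia of $M$ directly. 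It is congruent to $A(K_1\vee K_t)=A(K_{t+1})$, so its inertia is $(t,0,1)$: $t$ negative, no zero, one positive eigenvalue.
\end{enumerate}

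\textbf{Main obstacle.} Move (i) shifts the entries between the $x$'s and $u$: $u$ becomes attached to the $x-u$ vectors with entry $A_{xu}-A_{uu}=0$, but the diagonal and cross terms $A_{uu}$ contributions need checking. These vanish because $A_{uu}=0$ and $u$ has no neighbours among the $x$'s. Move (ii) must be checked in the same way. This bookkeeping of the block decoupling, together with confirming that $G'$ is exactly the threshold graph of the truncated string (including the degenerate final stage $0\,1^{t_1}$, which is $K_{t_1+1}$), is where I expect the care to be needed. If the direct congruence becomes messy, I will fall back on the twin eigenvectors $e_{w_k}-e_{w_1}$ (eigenvalue $-1$) together with the $2\times2$ quotient on $\{u\}\cup W$ to obtain the same $(t,0,1)$ count.

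\textbf{Conclusion and further claims.} Induction on $m$ gives $n_-=\sum t_j$ (the number of $1$'s) and nullity $0$ for the reduced graph, and Step 1 restores the zeros counted by the nullity formula. For the determinant and inverse, the same congruences (all with $\det P=\pm1$) give $\det A(G)=\prod_j\det M_j=\prod_j(-1)^{t_j}t_j$ when no repeated $0$ occurs, and $0$ otherwise. The inverse I would obtain by block-inverting along the same decomposition and transforming back with $P$, without aiming for a particularly clean closed form beyond what this recursion gives.
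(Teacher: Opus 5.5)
Your proof is correct. The survey gives no proof of this statement. It only cites \cite{Bapat2013} and attributes the result to the twin-vertex/equitable-partition machinery, so your argument has to stand on its own, and it does.

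It also uses a genuinely different mechanism from a quotient-matrix proof. On the quotient route, twins supply the local eigenvalues $0$ and $-1$, but one is then left with the real difficulty of counting the signs of the quotient's eigenvalues. Your use of Sylvester's law never computes an eigenvalue. The false-twin deletions account exactly for the $0$'s preceded by a $0$ (with $v_1$ read as a leading $0$). Move (i) alone already yields $A(G)\cong A(G')\oplus A(K_{t+1})$, because $(e_x-e_u)^{T}A\,e_w=1-1=0$ for $w\in W$, $(e_x-e_u)^{T}A\,e_u=0$, and $(e_x-e_u)^{T}A\,(e_{x'}-e_u)=A_{xx'}$. Since every congruence you use is unitriangular, you also get the determinant and the inverse essentially for free, which the quotient approach does not give.

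A few small points. Move (ii) is unnecessary, and its stated justification is inaccurate. The differences $e_{w_k}-e_{w_1}$ are eigenvectors, but they do not decouple from $e_{w_1}$ in the bilinear form, since $(e_{w_k}-e_{w_1})^{T}A\,e_{w_1}=1$. Just drop it; you only need that the block on $\{u\}\cup W$ is $A(K_{t+1})$.

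Your connectivity caveat is right and genuinely needed: $2K_1$ and $K_2\cup K_1$ both violate the nullity formula, and the survey assumes connectivity in Section~\ref{sec:prelim}.

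Finally, the statement claims a closed form for the inverse, so unroll your recursion. In the reduced graph with blocks $B_j=\{u_j\}\cup W_j$, put $f_x=e_x-e_{u_{j+1}}$ for $x\in B_j$ with $j<m$, and $f_x=e_x$ for $x\in B_m$. The matrix $E$ with columns $f_x$ is unitriangular, and $E^{T}AE=\bigoplus_j A(K_{t_j+1})$. Hence $\det A(G)=\prod_j(-1)^{t_j}t_j$ and $A(G)^{-1}=E\bigl(\bigoplus_j(t_j^{-1}J_{t_j+1}-I_{t_j+1})\bigr)E^{T}$, which is an explicit formula.
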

	
	This single result — proved via the same equitable-partition / twin-vertex machinery outlined in Section~\ref{sec:intro} — underlies essentially all subsequent inertia-type results for other matrices associated with threshold graphs.
	
	\subsection{Eigenvalue location and the characteristic polynomial}
	
	\begin{theorem}[Jacobs--Trevisan--Tura \cite{JacobsTrevisanTura2013}]
		\label{thm:eigloc}
		Every eigenvalue of $A(G)$ other than $0$ and $-1$ is \emph{simple}. Moreover, a threshold graph has no eigenvalue in the open interval $(-1,0)$.
	\end{theorem}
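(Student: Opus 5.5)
Our approach combines the twin-class structure with an explicit reduction of the characteristic polynomial to a sequence of rational functions (a Jacobs--Trevisan style diagonalization).

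\textbf{Step 1: twin eigenvalues.} Partition $V(G)$ into the $2m$ twin classes of the run decomposition $0^{s_1}1^{t_1}\cdots0^{s_m}1^{t_m}$. Within an isolated run of size $s_i$ the vertices are false twins. So for $u,v$ in the same run, $A(e_u-e_v)=0$, and these vectors span an $(s_i-1)$-dimensional eigenspace for $0$. Within a dominating run of size $t_i$ the vertices are true twins. Then $A(e_u-e_v)=-(e_u-e_v)$, giving an $(t_i-1)$-dimensional eigenspace for $-1$. (The first vertex must be assigned to the first run consistently, as in Bapat's convention of Theorem~\ref{thm:bapat}.) The orthogonal complement of these vectors is the space of vectors constant on classes. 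By equitability of the partition, the remaining eigenvalues of $A(G)$ are exactly those of the $2m\times 2m$ quotient matrix $B$, whose entries are $B_{XY}=|N(x)\cap Y|$.

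\textbf{Step 2: simplicity on the quotient.} The claim then reduces to two facts. First, every eigenvalue of the symmetrized quotient $\tilde B=P^{1/2}BP^{-1/2}$ (with $P=\mathrm{diag}(|X|)$) other than $0,-1$ is simple. Second, none of them lies in $(-1,0)$. A simple way to get both is to order the classes by creation time and note that $\tilde B$ has a ``nested'' (staircase) zero pattern: class $X$ is adjacent to class $Y$ iff the later of the two is a dominating run. I would then run the elimination algorithm of Jacobs--Trevisan on the path/staircase structure. Processing vertices in creation order, one assigns values $a(v)=-x$ initially and updates each new vertex by $a(v)\leftarrow a(v)-1/a(u)$. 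This produces a diagonal matrix congruent to $A(G)+xI$, and the inertia of $A+xI$ counts eigenvalues less than $-x$ (Sylvester). For $x\in(0,1)$ one then shows by induction along the creation sequence that the signs of the diagonal values are forced. Each dominating vertex contributes exactly one negative value and each isolated vertex one positive value, except for the zero-producing steps already accounted for by twins. This is the same count as at $x=0^+$ and $x=1^-$, so no eigenvalue can be crossed in $(-1,0)$.

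\textbf{Step 3: simplicity via interlacing.} For simplicity, suppose $\lambda\notin\{0,-1\}$ had multiplicity $\ge 2$ in $\tilde B$. Deleting the last class row/column (a dominating class adjacent to all others) gives a principal submatrix. By Cauchy interlacing, $\lambda$ remains an eigenvalue of that submatrix, and indeed of the submatrix obtained by deleting the last two classes. Iterating the argument using the recursive rational function in Step 2 gives a contradiction. The eigenvalue equation for $\tilde B$ is a three-term-like recursion in which an eigenvector is determined by its first coordinate whenever $\lambda\ne 0,-1$, since the pivots $\lambda$ and $\lambda+1$ are nonzero. Hence the eigenspace is one-dimensional.

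The main obstacle is Step 2's sign analysis of the diagonalization values on $(-1,0)$. One must show that the quantity carried forward stays in a range (e.g.\ $a\in(-\infty,-1+\epsilon)\cup(0,\infty)$ appropriately) preserved by both update maps, $a\mapsto -x-1/a$ for the join step and the isolated-step update. I would first try to identify an invariant interval for these Möbius maps with $x\in(0,1)$ and verify it by direct case check.
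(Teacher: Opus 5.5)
Step~1 and the reduction to the quotient are fine, but the claim about $(-1,0)$ has a genuine gap. The update $a(v)\leftarrow a(v)-1/a(u)$ in Step~2 is the Jacobs--Trevisan rule for trees. A one-term update of this kind is a congruence only when the vertex being processed has a single neighbour whose row has already been cleared. A dominating vertex of a threshold graph is adjacent to \emph{all} earlier vertices, so your procedure does not diagonalize $A(G)+xI$. Already for $K_3$ the product of the values it produces is not $\det(A(K_3)+xI)=(x-1)^2(x+2)$, and initial values $-x$ do not match $A(G)+xI$ anyway. The isolated-step map is never specified, so the Möbius maps you would analyse are the wrong ones. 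Since this sign analysis is the entire content of the interval claim, that claim is unproved.

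The missing idea is that consecutive vertices $v_k,v_{k+1}$ in creation order have identical adjacencies to $v_{k+2},\dots,v_n$ (recall $v_i\sim v_j$ for $i<j$ iff $b_j=1$). Fix $x\in(0,1)$. Suppose that, after a congruence, rows $1,\dots,k-1$ are diagonal and split off, while on $\{k,\dots,n\}$ the matrix equals $A(G)+xI$ except for its $(k,k)$ entry $\alpha$ (initially $\alpha=x$). Subtracting row and column $k+1$ from row and column $k$ clears row $k$ beyond column $k+1$ and leaves the block
\[
\begin{pmatrix}\alpha+x-2b_{k+1} & b_{k+1}-x\\ b_{k+1}-x & x\end{pmatrix}
\]
on $\{k,k+1\}$. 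Pivoting on its $(1,1)$ entry makes $d_k=\alpha+x-2b_{k+1}$ final and restores the same situation at $k+1$ with
\[
\alpha'=\frac{\alpha x}{\alpha+x}\quad (b_{k+1}=0),\qquad \alpha'=\frac{1-\alpha x}{2-\alpha-x}\quad (b_{k+1}=1).
\]
Both maps send $(0,1)$ into itself; for the second, $1-\alpha'=(1-\alpha)(1-x)/(2-\alpha-x)>0$. So every pivot lies in $(0,2)$ or $(-2,0)$, and the last value lies in $(0,1)$. Hence $A(G)+xI$ is nonsingular for every $x\in(0,1)$, which is the claim. Sylvester's law also recovers Bapat's count of negative eigenvalues as the number of $1$'s. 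This is essentially the linear-time diagonalization of the cited Jacobs--Trevisan--Tura paper.

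For simplicity, drop the interlacing paragraph. A double eigenvalue need not survive the deletion of two rows, and the ``iteration'' is not justified. Your last two sentences are already a complete proof once written out, and at vertex level you need neither Step~1 nor the quotient. Comparing $(Ay)_k=\lambda y_k$ with $(Ay)_{k+1}=\lambda y_{k+1}$ gives
\[
(\lambda+b_{k+1})(y_k-y_{k+1})=(b_k-b_{k+1})\sum_{i<k}y_i,\qquad 1\le k\le n-1,
\]
with the right side equal to $0$ for $k=1$. For $\lambda\notin\{0,-1\}$ this determines $y_{k+1}$ from $y_1,\dots,y_k$. So the eigenspace injects into $\mathbb{R}$ via $y\mapsto y_1$, and since $A(G)$ is symmetric, $\lambda$ is simple. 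This is more direct than extracting simplicity from the diagonalization, where vanishing pivots for $x\notin\{0,1\}$ must be treated separately.
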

	
	Because $0$ and $-1$ are the only eigenvalues that can have multiplicity (arising respectively from twins in isolated runs and from twins in dominating runs — exactly the local eigenvalues of Section~\ref{sec:intro}), they are regarded as \emph{trivial}; all further eigenvalue-location results are stated for the \emph{non-trivial} spectrum.
	
	\begin{theorem}[Ghorbani \cite{Ghorbani2019}]
		\label{thm:eigfree}
		Other than the trivial eigenvalues $-1$ and $0$, a threshold graph has no eigenvalue in the interval
		\[
		\left[\frac{-1-\sqrt2}{2},\ \frac{-1+\sqrt2}{2}\right].
		\]
	\end{theorem}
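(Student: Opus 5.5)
The plan is to reformulate the interval and then track the sign pattern of a congruence diagonalization of $A(G)-\lambda I$. First, the interval $\left[\frac{-1-\sqrt2}{2},\frac{-1+\sqrt2}{2}\right]$ is exactly the set of $\lambda$ with $(\lambda+\tfrac12)^2\le \tfrac12$, i.e.\ $\lambda(\lambda+1)\le \tfrac14$. So the claim is that every non-trivial eigenvalue $\lambda\notin\{0,-1\}$ satisfies $\lambda^2+\lambda>\tfrac14$.

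By Theorem~\ref{thm:eigloc}, such $\lambda$ is simple. As explained in Section~\ref{sec:intro}, the $0$ and $-1$ eigenvalues are exhausted by the twin vectors $e_u-e_v$. Hence a non-trivial eigenvector is orthogonal to all of them and is constant on each twin class. This reduces the question to the quotient matrix on the $2m$ runs, or equivalently to any procedure that only sees the creation sequence.

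For the main argument I would use the Jacobs--Trevisan--Tura diagonalization algorithm for threshold graphs. For a fixed real $\alpha$, it produces a diagonal matrix congruent to $A(G)+\alpha I$ by processing the creation sequence one bit at a time. Each step updates a pair of diagonal values through explicit rational (Möbius-type) maps, of the shape $d\mapsto \alpha - 1/d$ together with the rule that merges a dominating vertex into the previous ones. By Sylvester's law of inertia, $-\alpha$ is an eigenvalue exactly when a zero appears on the final diagonal. Moreover, the number of zeros produced by twin merges accounts precisely for the multiplicities of $0$ and $-1$. So, taking $\alpha=-\lambda$ with $\lambda$ in the interval and $\lambda\ne 0,-1$, it suffices to show that no diagonal entry ever becomes $0$ except through the twin-collapse steps.

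The core step is to find an invariant region $R(\alpha)\subset\mathbb{R}$ with three properties. It must contain the initial value $\alpha$. It must be mapped into itself by both update rules (isolated step and dominating step). And it must avoid $0$ except at the degenerate values $\alpha\in\{0,1\}$. The condition $\alpha(\alpha-1)\le\tfrac14$, i.e.\ $\lambda(\lambda+1)\le\tfrac14$, is exactly the condition under which the fixed-point equation $d=\alpha-1/d$, i.e.\ $d^2-\alpha d+1=0$, relative to the shifted quadratic $d^2+d(\ldots)$ coming from the dominating rule, has no real fixed point crossing zero. So I would first try to take $R(\alpha)$ as a half-line or union of two half-lines bounded by $\alpha$ and $\alpha-1$, for instance $\{d: d\ge \alpha\}\cup\{d:d\le \alpha-1\}$ when $\alpha\in(0,1)$. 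I would then verify closure under each Möbius map by checking its endpoints, using $\alpha(1-\alpha)\ge \tfrac14$ precisely when the image of an endpoint must stay on the correct side. Induction on the length of the creation sequence then finishes the proof.

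I expect the main obstacle to be pinning down the right invariant region and handling the boundary cases. At the endpoints $\lambda=\frac{-1\pm\sqrt2}{2}$ the inequality becomes an equality, and one must check that the orbit never actually reaches $0$ rather than merely accumulating there. If a clean invariant interval fails, the fallback is to do the same induction directly on the $2m\times 2m$ quotient. There the characteristic polynomials satisfy a three-term recurrence in $m$, and one would show by induction that their values at $\lambda$ keep a fixed sign ratio whenever $\lambda^2+\lambda\le\tfrac14$.
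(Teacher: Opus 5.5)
The paper gives no proof of this statement (it only cites Ghorbani), so your sketch has to stand on its own. As written it has a genuine gap: the decisive step, the invariant region, is never produced, and the concrete ingredients you propose are wrong.

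In the Jacobs--Trevisan--Tura diagonalization of $A(G)+\alpha I$, the accumulated value $d$ (initially $\alpha$) is not updated by $d\mapsto\alpha-1/d$; that is the path/tree recurrence. The actual rules are:
\begin{itemize}
\item an isolated vertex finalizes the entry $\alpha+d$ and replaces $d$ by $T_0(d)=\alpha d/(\alpha+d)$;
\item a dominating vertex finalizes $\alpha+d-2$ and replaces $d$ by $T_1(d)=(\alpha d-1)/(\alpha+d-2)$.
\end{itemize}
These are parabolic M\"obius maps, with $1/T_0(d)=1/d+1/\alpha$ and $1/(T_1(d)-1)=1/(d-1)+1/(\alpha-1)$. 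A valid region must therefore avoid the poles $-\alpha$ and $2-\alpha$, not just $0$.

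Several specific claims also fail:
\begin{itemize}
\item Your quadratic $d^2-\alpha d+1$ has real roots iff $|\alpha|\ge2$, which is unrelated to $\alpha(\alpha-1)\le\frac14$.
\item The inequality $\alpha(1-\alpha)\ge\frac14$ you plan to invoke holds only at $\alpha=\frac12$.
\item The suggested region $\{d\ge\alpha\}\cup\{d\le\alpha-1\}$ is not invariant, since $T_0(\alpha)=\alpha/2$ leaves it.
\item The range $\alpha\in(0,1)$ is only the already-known interval $(-1,0)$ of Theorem~\ref{thm:eigloc}, where the correct region is simply $(0,1)$.
\end{itemize}

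The new content of the theorem is $\alpha\in[\frac{1-\sqrt2}{2},0)\cup(1,\frac{1+\sqrt2}{2}]$. There $1/\alpha$ and $1/(\alpha-1)$ have the same sign, so $T_0$ and $T_1$ push points around $\mathbb{R}\cup\{\infty\}$ in the same direction, and orbits really do pass through $\infty$. For example, when $\alpha>1$, $T_1$ maps $(0,2-\alpha)$ onto $(\frac{1}{2-\alpha},\infty)$. No region built from half-lines bounded by $\alpha$ and $\alpha-1$ can be invariant.

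The missing idea is to study the composition $T_1\circ T_0$, which encodes the alternating pattern of $H_n$. Its fixed points solve $2(\alpha-1)d^2+(1-2\alpha)d+\alpha=0$, whose discriminant is $1-4\alpha(\alpha-1)=1-4\lambda(\lambda+1)$. So they are real exactly on the stated closed interval; this, not your quadratic, is where the interval comes from.

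Here is how the region can be built:
\begin{itemize}
\item For $1<\alpha\le\frac{1+\sqrt2}{2}$, let $\beta$ be the smaller fixed point. One checks $\beta\ge\alpha$ and $T_0(\beta)<2-\alpha$. Then $(0,T_0(\beta)]\cup(1,\beta]$ contains $\alpha$ and is mapped into itself by both $T_0$ and $T_1$. On it every finalized entry is nonzero: positive after isolated steps, and after dominating steps positive on $(1,\beta]$ and negative on $(0,T_0(\beta)]$. The final $d$ is positive.
\item For $\frac{1-\sqrt2}{2}\le\alpha<0$, the analogous region is $[T_0(\delta),0)\cup[\delta,1)$, with $\delta$ the larger fixed point; one checks $-\alpha<\delta<1$.
\end{itemize}
The endpoints of the interval are covered because the two fixed points merge there. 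Your sketch names this obstacle but supplies none of the argument, and the fallback via a quotient recurrence is left unspecified. As it stands, the proposal is a strategy rather than a proof.
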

	
	This sharpens an earlier eigenvalue-free interval and matches, in spirit, the analogous eigenvalue-free-interval results later obtained for the distance (Section~\ref{sec:distance}) and eccentricity (Section~\ref{sec:eccentricity}) matrices of threshold graphs — a pattern that recurs across essentially every matrix surveyed in this paper.
	
	\begin{theorem}[Allem--Oliveira--Tura \cite{AllemOliveiraTura}]
		\label{thm:ifreegen}
		Complementing Theorem~\ref{thm:eigfree}'s eigenvalue-\emph{free interval}, an explicit generating procedure is given for threshold graphs whose spectrum avoids a prescribed eigenvalue entirely, producing infinite families of such graphs directly from structural constraints on the creation sequence.
	\end{theorem}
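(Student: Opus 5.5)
The statement is informal: it asserts an explicit generating procedure that, for a prescribed value $\lambda$, outputs infinite families of threshold graphs with $\lambda\notin\spec A(G)$. I would make this concrete through the characteristic polynomial and a sign-tracking recursion along the creation sequence.

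\textbf{Local eigenvalues.} By Theorem~\ref{thm:eigloc} and the twin-class discussion in Section~\ref{sec:intro}, the only eigenvalues with multiplicity are $0$ and $-1$. They come from twin vectors $e_u-e_v$ inside isolated and dominating runs, respectively. If $\lambda=0$ is prescribed, Theorem~\ref{thm:bapat} already gives the criterion: $0$ is avoided exactly when no $0$-bit is preceded by a $0$ and the first bit is not $0$. If $\lambda=-1$ is prescribed, the analogous condition is that every dominating run has length $1$, and the quotient contribution must be checked separately. Any $\lambda\in(-1,0)$ is excluded automatically, and any $\lambda$ in the interval of Theorem~\ref{thm:eigfree} other than $0,-1$ is excluded as well. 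So the real work concerns $\lambda$ outside these ranges.

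\textbf{Quotient recursion.} For a general $\lambda$, the plan is to use the Jacobs--Trevisan diagonalization algorithm for threshold graphs, which this survey attributes to \cite{JacobsTrevisanTura2013}. It processes the creation sequence bit by bit and produces diagonal values $a_1,\dots,a_n$ with $a_i\in\mathbb{Q}(\lambda)$. Each value is obtained from the preceding ones by a fixed rational map, of the form $a\mapsto \lambda - 1/a$ for an isolated step and a modified map for a dominating step. The number of zero diagonal values equals the multiplicity of $\lambda$ in $A(G)$ when the algorithm is run at $-\lambda$. Hence $\lambda\notin\spec A(G)$ if and only if no diagonal value vanishes, after the algorithm's standard handling of zeros.

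\textbf{Generating procedure.} Fix $\lambda$ and consider the finite set of bad states $Z_\lambda$: values of the running parameter from which the next bit forces a zero. The procedure is an automaton on the running value. It starts from the initial value, and at each step it appends only bits whose image avoids $Z_\lambda$. Any word the automaton accepts gives a threshold graph without eigenvalue $\lambda$.

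To get infinite families, I would look for a cycle of states, for example one arising from the pattern $01$ repeated. If the cycle stays away from $Z_\lambda$, then the anti-regular-type words $(01)^k$, together with admissible prefixes, all avoid $\lambda$. That gives infinitely many graphs, and the closure under admissible appendings explains why the families are read off structural constraints on the creation sequence.

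\textbf{Main obstacle.} The hard step is showing that the accepted language is infinite for the $\lambda$ of interest. This requires controlling the orbit of the rational maps, which are Möbius transformations. I would first try to show the maps are contracting toward an attracting fixed point on the relevant interval when $|\lambda|$ exceeds the bound in Theorem~\ref{thm:eigfree}, so that orbits stay bounded away from $Z_\lambda$. If that fails for special algebraic $\lambda$ (for example roots of the characteristic polynomials of small $H_n$), I would instead exhibit explicit periodic words whose orbit misses $Z_\lambda$, checked by finite computation.
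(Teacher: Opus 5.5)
The survey gives no proof of this statement; it only cites \cite{AllemOliveiraTura}. Your sketch therefore has to stand on its own, and as written it has a genuine gap. The step carrying all the content, that the procedure outputs infinitely many $\lambda$-free graphs, is left open, and the recursion it rests on is misdescribed. Row-reducing consecutive vertices $v_k,v_{k+1}$ of a threshold graph at $x=-\lambda$ gives a running value $a$ (initially $a=x$) updated by
\[
a\mapsto \frac{xa}{a+x}\ \ (b_{k+1}=0),\qquad a\mapsto \frac{xa-1}{a+x-2}\ \ (b_{k+1}=1),
\]
with finalized diagonal entry $a+x-2b_{k+1}$. The map $a\mapsto\lambda-1/a$ is the path/tree recursion, not this one.

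More seriously, your bad set $Z_\lambda$ is empty for $\lambda\notin\{0,-1\}$. Subtract row and column $k+1$ from row and column $k$. In the degenerate case $a+x=2b$ the pair block becomes $\begin{pmatrix}0&b-x\\ b-x&x\end{pmatrix}$. For $x\neq b$ this block has negative determinant, so it contributes one positive and one negative value. It then splits off, leaving the principal submatrix of $A(G)+xI$ on $v_{k+2},\dots,v_n$. A zero is forced mid-sequence only when $x=b$, i.e.\ $\lambda\in\{0,-1\}$. Hence for $\lambda\notin\{0,-1\}$ we have $\lambda\in\spec A(G)$ iff the \emph{last} diagonal value vanishes, which is also why such eigenvalues are simple (Theorem~\ref{thm:eigloc}). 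That is a single terminal equation, not a forbidden-state automaton, so your automaton and cycle analysis do not model the problem correctly.

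Your fallback for the main obstacle also fails. Nothing ties the interval of Theorem~\ref{thm:eigfree}, which only says where eigenvalues are absent, to contraction of these M\"obius maps. A ``finite computation'' for special algebraic $\lambda$ cannot cover an arbitrary prescribed $\lambda$ either. In fact the infinitude you treat as hard needs no dynamics at all. $K_n$ (sequence $01^{n-1}$) has spectrum $\{n-1,(-1)^{(n-1)}\}$, so it avoids every $\lambda\neq-1$ for all $n\neq\lambda+1$. The stars $K_{1,s}$ (sequence $0^s1$, spectrum $\{\pm\sqrt s,0^{(s-1)}\}$) avoid $-1$ for $s\ge 2$. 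So the bare existence claim is trivial, and the substance of the cited result must lie in an explicit structural description of the $\lambda$-free creation sequences, which your proposal never produces.

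A smaller error: your $\lambda=-1$ criterion overlooks that when $b_2=1$ the initial vertex and $v_2$ are true twins. The multiplicity of $-1$ is the number of $1$'s preceded by a $1$ plus $[b_2=1]$; for instance $K_n$ has $-1$ with multiplicity $n-1$. So you need all $t_i=1$ \emph{and} $s_1\ge 2$.
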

	
	\begin{theorem}[Jacobs--Trevisan \cite{JacobsTrevisan2014}]
		An $O(n)$-time algorithm computes the characteristic polynomial of a threshold graph directly from its creation sequence, via a recursion on the quotient-matrix structure.
	\end{theorem}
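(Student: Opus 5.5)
The plan is to compute the characteristic polynomial in linear time by Gaussian elimination on $A(G)+xI$, processing vertices in reverse creation order. I will use the diagonalization technique of Jacobs--Trevisan, adapted to the nested neighborhood structure.

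\textbf{Setup.} Order the vertices $v_1,\dots,v_n$ by creation. Vertex $v_j$ with $b_j=1$ is adjacent to every $v_i$ with $i<j$. For $i<j$, we have $v_iv_j\in E$ if and only if $b_j=1$. Thus the entry $(i,j)$ of $A$ for $i<j$ depends only on $b_j$. For a real $x$, let $M=A(G)+xI$. The goal is to produce a diagonal matrix $\operatorname{diag}(d_1,\dots,d_n)$ congruent to $M$, with $d_j$ rational functions of $x$ computable in $O(1)$ each. Then $\det(xI+A)=\prod d_j$, and therefore $\chi(-x)$ up to sign.

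\textbf{Elimination.} Eliminate from the last vertex down to the first. The key structural fact is the following. After eliminating $v_n,\dots,v_{j+1}$ (row/column operations pivoting on the current diagonal entry), the remaining $j\times j$ leading block keeps a very rigid shape. Its off-diagonal entries in the column of $v_k$ ($k\le j$) above the diagonal all equal a common value, and the diagonal entries are $x$ plus a common correction. Concretely, I expect to maintain scalars $(a_j,\beta_j)$ with the following property: the remaining block equals the original block plus $\beta_j J$ restricted appropriately, with current pivot $a_j$.

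Here is why. Eliminating a vertex whose row is constant on all earlier vertices (either all $1$'s or all $0$'s, plus accumulated constants) subtracts a rank-one matrix $\frac{1}{a}uu^T$ with $u$ a constant vector. This adds a constant to every entry of the remaining block. Pivoting on $d_{j}$ then yields
\[
d_{j-1}=x+\beta,\qquad \beta \leftarrow \beta-\frac{(b_j+\beta)^2}{d_j},
\]
with the off-diagonal constant being $b_j+\beta$. Each step is $O(1)$ arithmetic, giving $O(n)$ total.

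\textbf{Main obstacle.} The hard case is when some pivot $d_j=0$. This happens exactly at the eigenvalues with multiplicity, $0$ and $-1$ at twins, by Theorem~\ref{thm:eigloc}. For the algorithm computing $\chi$ symbolically, I would instead carry $d_j$ as ratios of polynomials $p_j/p_{j+1}$, so that $\chi=\prod d_j$ telescopes. This turns the update into a three-term linear recurrence
\[
p_{j-1}=(x+\beta)p_j-\ldots
\]
on polynomials, which never divides. To keep the total cost at $O(n)$ operations, I would collapse each maximal run $0^{s_i}$ or $1^{t_i}$ into one step. Within a run, the factor contributed is explicitly $x^{s_i-1}$ or $(x+1)^{t_i-1}$, up to sign conventions, in line with the twin eigenvectors. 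This leaves a recurrence of length $2m$ on the quotient structure. I expect verifying that this run-collapsed recurrence is exactly the determinant of the equitable quotient matrix to be the delicate step. I would check it by an induction on $m$, expanding along the last class.
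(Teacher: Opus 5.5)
The survey quotes this result without proof, so your plan can only be checked against the statement itself. Your elimination is correct. After pivoting out $v_n,\dots,v_{j+1}$, the leading block is exactly $A(G[v_1,\dots,v_j])+xI+\beta_jJ_j$, where $\beta_j=-\mathbf b^{T}\bigl(A(G[v_{j+1},\dots,v_n])+xI\bigr)^{-1}\mathbf b$ and $\mathbf b=(b_{j+1},\dots,b_n)^{T}$. Your update of $(d_j,\beta_j)$ is also right. But this is an $O(n)$ procedure only for a \emph{fixed numerical} $x$, i.e.\ a congruence diagonalization of the kind used for eigenvalue location.

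\textbf{The gap is the passage to the polynomial.} Once $x$ is an indeterminate, $p_j=\det\bigl(A(G[v_j,\dots,v_n])+xI\bigr)$ has degree $n-j+1$. Each update step therefore costs $\Theta(n-j)$ coefficient operations, not $O(1)$, and the scheme costs $\Theta(n^2)$ in total. Collapsing runs does not restore linearity. For the anti-regular graph $H_n$ every run has length $1$, so there are still $n-1$ steps on polynomials of growing degree. When runs are long, multiplying $(x-b)^k$ into a polynomial of degree about $n$ is not free either. Even organising the steps as a product of $2\times2$ polynomial transfer matrices with fast multiplication only gives $O(n\log^2 n)$. So what you have shown is "$O(n)$ operations in $\mathbb Z[x]$", or $O(n)$ at a fixed $x$. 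Your claim that "each step is $O(1)$" was carried over from the numerical setting to the symbolic one without justification, and nothing in the proposal supplies the missing idea for linear time.

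\textbf{Your recurrence is left unfinished.} As written, ``$p_{j-1}=(x+\beta)p_j-\ldots$'' is not a polynomial recurrence, because $\beta$ is a rational function. You can eliminate it using $d_j=x+\beta_j$: then $d_{j-1}d_j=d_j^2-(d_j-x+b_j)^2=(x-b_j)(2d_j-x+b_j)$, which gives
\[
p_{j-1}=2(x-b_j)\,p_j-(x-b_j)^2\,p_{j+1},\qquad p_{n+1}=1,\quad p_n=x,\quad p_1=\det\bigl(A(G)+xI\bigr).
\]
This recurrence is division-free. On a run $b_j=\dots=b_{j-k+1}=b$, with $c=x-b$, it solves in closed form as $p_{j-k}=c^{k}\bigl((k+1)p_j-kc\,p_{j+1}\bigr)$. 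That is your run collapse, with the twin factors visible, and it needs no comparison with the quotient determinant.

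\textbf{Your ``main obstacle'' is misdiagnosed.} Symbolically no pivot is ever zero, since $d_j=p_j/p_{j+1}$ is a nonzero rational function. Numerically, pivots do not vanish only at the values $0$ and $-1$. In fact $d_j$ vanishes at $x$ whenever $-x$ is an eigenvalue of the trailing subgraph $G[v_j,\dots,v_n]$ but not of $G[v_{j+1},\dots,v_n]$. For example, for $P_3$ with sequence $(\cdot,0,1)$ one gets $d_2=(x^2-1)/x$, which vanishes at $x=-1$ even though $1\notin\spec(A(P_3))$. Theorem~\ref{thm:eigloc} says nothing about pivots.
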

	
	\subsection{Eigenvalue distribution and a resolved conjecture}
	
	\begin{theorem}[Jacobs--Trevisan--Tura \cite{JacobsTrevisanTura2015}]
		Writing $n_-(G)$ and $n_+(G)$ for the number of negative and positive adjacency eigenvalues, almost every threshold graph (under the uniform random model on creation sequences) satisfies $n_-(G) > n_+(G)$; in fact $n_-(G) > n_+(G)$ for \emph{every} connected threshold graph.
	\end{theorem}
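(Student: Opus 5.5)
The plan is to read both counts directly off Bapat's inertia formula (Theorem~\ref{thm:bapat}), written in terms of the run decomposition $b=0^{s_1}1^{t_1}\cdots 0^{s_m}1^{t_m}$ of Section~\ref{sec:prelim}. Connectedness forces the last run to be dominating, so the sequence ends in $1^{t_m}$. I place the initial vertex at the start of the first isolated run, so $s_1\ge 1$ counts it.

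\textbf{Step 1: the three inertia counts.} By Theorem~\ref{thm:bapat}, $n_-(G)$ is the number of $1$'s, namely $n_-(G)=\sum_{i=1}^m t_i$. The zeros that are preceded by a zero, or that start the string, are all zeros of each isolated run except its first one. Hence the nullity is $n_0(G)=\sum_{i=1}^m (s_i-1)$. Subtracting both from $n=\sum_i(s_i+t_i)$ gives
\[
n_+(G)=n-\sum_{i=1}^m t_i-\sum_{i=1}^m (s_i-1)=m .
\]
So the number of positive eigenvalues equals the number of dominating runs.

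I would cross-check this against the twin-class mechanism of Section~\ref{sec:intro}. The twin vectors $e_u-e_v$ produce $\sum_i(t_i-1)$ eigenvalues equal to $-1$ and $\sum_i(s_i-1)$ eigenvalues equal to $0$. The remaining $2m$ eigenvalues come from the $2m\times 2m$ quotient matrix. These are nonzero by the nullity count, and an interlacing or Sylvester-inertia argument on the quotient should show that exactly $m$ of them are positive and $m$ negative. This recovers $n_+=m$ and $n_-=\sum_i(t_i-1)+m=\sum_i t_i$.

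\textbf{Step 2: comparison.} Since every $t_i\ge 1$, we get $n_-(G)-n_+(G)=\sum_{i=1}^m (t_i-1)\ge 0$. Equality holds if and only if every dominating run has length one.

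\textbf{Main obstacle: the strict inequality.} The computation above shows that the inequality in the statement, as written, cannot be strict for every connected threshold graph. For $K_2$ (sequence $01$) the spectrum is $\{1,-1\}$. For $P_3=K_{1,2}$ (sequence $001$) the spectrum is $\{\pm\sqrt2,0\}$. More generally, any sequence in which every dominating run is a single $1$ gives $n_-=n_+$; the anti-regular graph $H_n$ with $n$ even is one example.

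So the provable statement is $n_-(G)\ge n_+(G)$, with equality exactly when all $t_i=1$. Strict inequality holds precisely when some dominating run has length at least $2$, that is, when two consecutive $1$'s appear in the creation sequence. For the ``almost every'' part I would argue under the uniform model on creation sequences: the probability that no two consecutive $1$'s occur in a uniform binary string of length $n-1$ is a Fibonacci-type quantity of order $\varphi^{n}/2^{n}\to 0$. Hence $n_->n_+$ holds asymptotically almost surely, which is the form in which I would state and prove the result.
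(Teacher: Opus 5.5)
Your analysis is correct. There is no proof in the paper to compare it with, since the survey only attributes the statement to \cite{JacobsTrevisanTura2015}. Your reading of Theorem~\ref{thm:bapat} in run notation is right: $n_-(G)=\sum_i t_i$, the nullity is $\sum_i(s_i-1)$, and hence $n_+(G)=m$. It agrees with $K_n$, with $CS(3,2)$ in Table~\ref{tab:integralexamples}, and with the paw $H_4$, whose spectrum is $-1$ together with the roots of $x^3-x^2-3x+1$. It also shows that the second clause of the statement is false as printed. The difference $n_-(G)-n_+(G)=\sum_i(t_i-1)$ vanishes for $K_2$, for every star $K_{1,n-1}$, and for every connected threshold graph whose dominating runs are all singletons. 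So the provable statement is the one you formulate: $n_-\ge n_+$ always, with equality iff $t_1=\cdots=t_m=1$, and strict inequality almost surely.

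The one step to tighten is the passage to the random model. The uniform model on creation sequences includes strings ending in $0$, i.e.\ disconnected graphs, but your Step~1 is derived only for connected graphs. In fact the nullity rule exactly as quoted in Theorem~\ref{thm:bapat} is off by one on such strings: for the code $10$, i.e.\ $K_2\cup K_1$, it returns nullity $0$ and hence $n_+=2$. The fix is one observation:
\begin{itemize}
\item A code ending in $1^{t_m}0^{r}$ describes the connected threshold graph coded by the prefix up to the last $1$, together with $r$ isolated vertices.
\item The all-zero code describes $nK_1$.
\item Isolated vertices only add zero eigenvalues.
\end{itemize}
Hence the criterion ``$n_->n_+$ iff the code contains $11$'' holds for every string, not just connected ones. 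With $F_1=F_2=1$, exactly $F_{n+1}$ binary strings of length $n-1$ avoid $11$, so the exceptional proportion is $F_{n+1}/2^{n-1}\to 0$. Codes are in bijection with unlabelled threshold graphs on $n$ vertices, so this also gives the statement for the uniform measure on isomorphism classes. Your quotient-matrix cross-check in Step~1 is only sketched, but nothing rests on it.
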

	
	\begin{theorem}[Lou--Wang--Huang \cite{Huang2019}]
		The precise distribution of eigenvalues of a threshold graph — how many lie in each of several natural intervals determined by the creation sequence — is characterized completely.
	\end{theorem}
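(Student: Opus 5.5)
The approach is to read off, for each interval endpoint $\alpha$ we care about, the inertia of the shifted matrix $A(G)-\alpha I$ directly from the creation sequence, by Sylvester's law of inertia. Concretely, I take the intervals to be cut at the points $-1,\ 0,\ \frac{-1\pm\sqrt2}{2}$ and, if needed, $-\tfrac12$. For any real $\alpha$, the number of eigenvalues of $A(G)$ below $\alpha$, equal to $\alpha$, and above $\alpha$ coincides with the number of negative, zero and positive diagonal entries in any diagonal matrix congruent to $A(G)-\alpha I$. So the whole statement reduces to producing such a diagonalization and tracking the signs of its entries in terms of the runs $0^{s_1}1^{t_1}\cdots 0^{s_m}1^{t_m}$. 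Counting eigenvalues in an interval $(\alpha,\beta)$ is then a difference of two such counts.

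\textbf{Step 1 (twins).} The local eigenvalues from Section~\ref{sec:intro} come first. Each isolated run of length $s_i$ contributes $0$ with multiplicity $s_i-1$, and each dominating run of length $t_i$ contributes $-1$ with multiplicity $t_i-1$; this is consistent with Theorem~\ref{thm:eigloc}. After removing these, only the $2m\times 2m$ quotient matrix of the equitable partition remains, and all its eigenvalues are simple by Theorem~\ref{thm:eigloc}. So it suffices to locate the eigenvalues of the quotient matrix.

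\textbf{Step 2 (diagonalize the quotient).} I would apply the Jacobs--Trevisan linear-time diagonalization of $A(G)+xI$ along the creation sequence, the same recursion underlying the $O(n)$ characteristic-polynomial algorithm. Processing vertices from the last created to the first gives diagonal values defined by a rational recursion $d\mapsto f_b(d)$, one map for each bit $b\in\{0,1\}$.

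\textbf{Step 3 (signs at each endpoint).} For each chosen $\alpha$, I evaluate the recursion with $x=-\alpha$ and determine the sign pattern of the diagonal values. At $\alpha=0$ this should recover Bapat's inertia (Theorem~\ref{thm:bapat}) and serve as a check. At $\alpha=-1$ the count of eigenvalues below $-1$ should come out as a simple function of the runs, plausibly one per dominating run apart from boundary effects. At $\alpha=\frac{-1\pm\sqrt2}{2}$, Theorem~\ref{thm:eigfree} says no nontrivial eigenvalue lies in between, so the count is forced by the counts at $-1$ and $0$. This leaves the intervals $(-\infty,-1)$, $\left(\frac{-1+\sqrt2}{2},\infty\right)$ and $\left(-1,\frac{-1-\sqrt2}{2}\right)$ as the ones to determine.

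\textbf{Main obstacle.} The hard part is the sign analysis of the recursion at an irrational or non-integer $\alpha$. There the diagonal values are not confined to a finite set, and one must show that they stay in invariant intervals, for instance $d\in(0,\infty)$ versus $d\in(-\infty,c)$. The sign flips then occur exactly at run boundaries, giving counts like ``the number of runs of a given type, corrected at the first and last run''. My first attempt would be an induction on the number of runs $m$, with the invariant that after each complete block $0^{s_i}1^{t_i}$ the current diagonal value lies in a fixed interval independent of $s_i$ and $t_i$. The base case $m=1$ is the complete split graph, whose quotient is an explicit $2\times2$ matrix and can be checked directly.
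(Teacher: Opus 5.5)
The survey states this result without proof (it only cites \cite{Huang2019} and does not even name the intervals), so there is no argument in the paper to compare yours against. Judged on its own, your proposal is a strategy whose decisive step is missing: you never derive a single count. Step~3 says the number of eigenvalues below $-1$ ``should come out'' as ``plausibly one per dominating run apart from boundary effects''. The sign analysis you call the main obstacle is deferred to an induction whose invariant interval you never specify.

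Step~3 also misplaces Ghorbani's endpoints. Since $\frac{-1-\sqrt2}{2}\approx-1.207<-1$, the interval $\left(-1,\frac{-1-\sqrt2}{2}\right)$ on your list is empty, and the interval of Theorem~\ref{thm:eigfree} already contains both trivial eigenvalues. Consequently, once Theorems~\ref{thm:bapat}, \ref{thm:eigloc} and~\ref{thm:eigfree} are taken as given, nothing has to be evaluated at an irrational shift. The only missing datum is the inertia of $A(G)+I$, i.e.\ the shift $\alpha=-1$. So your main obstacle is self-inflicted. If you do not take those theorems as given, nothing in the proposal would prove them. Any finer localization that \cite{Huang2019} may contain is likewise untouched by your plan.

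The missing datum is a short rank computation, not a recursion. Take runs $S_1,T_1,\dots,S_m,T_m$ with the initial vertex in $S_1$, and put $s=\sum s_i$, $t=\sum t_i$, $T=T_1\cup\cdots\cup T_m$. The distinct rows of $A(G)+I$ are $e_u+\mathbf{1}_{T_i\cup\cdots\cup T_m}$ for $u\in S_i$, and $\mathbf{1}_{S_1\cup\cdots\cup S_i}+\mathbf{1}_T$ for the vertices of $T_i$. In a linear dependency, let $\sigma_i$ be the sum of the coefficients of the rows of $T_i,\dots,T_m$. The coordinates on $S_i$ force the coefficient of each $u\in S_i$ to equal $-\sigma_i$. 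The coordinates on $T_k$ then read $\sigma_1-\sum_{i\le k}s_i\sigma_i=0$. For $k=1$ this gives $(1-s_1)\sigma_1=0$, and subtracting consecutive equations gives $s_k\sigma_k=0$ for $k\ge2$.

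Hence the rows are independent when $s_1\ge2$, and they have exactly a one-dimensional space of dependencies when $s_1=1$. The multiplicity of $-1$ is therefore $t-m$, respectively $t-m+1$. Combine this with Bapat ($t$ negative eigenvalues, nullity $s-m$) and the absence of eigenvalues in $(-1,0)$ from Theorem~\ref{thm:eigloc}. You get exactly $m$ positive eigenvalues, and $m$, respectively $m-1$, eigenvalues below $-1$ (in fact below $\frac{-1-\sqrt2}{2}$).

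This also exposes a flaw in your Step~1. When $s_1=1$ the initial vertex is a true twin of $T_1$, so the quotient matrix itself has eigenvalue $-1$. For $K_{t+1}$ the quotient is $\begin{pmatrix}0&t\\1&t-1\end{pmatrix}$, with eigenvalues $t$ and $-1$. Theorem~\ref{thm:eigloc} does not exclude this, and in that case your twin count $\sum_i(t_i-1)$ undercounts the eigenvalue $-1$.
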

	
	A conjecture on the ordering/location of threshold-graph eigenvalues, posed in the Jacobs--Trevisan--Tura line of work, was resolved by Tura:
	
	\begin{theorem}[Tura \cite{Tura2021}]
		The conjecture of \cite{JacobsTrevisanTura2015} on the eigenvalues of threshold graphs holds; moreover the anti-regular graph $H_n$ is identified as the extremal case achieving the smallest positive and largest (in absolute value) non-trivial negative eigenvalue among all threshold graphs on $n$ vertices.
	\end{theorem}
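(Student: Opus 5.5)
The plan is to make the statement precise first and then argue through the Jacobs--Trevisan diagonalization algorithm, which computes Sylvester inertia directly from the creation sequence. Write $\lambda^+(G)$ for the smallest positive eigenvalue of $A(G)$ and $\lambda^-(G)$ for the largest eigenvalue strictly less than $-1$. By Theorem~\ref{thm:eigloc} these are exactly the non-trivial eigenvalues closest to the gap, and by Theorem~\ref{thm:eigfree} they lie outside $\bigl[\frac{-1-\sqrt2}{2},\frac{-1+\sqrt2}{2}\bigr]$. The claim to prove is $\lambda^+(G)\ge\lambda^+(H_n)$ and $\lambda^-(G)\le\lambda^-(H_n)$ for every threshold graph $G$ on $n$ vertices. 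The conjecture of \cite{JacobsTrevisanTura2015} is exactly this extremality statement, so proving it settles the conjecture.

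For fixed real $x$, the inertia of $A(G)-xI$ is obtained by a bottom-up recursion along the creation sequence. Each vertex carries a diagonal value $a_i$, initialised to $-x$. Processing a bit $b_i$ transforms the current diagonal value by an explicit rational map. For a $1$ (dominating vertex) the map is of the form $a\mapsto -x-1-\frac{\dots}{a+1+x}$. For a $0$ (isolated vertex) it is $a\mapsto -x-\frac{1}{a}$ after reduction. The number of positive and negative final values then counts the eigenvalues of $A(G)$ above and below $x$.

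Counting eigenvalues in $(0,x)$ is thus equivalent to counting sign changes produced by these maps. I would then study the two maps $f_0,f_1$ on the relevant range of $x\in(0,\lambda^+(H_n))$. The goal is to show that a run of equal bits never produces a new sign beyond what a single alternation produces. Concretely, I will prove an interlacing-type monotonicity: replacing a run $0^{s}$ by $0$ (or $1^{t}$ by $1$) and inserting the freed vertices as extra alternations can only create additional eigenvalues in $(0,x)$, never destroy them. Since twins contribute only the trivial eigenvalues $0$ and $-1$, the non-trivial spectrum is that of the $2m\times 2m$ quotient matrix. This quotient interlaces the quotient of the graph obtained by splitting one vertex off a run into a new alternating block. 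Iterating such moves transforms any creation sequence into $0101\cdots$, that is, into $H_n$, with $\lambda^+$ weakly decreasing and $\lambda^-$ weakly increasing at each step.

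The main obstacle is this monotonicity step. The diagonal values do not change monotonically in $x$ globally, and splitting a run changes the weights of the quotient in a non-local way. My first attempt would be to track, for $x$ in the interval $(0,\tfrac{-1+\sqrt2}{2}+\epsilon)$, the invariant region for the orbit of $f_0\circ f_1$. Near the fixed points of $f_0\circ f_1$, which are exactly $\frac{-1\pm\sqrt2}{2}$ (explaining Theorem~\ref{thm:eigfree}), the alternating word $0101\cdots$ is the one that approaches the fixed point fastest. Any repeated letter resets the orbit farther from the boundary, which delays the first sign change and therefore pushes $\lambda^+$ up. The negative side is handled symmetrically by the same orbit analysis around $\frac{-1-\sqrt2}{2}$, using the complement/symmetry $x\mapsto -1-x$ suggested by the matching endpoints of the interval in Theorem~\ref{thm:eigfree}.
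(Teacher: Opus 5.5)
\paragraph{A genuine gap: the monotonicity step is not proved.} Computing the inertia of $A-xI$ by elimination along the creation sequence is a reasonable framework. But your proposal stops where the theorem begins: the monotonicity step is the entire content, and nothing in the outline proves it.

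\begin{itemize}
\item \emph{The quotient interlacing has no basis.} The graph $G$ and the graph obtained by splitting a vertex off a run both have order $n$, and neither quotient is a principal submatrix or compression of the other. Relocating one vertex changes its whole row and column. That is a rank-two perturbation, which gives at best interlacing with an index shift. Alternations change the number of positive eigenvalues ($CS(3,2)$ has one, $H_5$ has two), so no inequality between smallest positive eigenvalues follows.
\item \emph{The recursion is misdescribed.} Eliminating the pair $(k,k+1)$ in $A-xI$ gives the pivot $a-x-2b$ and the update $a\mapsto -x-\frac{(x+b)^2}{a-x-2b}$, where $b=b_{k+1}$. So the signs being counted are those of pivots that depend on the next bit, not of orbit values.
\item \emph{The fixed-point picture is misdescribed.} Each letter is a parabolic M\"obius map, with fixed point $0$ for the letter $0$ and $1$ for the letter $1$. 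The numbers $\frac{-1\pm\sqrt2}{2}$ are not fixed points. They are the roots of $4x^2+4x-1$, the parameter values at which $f_1\circ f_0$ becomes parabolic. By Theorem~\ref{thm:eigfree}, $\lambda^+(H_n)>\frac{-1+\sqrt2}{2}$, so the part of the claim not already covered lives entirely in the elliptic regime.
\item \emph{What is missing there.} You need an actual comparison lemma for words in the two maps, for instance a rotation count in which the alternating word is provably fastest. It would have to show that no admissible word of length $n$ produces an extra negative pivot for $x<\lambda^+(H_n)$. Phrases like ``approaches the fixed point fastest'' and ``resets the orbit'' are not inequalities.
\item \emph{The negative side is not symmetric.} $A(\bar{G})=J-I-A(G)$ is a rank-one perturbation of $-I-A(G)$, so $x\mapsto -1-x$ does not carry $\lambda^+(G)$ to $\lambda^-(\bar{G})$. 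The paw has $\lambda^+\approx 0.311$, but its complement $P_3\cup K_1$ has $\lambda^-=-\sqrt2$, not $-1.311$.
\end{itemize}

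\paragraph{The target inequality is false as stated.} Over all threshold graphs, no single $H_n$ is extremal on both sides. Take $n=5$:
\begin{itemize}
\item The connected anti-regular graph ($b_2\cdots b_5=0101$, characteristic polynomial $x(x^4-6x^2-4x+2)$) has $\lambda^+\approx 0.335$ and $\lambda^-\approx -1.271$.
\item Its complement is the paw plus an isolated vertex, with non-trivial factor $x^3-x^2-3x+1$. It has $\lambda^+\approx 0.311$ and $\lambda^-\approx -1.481$.
\end{itemize}
Whichever of these two you call $H_5$, one of your two inequalities fails. You need to restrict to connected threshold graphs, with $H_n$ the connected anti-regular graph; that version does check out for $n\le 5$. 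A correct argument must then use $b_n=1$ somewhere, which your outline never does.
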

	
	\subsection{The number of distinct eigenvalues}
	\label{sec:distincteig-adjacency}
	
	A natural refinement of eigenvalue-location questions is to ask not where eigenvalues can lie, but how few distinct values they can take. Since threshold graphs on $n$ vertices can have diameter and radius bounded independently of $n$, classical bounds relating the number of distinct eigenvalues to the diameter do not by themselves give a constant bound — yet one holds regardless:
	
	\begin{theorem}[Allem--Hoppen--Lazzarin--Sibemberg--Tura \cite{AllemHoppenEtAl2025}]
		\label{thm:atmost4}
		The minimum number of distinct adjacency eigenvalues of a connected threshold graph on $n\ge2$ vertices is at most $4$, for every $n$. That is, $\min_G s(A(G)) \le 4$, where the minimum is taken over all connected threshold graphs on $n$ vertices.
	\end{theorem}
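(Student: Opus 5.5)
The statement only asks for \emph{one} connected threshold graph on $n$ vertices with at most four distinct adjacency eigenvalues, for each $n\ge2$. So my plan is to exhibit an explicit family and compute its spectrum with the twin-class and quotient machinery from Section~\ref{sec:intro}. I will not attempt any classification.

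The first step is the degenerate witness. The creation sequence $1^{n-1}$ (every added vertex dominating) produces $K_n$, which is connected and threshold. Its spectrum is $\{n-1,(-1)^{(n-1)}\}$, so $s(A(K_n))=2\le4$, and this already proves the inequality for every $n\ge2$. This looks almost too easy, so the result in \cite{AllemHoppenEtAl2025} presumably carries a nondegeneracy requirement that the survey's phrasing drops, such as a creation sequence using both $0$'s and $1$'s, or a prescribed number of runs. I would therefore make the argument robust by also treating a nondegenerate family.

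The second step is the complete split graph $CS(s,t)$ with creation sequence $0^{s}1^{t}$ (first vertex included in the isolated run), where $s,t\ge1$ and $s+t=n$. It is connected because $t\ge1$. The $s$ isolated-run vertices are pairwise false twins, and $e_u-e_v$ gives eigenvalue $0$ with multiplicity $s-1$. The $t$ dominating-run vertices are pairwise true twins, and they give eigenvalue $-1$ with multiplicity $t-1$. The two-class partition is equitable with quotient matrix
\[
\begin{pmatrix} 0 & t\\ s & t-1\end{pmatrix},
\]
whose characteristic polynomial is $x^2-(t-1)x-st$. Its discriminant is $(t-1)^2+4st>0$, so it contributes two further eigenvalues, and the multiplicities total $(s-1)+(t-1)+2=n$. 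Hence there are at most $4$ distinct eigenvalues, consistent with the simplicity statement of Theorem~\ref{thm:eigloc}.

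The only real obstacle is interpretive rather than computational: deciding which nondegenerate version of the statement is intended. If the intended claim instead requires threshold graphs with more runs ($m\ge2$), the quotient becomes a $2m\times2m$ matrix and the local eigenvalues remain $0$ and $-1$. One would then need to choose run lengths so that the quotient's nontrivial eigenvalues coincide with one another or with $0$ or $-1$. I would attack that by imposing factorization conditions on the quotient characteristic polynomial computed via the recursion of \cite{JacobsTrevisan2014}. For the statement as written, the two constructions above suffice.
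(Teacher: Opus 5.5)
Your argument is correct for the statement exactly as worded, and your first step alone finishes it. The creation sequence $1^{n-1}$ gives $K_n$, a connected threshold graph with $s(A(K_n))=2$, so the minimum in question is in fact exactly $2$ for every $n\ge2$. Your $CS(s,t)$ computation is also right, though unnecessary: the two-class equitable quotient has characteristic polynomial $x^2-(t-1)x-st$ with positive discriminant, and the multiplicities add up to $(s-1)+(t-1)+2=n$. The survey gives no proof of its own here, only the citation \cite{AllemHoppenEtAl2025}, so there is no competing argument to compare against. Your suspicion that the paraphrase has lost something is well founded.

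Where the proposal goes astray is in its guess at what was lost. As far as I can tell, the cited work is not about $A(G)$ at all. It concerns the inverse-eigenvalue parameter $q(G)$: the minimum number of distinct eigenvalues over all real symmetric matrices whose off-diagonal nonzero pattern is exactly the edge set of $G$, with free diagonal and free nonzero edge weights. It proves $q(G)\le4$ for \emph{every} threshold graph $G$, so the lost ingredient is the quantifier ``for every $G$'', not a nondegeneracy condition. This is also why the survey alludes to diameter bounds: for $q$ one has the classical bound $q(G)\ge\diam(G)+1$. Proving that result requires constructing, for each creation sequence, a weighted matrix with at most four eigenvalues. Neither $K_n$ nor $CS(s,t)$ says anything about it.

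Your fallback plan, tuning run lengths so that adjacency quotient eigenvalues coincide, cannot work either. Translating the count in Theorem~\ref{thm:bapat} into run lengths, $A(G)$ has nullity $\sum_i(s_i-1)$ and exactly $\sum_i t_i$ negative eigenvalues. Hence it has exactly $m$ positive eigenvalues, and these are all simple by Theorem~\ref{thm:eigloc}. So $s(A(G))\ge m+1$, and no threshold graph with $m\ge4$ dominating runs has at most four distinct adjacency eigenvalues. More generally, quotient eigenvalues can coincide neither with $0$ nor with one another except at the value $-1$.
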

	
	This is a striking, order-independent bound: however large $n$ is, some threshold graph on $n$ vertices has at most $4$ distinct adjacency eigenvalues, extending the finite classifications of threshold graphs with \emph{exactly} $2$, $3$, $4$, or $5$ distinct eigenvalues found in the earlier literature (Sections~\ref{sec:adjacency}--\ref{sec:seidel} record several such classification results for other matrices) into a genuine asymptotic statement about the adjacency matrix. 
	
	\subsection{The anti-regular graph as organizing principle}
	
	\begin{theorem}[Aguilar--Ficarra--Schurman--Sullivan \cite{AguilarEtAl2020}]
		The anti-regular graph $H_n$ plays a distinguished structural role in the adjacency spectral analysis of \emph{every} threshold graph: every threshold graph's adjacency spectrum can be understood via comparison with, or construction from, the spectral data of $H_n$. The authors pose the question of whether an analogous distinguished graph exists for the wider class of cographs.
	\end{theorem}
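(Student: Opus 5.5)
Since the statement is informal, I would first make it precise. I would then prove two concrete facts that together express ``understood via comparison with, or construction from, $H_n$'': a \emph{construction} statement (the quotient matrix of any threshold graph is a weighted version of the adjacency matrix of an anti-regular graph) and a \emph{comparison} statement (every threshold graph is an induced subgraph of an anti-regular graph, so interlacing applies). The question about cographs I would treat as a posed problem, not something to prove.

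\textbf{Construction.} Take a connected threshold graph $G$ with run decomposition $0^{s_1}1^{t_1}\cdots 0^{s_m}1^{t_m}$ as in Section~\ref{sec:prelim}. Collapsing each maximal run to a single bit gives the alternating string $0101\cdots01$, which is the creation sequence of $H_{2m}$. So the $2m$ twin classes are in bijection with the vertices of $H_{2m}$, and two classes are adjacent exactly when the corresponding vertices of $H_{2m}$ are.
\begin{enumerate}
\item I would check that the quotient matrix of the equitable partition described in Section~\ref{sec:intro} equals $A(H_{2m})\,\mathrm{diag}(s_1,t_1,\dots,s_m,t_m) + \mathrm{diag}(0,t_1-1,0,t_2-1,\dots,0,t_m-1)$. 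The second term accounts for edges inside dominating runs.
\item The twin eigenvectors $e_u-e_v$ then supply the local eigenvalues $0$ and $-1$, with total multiplicity $n-2m$.
\item The remaining $2m$ eigenvalues are those of this weighted anti-regular matrix. This is consistent with Theorem~\ref{thm:eigloc} and Theorem~\ref{thm:bapat}.
\end{enumerate}
So every threshold spectrum is obtained from the spectral problem of $H_{2m}$ by diagonal reweighting. Taking all weights equal to $1$ recovers $H_{2m}$ itself.

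\textbf{Comparison.} Deleting a vertex from a threshold graph deletes the corresponding bit from the creation sequence. This works because a retained later dominating vertex is still adjacent to all retained earlier vertices, and a retained later isolated vertex is still adjacent to none of them. Any binary string of length $n-1$ is a subsequence of the alternating string of length $2n-2$, so $G$ is an induced subgraph of $H_{2n-1}$. Cauchy interlacing then gives $\lambda_i(H_{2n-1})\ge\lambda_i(G)\ge\lambda_{i+n-1}(H_{2n-1})$. Combined with the known spectral extremality of $H_n$ (Tura's theorem above), this bounds the non-trivial eigenvalues of every threshold graph by those of a single anti-regular graph.

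\textbf{Main obstacle.} The hard step is not the proofs, which are short, but choosing formulations faithful to the informal claim. Within the proofs, the most error-prone part is the bookkeeping in the quotient formula: getting the diagonal correction $t_i-1$ right, and checking that the quotient is similar to a symmetric matrix via $\mathrm{diag}(\sqrt{s_1},\sqrt{t_1},\dots)$, so that its eigenvalues are real and interlacing-type arguments transfer to it.
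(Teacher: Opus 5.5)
The survey gives no proof of this statement; it is an informal paraphrase of the cited paper. So there is no argument in the text to compare yours with, and the question is whether your two formalizations are sound. They are.
\begin{itemize}
\item Your quotient formula $A(H_{2m})\,\mathrm{diag}(s_1,t_1,\dots,s_m,t_m)+\mathrm{diag}(0,t_1-1,\dots,0,t_m-1)$ is correct. Classes $S_i$ and $T_j$ are completely joined exactly when $j\ge i$, any two $T$-classes are completely joined, and no two $S$-classes are.
\item The twin vectors account for exactly $n-2m$ eigenvalues.
\item Conjugating by $\mathrm{diag}(\sqrt{s_1},\sqrt{t_1},\dots)$ symmetrizes the quotient.
\item Bit deletion, the greedy embedding of a length-$(n-1)$ string into $(01)^{n-1}$, and your interlacing indices are all right.
\end{itemize}
Two minor points. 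First, you treat only connected $G$; trailing isolated runs just add zeros. Second, the construction ties $G$ to the matrix $A(H_{2m})$, not to its spectrum, because the spectrum of $W^{1/2}A(H_{2m})W^{1/2}+C$ is not a function of $\spec A(H_{2m})$.

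The sentence that does not hold up is the last one of your comparison part. The graph $H_{2n-1}$ has $n-1$ positive eigenvalues, a simple $0$, and $n-1$ negative eigenvalues, so embedding $G$ into it controls only the outer eigenvalues of $G$:
\begin{itemize}
\item For the smallest positive eigenvalue $\lambda_m(G)$, your lower bound $\lambda_{m+n-1}(H_{2n-1})$ is $\le 0$.
\item Your upper bounds on the non-trivial negative eigenvalues are $\ge 0$.
\end{itemize}
So nothing is said near $0$ and $-1$, which is exactly where Tura's extremality result and the eigenvalue-free intervals live. Citing Tura's theorem does not combine with these inequalities to give anything new.

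The informative comparison runs the other way. Choosing one vertex from each twin class induces $H_{2m}$ inside $G$, so $\lambda_i(G)\ge\lambda_i(H_{2m})\ge\lambda_{i+n-2m}(G)$.
\begin{itemize}
\item By Bapat's inertia formula, $G$ has exactly $m$ positive eigenvalues, as does $H_{2m}$.
\item $G$ has at most $m$ negative eigenvalues different from $-1$, and by the Jacobs--Trevisan--Tura gap they all lie below $-1$, so they are among its $m$ smallest eigenvalues.
\end{itemize}
Hence $\lambda_i(G)\ge\lambda_i(H_{2m})>0$ for $i\le m$, and $\lambda_{n-m+j}(G)\le\lambda_{m+j}(H_{2m})<0$ for $1\le j\le m$. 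In words, every non-trivial eigenvalue of $G$ lies farther from $\{-1,0\}$ than the corresponding eigenvalue of the anti-regular graph on its $2m$ twin classes. That is a genuine comparison with the spectral data of an anti-regular graph, and it reduces extremal questions such as Tura's to comparing anti-regular graphs of different orders.
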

	
	A related tridiagonal-reduction technique, applicable to the adjacency (and other) quotient matrices of threshold graphs and several related graph classes, is developed by Andelić, Du, da Fonseca, and Simić \cite{AndelicEtAl2020}.
	
	\subsection{Random threshold graphs}
	
	Recent work has also considered threshold graphs built by a random creation sequence (e.g.\ a P\'olya-urn-style process), deriving the exact degree distribution of an arbitrary vertex (including mean and variance) and an asymptotic distribution of the adjacency and Laplacian eigenvalues as $n\to\infty$, together with an explicit closed-form description of the Laplacian eigenbasis in this random setting. This complements the deterministic, creation-sequence-based results above with a probabilistic large-$n$ picture.

	\section{The Laplacian spectrum}
	\label{sec:laplacian}
	
	The Laplacian spectrum of threshold graphs is exceptionally clean: it is always \emph{integral}, and is given exactly by the conjugate of the degree sequence.
	
	\begin{theorem}[Merris \cite{Merris1994}]
		\label{thm:merris}
		Let $G$ be a threshold graph with degree sequence $d_1\ge\cdots\ge d_n$ and conjugate partition $d^*=(d_1^*,\ldots,d_n^*)$. Then $\spec(L(G)) = \{d_1^*,\ldots,d_n^*\}$ exactly (as a multiset). Equivalently, threshold graphs are exactly the graphs for which the Grone--Merris majorization inequality $\spec(L(G)) \preceq d^*$ holds with equality.
	\end{theorem}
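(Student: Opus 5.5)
We argue by induction on $n$ along the creation sequence, using the complement identity together with the disjoint-union and join behaviour of Laplacian spectra. For the forward direction (threshold $\Rightarrow$ $\spec(L(G))=d^*$), suppose $G$ is threshold on $n$ vertices with last vertex $v$. There are two cases.

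\emph{Case 1: $v$ is isolated.} Then $G=G'\cup K_1$, so $\spec L(G)=\spec L(G')\cup\{0\}$. The degree sequence of $G$ is that of $G'$ with a $0$ appended. Since a zero entry contributes to no $d_i^*$ with $i\ge1$, the conjugate of $G$'s degree sequence is that of $G'$, padded with one more zero to length $n$. The claim follows from the induction hypothesis for $G'$.

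\emph{Case 2: $v$ is dominating.} Then $\overline{G}=\overline{G'}\cup K_1$, and $\overline{G'}$ is again threshold, since complementation swaps the bits of the creation sequence. We use the standard fact that for any graph $H$ on $n$ vertices the Laplacian eigenvalues of $\overline H$ are $0$ and $n-\mu$ for the nonzero-index eigenvalues $\mu\neq$ (the trivial $0$) of $L(H)$; this follows from $L(\overline H)=nI-J-L(H)$ and the common eigenvector $\mathbf 1$. Hence the Laplacian spectrum of $G$ consists of $n$ (from $v$) together with $\{\mu_i+1\}$, where $\mu_i$ ranges over the eigenvalues of $L(G')$ except one zero, with that zero kept. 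Equivalently, by induction, $\spec L(G)=\{n\}\cup\{(d')^*_i+1: i\le n-2\}\cup\{0\}$.

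It remains to check the combinatorial identity that appending a dominating vertex transforms the conjugate partition in exactly this way. Adding $v$ raises every old degree by $1$ and inserts a new degree $n-1$. Then $d_1^*=n$, because every vertex has degree $\ge1$. For $i\ge2$, the vertices with degree at least $i$ are $v$ together with the old vertices having $d'\ge i-1$, so $d_i^*=1+(d')^*_{i-1}$, with $d_n^*=0$. This matches the spectrum above after reindexing. I expect this index bookkeeping (the shift by one, and matching the dropped zero eigenvalue to the final entry $d_n^*=0$) to be the main place where care is needed. The base case $K_1$ is trivial.

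For the converse "equivalently" clause, we take the Grone--Merris--Bai inequality $\spec(L(G))\preceq d^*$ as known, as the paper does in Section~\ref{sec:prelim}. We show that equality forces $G$ to be threshold. The plan is to use Grone--Merris as stated: equality at the first partial sum means $\lambda_1=d_1^*$, and summing the tail identities forces each partial-sum inequality to be tight. The first thing to try is induction via an isolated or dominating vertex. If $\lambda_1=n$ then $\overline G$ is disconnected; if $0$ has large multiplicity then $G$ is disconnected. From tightness we extract a vertex of degree $0$ or $n-1$, remove it, and apply induction, using the fact that tightness is inherited by $G-v$ through the same join/union spectral formulas.

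Alternatively, for the converse we may use the characterization that threshold graphs are exactly the graphs whose degree sequence satisfies $\sum_{i\le k} d_i=\sum_{i\le k}(d_i^*-1)+k(k-1)$-type equalities (equivalently $d^*$-maximality in the majorization order, stated in the introduction). If the Laplacian spectrum equals $d^*$, then $\sum\lambda_i^2$ equals $\sum (d_i^*)^2$. Comparing this with $\Tr L^2=\sum d_i^2+\sum d_i$ gives the Ferrers-diagram equality $\sum d_i^2+\sum d_i=\sum (d_i^*)^2$. This equality characterizes threshold degree sequences, namely those whose Ferrers diagram is "shifted-symmetric". Among the two routes to the converse, I expect this second one, rigidity of the trace identity, to be the cleanest.
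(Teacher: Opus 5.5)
The paper states this theorem without proof (it is quoted from Merris), so your argument is judged on its own. Your forward direction is correct and complete. The join formula $\spec L(G'\vee K_1)=\{0,n\}\cup\{\mu_i+1: i\le n-2\}$ agrees with the conjugate update $d^*=(n,\,1+(d')^*_1,\dots,1+(d')^*_{n-2},\,0)$, and you correctly match the discarded zero eigenvalue of $L(G')$ with $(d')^*_{n-1}=0$. This is the standard induction.

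The converse (equality forces $G$ to be threshold) is not actually proved: both routes assert the key step without justification.

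\emph{Route 1.} The sentence ``from tightness we extract a vertex of degree $0$ or $n-1$'' is the entire content, and the facts you cite do not give it. $\lambda_1=n$ only makes $\overline G$ disconnected, and a repeated $0$ only makes $G$ disconnected; neither produces an isolated or dominating vertex. The remark about summing tail identities is also vacuous, since the hypothesis is already equality of multisets. The missing idea is short:
\begin{enumerate}
\item $d_1^*$ is the number of non-isolated vertices, so if $G$ has no isolated vertex then $n=d_1^*\in\spec L(G)$, and $\overline G$ is disconnected.
\item The property $\spec L=d^*$ is invariant under complementation, because $L(\overline G)=nI-J-L(G)$ and the conjugate of $\overline G$'s degree sequence is $(n-d^*_{n-1},\dots,n-d^*_1,0)$.
\item So if $\overline G$ also had no isolated vertex, $G$ would be disconnected as well, which is impossible for $n\ge2$.
\end{enumerate}
Hence $G$ has an isolated or a dominating vertex. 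Deleting it preserves the equality, by the same union/join formulas and conjugate updates you already computed (cancel $\{0\}$, resp.\ $\{0,n\}$, from both multisets), and induction finishes.

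\emph{Route 2.} The claim that $\sum d_i^2+\sum d_i=\sum(d_i^*)^2$ characterizes threshold sequences is again the whole difficulty, stated without proof. Your displayed characterization is also garbled; the correct one is $d_i^*=d_i+1$ for $i\le f=|\{i:d_i\ge i\}|$. The claim can be proved as follows.
\begin{enumerate}
\item A box-counting computation on the Ferrers diagram gives $\sum(d_i^*)^2-\sum d_i^2-\sum d_i=\sum_{i\le f}\bigl[(d_i^*-i)^2-(d_i-i+1)^2\bigr]$.
\item Erd\H{o}s--Gallai in H\"asselbarth's form, $\sum_{i\le k}(d_i^*-d_i)\ge k$ for $k\le f$, says the decreasing vector $(d_i-i+1)_{i\le f}$ is weakly submajorized by the decreasing nonnegative vector $(d_i^*-i)_{i\le f}$.
\item Since $x\mapsto x^2$ is strictly convex and increasing on $[0,\infty)$, equality of the sums of squares forces the two vectors to coincide, i.e.\ $d_i^*=d_i+1$ for $i\le f$.
\item You then still need that a threshold degree sequence has the threshold graph as its only realization.
\end{enumerate}
As written, neither route establishes the ``equivalently'' clause.
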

	
	This single theorem is the foundation for the entire Laplacian (and, via the master identity of Section~\ref{sec:distance-laplacian}, distance Laplacian) theory of threshold graphs, and is a special case of the later Grone--Merris--Bai theorem for general graphs.
	
	\begin{theorem}[Hammer et al. \cite{Kelmans1996}]
		The Laplacian spectrum of a threshold graph determines, and is determined by, the number of spanning trees and a range of further structural invariants, computed explicitly from the creation sequence.
	\end{theorem}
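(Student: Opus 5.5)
We read the statement as two linked claims: (i) for a threshold graph $G$ the Laplacian spectrum and the creation sequence determine each other, and (ii) the spanning-tree count $\tau(G)$, together with invariants such as the number of edges, the degree sequence and connectivity, can be written in closed form in terms of that data. The plan is to route everything through Theorem~\ref{thm:merris}, and we will not try to prove that $\tau(G)$ \emph{alone} determines the spectrum. The literal reverse reading ``determined by the number of spanning trees'' is false in general, since non-isomorphic threshold graphs on the same $n$ can share $\tau$, so we interpret ``determined by'' as referring to the full collection of invariants.

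\textbf{Step 1 (creation sequence $\Rightarrow$ spectrum).} Read the degrees directly off the run decomposition $0^{s_1}1^{t_1}\cdots0^{s_m}1^{t_m}$. A vertex in the $j$-th dominating run is adjacent to every earlier vertex, to its own run (true twins), and to every vertex of later dominating runs. A vertex in the $j$-th isolated run is adjacent exactly to the later dominating vertices. This gives $d$ as an explicit function of $(s_i,t_i)$. Theorem~\ref{thm:merris} then gives $\spec(L(G))=d^*$, which is explicit as well.

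\textbf{Step 2 (spectrum $\Rightarrow$ creation sequence).} Since conjugation of partitions is an involution, $d=(d^*)^*$, so the Laplacian spectrum recovers the degree sequence. It remains to show that a threshold degree sequence has a unique labelled realization up to isomorphism. We argue inductively by peeling: the last vertex added is either isolated ($d_n=0$) or dominating ($d_1=n-1$), and exactly one of these occurs when $n\ge2$ and $G$ has an edge. We delete that vertex, adjust the degree sequence, and recurse. This reconstructs the creation sequence, so the spectrum determines $G$. This is the step we expect to be the main obstacle. We must check that the peeled sequence is again the degree sequence of a threshold graph, which holds because induced subgraphs of $\{P_4,C_4,2K_2\}$-free graphs are again such, and that the choice at each stage is forced.

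\textbf{Step 3 (invariants).} By the matrix-tree theorem, $\tau(G)=\frac1n\prod_{\mu\neq0}\mu$. Substituting the eigenvalues $d_i^*$ from Step~1 gives $\tau(G)$ as an explicit product over the runs, with factors raised to powers $s_i$ and $t_i$. The number of edges is $\tfrac12\sum_i d_i^*$, the number of components is the multiplicity of $0$ (equal to $1$ exactly when $b_n=1$), and the algebraic connectivity is $d_{n-1}^*$, the size of the last dominating block. Each of these is read off the same explicit formula, which completes the plan.
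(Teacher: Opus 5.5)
Your plan is correct. The paper gives no proof of this cited result; the only justification it offers is the Table~\ref{tab:cospectral} entry saying that the Laplacian determines threshold graphs because, by Theorem~\ref{thm:merris}, spectrum and degree sequence determine each other. That is exactly your Steps~1--2, and your peeling induction supplies the missing fact that a threshold graph is determined by its degree sequence.

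There are two small slips in Step~3. First, $\tau(G)=\frac1n\prod_{\mu\ne0}\mu$ holds only for connected $G$; in general take the product of the $n-1$ largest eigenvalues, which vanishes when $G$ is disconnected. Second, $d^*_{n-1}$ (the number of vertices of degree $n-1$) equals $t_m$ except for $K_n$ (where $m=1$, $s_1=1$), whose algebraic connectivity is $n$ rather than $t_1=n-1$.
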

	
	A corresponding explicit eigenbasis (not just the eigenvalue multiset) for the Laplacian of a threshold graph is constructed in the Laplacian-eigenbasis literature building on \cite{Merris1998}, giving explicit eigenvectors for every conjugate-partition eigenvalue, not merely the local twin-difference vectors.
	
	\subsection{Weighted threshold graphs}
	
	\begin{theorem}[Andeli\'c--Stani\'c \cite{AndelicStanic2025}]
		\label{thm:weighted}
		For a \emph{weighted} threshold graph (edge/vertex weights respecting the threshold structure), the Laplacian spectrum again equals the conjugate partition of an associated \emph{weighted Ferrers diagram}, giving a closed formula for the Laplacian spectrum in the weighted setting — a direct and complete generalization of Theorem~\ref{thm:merris}.
	\end{theorem}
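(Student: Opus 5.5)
The plan is to reduce the weighted statement to the unweighted Merris theorem (Theorem~\ref{thm:merris}) by the same two mechanisms used there: twin-difference eigenvectors and the nested-neighbourhood (Ferrers) structure. I read ``weights respecting the threshold structure'' concretely. Each vertex $v$, added at step $k$, carries a positive weight $w_v$. When $v$ is added as a dominating vertex, every edge $uv$ to an earlier vertex $u$ gets weight $c_{uv}=w_v$ (or a weight depending only on the later endpoint). The weighted Laplacian is $L_w=\Deg_w-A_w$, where $\Deg_w(u)=\sum_v c_{uv}$. The weighted Ferrers diagram is then the diagram whose $i$-th row has length $d_i$, the number of neighbours of the vertex of $i$-th largest degree, with each cell $(i,j)$ assigned the weight of the edge it encodes. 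Its \emph{weighted conjugate} has as $j$-th entry the sum of the weights in column $j$. The claim to prove is that $\spec(L_w)$ equals this multiset of column sums, together with zeros.

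The proof goes by induction on $n$ along the creation sequence, mirroring the proof of Merris's theorem. For an isolated step, $G'=G\cup K_1$, so $\spec(L_w(G'))=\spec(L_w(G))\cup\{0\}$. On the diagram side, appending a row of length $0$ adds a zero column sum, so both sides change in the same way.

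For a dominating step, the new vertex $v$ of weight $w$ is joined to all $n$ old vertices with edge weight $w$. Then
\[
L_w(G') = \begin{pmatrix} L_w(G)+wI & -w\mathbf 1\\ -w\mathbf 1^{T} & nw\end{pmatrix}.
\]
Since $L_w(G)\mathbf 1=0$, every eigenvector $x\perp\mathbf 1$ of $L_w(G)$ with eigenvalue $\mu$ extends by $0$ at $v$ to an eigenvector of $L_w(G')$ with eigenvalue $\mu+w$. The remaining two eigenvalues come from the span of $\mathbf 1_{G}$ and $e_v$: they are $0$ and $(n+1)w$, the latter with eigenvector $(\mathbf 1_G,-n)$. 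So the spectrum transforms as $\{0,\mu_2,\dots,\mu_n\}\mapsto\{0,\mu_2+w,\dots,\mu_n+w,(n+1)w\}$.

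On the Ferrers side, adding a dominating vertex adds a new first row of length $n$ and extends every old row by one cell, each new cell carrying weight $w$. One then has to check that this adds $w$ to each of the $n-1$ old nonzero-or-zero column sums, in the right bookkeeping, and creates a new column with sum $(n+1)w$. The unweighted identity ``$d^*$ gains $1$ in each of the first $n$ entries and a new entry $n+1$'' becomes this weighted statement once cell weights are attached.

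I expect the main obstacle to be this last bookkeeping step: fixing the definition of the weighted Ferrers diagram so that the combinatorial update matches $\mu\mapsto\mu+w$ exactly. Part of this is deciding whether the new weight-$w$ cells sit in a new column or are spread across the existing columns. I would fix the convention by requiring that it reproduce Theorem~\ref{thm:merris} when all $w_v=1$, and check it first on $CS(s,t)$.

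The other delicate point is the non-constant weighting within a run of twins. In that case twins $u,u'$ still give eigenvectors $e_u-e_{u'}$ only if their edge weights to the rest of the graph agree. So I would restrict to the hypothesis that edge weights are determined by the later-added endpoint. This is exactly what makes the block formula above valid at each induction step.
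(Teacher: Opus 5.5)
The paper only cites this result from \cite{AndelicStanic2025} and gives no proof, so your proposal has to stand on its own. The spectral half does. For edge weights fixed by the later-added endpoint, your block form of $L_w(G')$ is right, and so is the update $\{0,\mu_2,\dots,\mu_n\}\mapsto\{0,\mu_2+w,\dots,\mu_n+w,(n+1)w\}$. You never need twin vectors. In fact, under this model two consecutive dominating vertices with different weights are \emph{not} weighted twins, so the justification in your last paragraph is off, though harmless.

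\textbf{The gap} is the combinatorial half, which is the actual content of the theorem: you never fix which cell of a row carries which edge weight, and that choice decides the answer. Your plan to fix it by demanding agreement with Theorem~\ref{thm:merris} at unit weights cannot work, because with all weights equal to $1$ every ordering gives the same column sums. The natural reading of ``extends every old row by one cell'' (appending at the end) is false. Take $CS(2,2)$ with dominating weights $a$ then $b$. Its true spectrum is $\{0,\,a+b,\,3a+b,\,4b\}$. Listing each row's weights in creation order of the neighbour gives column sums $\{3a+b,\,a+3b,\,2b,\,0\}$ instead.

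The convention that works is the opposite one: in each row, list the weights with the most recently created neighbour first. Then at a dominating step the new weight-$w$ cell is \emph{prepended} to every old row, and the update goes as follows.
\begin{itemize}
\item The new first column has sum $nw+w=(n+1)w$.
\item Old column $j$ moves to column $j+1$. It picks up a $w$ from the new row of length $n$ exactly when $j\le n-1$.
\item Old column $n$, which is always empty (the designated $0$), stays $0$.
\item An isolated step only adds an empty row and a trailing zero column.
\end{itemize}
This matches your spectral update term by term and closes the induction. It also gives the closed form $\lambda_k=d_w(v_k)+[b_k=1]\,w_k$ for every vertex $v_k$ added after the first, together with one eigenvalue $0$.

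Without fixing this convention and checking the update, your argument shows only that the spectrum satisfies a recursion, not that it equals a weighted Ferrers conjugate.
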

	
	This 2025 result closes a natural generalization question that had been open since the unweighted case was settled by Merris three decades earlier.
	
	\subsection{Laplacian eigenvectors}
	
	Beyond the eigenvalue multiset, recent work gives an explicit combinatorial description of a full Laplacian eigenbasis for threshold graphs directly from the creation sequence, refining the classical eigenvector theory of Merris \cite{Merris1998} to the threshold case specifically and relating the eigenvector supports to the run structure of the creation sequence.
	
	\section{The signless Laplacian spectrum and main eigenvalues}
	\label{sec:signless}
	
	The signless Laplacian $Q(G) = \Deg(G)+A(G)$ of a threshold graph does \emph{not} enjoy an analogue of Theorem~\ref{thm:merris} — its spectrum is generically irrational — but a remarkable \emph{interlacing} property holds instead.
	
	\begin{theorem}[Helmberg--Porto--Torres--Trevisan \cite{HelmbergEtAl2023}]
		\label{thm:interlacing}
		For a threshold graph $G$ with degree sequence $d_1\ge d_2\ge \cdots \ge d_n$, the eigenvalues $q_1 \ge q_2 \ge \cdots \ge q_n$ of $Q(G)$ interlace the degree sequence:
		\[
		q_i \ge d_i \ge q_{i+1}, \qquad i=1,\ldots,n-1.
		\]
	\end{theorem}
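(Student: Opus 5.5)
The plan is to argue by induction along the creation sequence. The point is that both operations act on $Q(G)$ by a bordering step whose effect on the spectrum can be tracked exactly. Let $G$ have $k$ vertices, degrees $d_1\ge\cdots\ge d_k$ and $Q$-eigenvalues $q_1\ge\cdots\ge q_k$, and assume the interlacing of Theorem~\ref{thm:interlacing} holds for $G$.

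\textbf{Isolated vertex.} Adding an isolated vertex gives $Q(G+K_1)=Q(G)\oplus[0]$. Both the spectrum and the degree sequence gain a $0$, so interlacing is preserved trivially.

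\textbf{Dominating vertex.} Adding a dominating vertex $v$ gives
\[
Q(G\vee K_1)=\begin{pmatrix} Q(G)+I & \mathbf 1\\ \mathbf 1^{T} & k\end{pmatrix}.
\]
The new degree sequence is $(k,\,d_1+1,\ldots,d_k+1)$, and $k$ is its largest entry. Write $\mu_j=q_j+1$ and $q'_1\ge\cdots\ge q'_{k+1}$ for the new eigenvalues. Cauchy interlacing gives $q'_j\ge \mu_j\ge q'_{j+1}$, and the Rayleigh quotient at $e_v$ gives $q'_1\ge k$.

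This is not enough on its own. The required inequalities are $q'_1\ge k\ge q'_2$ and $q'_{i+1}\ge d_i+1\ge q'_{i+2}$. Cauchy only yields $q'_{i+1}\ge q_{i+1}+1$, which is weaker than $q'_{i+1}\ge d_i+1$ since $q_{i+1}\le d_i$.

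To get the sharper bound I will use the secular equation for the bordered matrix. Expand $\mathbf 1=\sum_j c_j u_j$ in an orthonormal eigenbasis of $Q(G)$. The new eigenvalues not equal to some $\mu_j$ are the roots of
\[
f(\lambda)=\lambda-k-\sum_j \frac{c_j^2}{\lambda-\mu_j}.
\]
Eigenvalues $\mu_j$ with $c_j=0$ persist unchanged. Each remaining root lies strictly in a gap between consecutive distinct $\mu_j$'s, or above $\mu_1$.

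Hence the task becomes a sign condition on $f$ at the shifted degree values $d_i+1$: whenever the inductive hypothesis puts $d_i+1$ inside a gap $(\mu_{i+1},\mu_i)$, the root of $f$ in that gap must lie at or above $d_i+1$. To compute these signs I will use the twin/equitable-partition structure from Section~\ref{sec:intro}.

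\textbf{Local eigenvalues.} For twins $u,w$ in an isolated run, $Q(e_u-e_w)=d_u(e_u-e_w)$. For twins in a dominating run, the eigenvalue is $d_u-1$. So in each of the $2m$ classes, all but one of the vertices contribute an eigenvalue equal to the class degree or one less. These local eigenvalues satisfy the interlacing automatically, because the degree is repeated with the same multiplicity.

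\textbf{Quotient eigenvalues.} The remaining eigenvalues are those of the $2m\times 2m$ quotient matrix $B$ of the run partition. The claim reduces to showing that $B$ has exactly one eigenvalue in each interval between consecutive distinct class degrees, with the correct endpoint behaviour. I would prove this by evaluating $\det(xI-B)$ at the distinct degree values. Because the neighbourhoods are nested, $B$ has an almost-triangular "staircase" structure. I expect this determinant to factor recursively along the runs, in the same way as the $O(n)$ characteristic-polynomial recursion of Jacobs--Trevisan, and to alternate in sign.

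\textbf{Main obstacle.} The hardest step is this sign alternation (equivalently, the sign of $f(d_i+1)$). I would first attempt a Sturm-type recursion on the characteristic polynomials $p_m(x)$ of the quotient matrices as runs are appended. Each new run is a rank-two bordering, so I expect a three-term relation between consecutive $p$'s, and from it I would show that $p_m$ alternates in sign across the degree values.
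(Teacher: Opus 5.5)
The survey states this result without proof, and as written the inequality $q_i\ge d_i$ is false. So your plan cannot be completed. The precise failure is your claim that the local eigenvalues ``satisfy the interlacing automatically''. You correctly computed that twins in a dominating run give the eigenvalue $d_u-1$. But the inequalities $q_i\ge d_i\ge q_{i+1}$, applied across a block of equal degrees $d_j=d_{j+1}=\cdots=\delta$, force the intermediate eigenvalues to equal $\delta$ itself, not $\delta-1$. This fails already for $K_3$, which is threshold: $Q(K_3)=I+J$ has spectrum $(4,1,1)$ and $d=(2,2,2)$, so $q_2=1<2=d_2$. More generally, every complete split graph $CS(s,t)$ with $t\ge2$ has $q_2=s+t-2=d_2-1$. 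The survey's own example after Proposition~\ref{prop:Qirrational} (creation sequence $(0,1,0,1)$) has $d=(4,3,2,2,1)$ but $q_2\approx 2.711<3$.

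Your other reductions break on the same small examples.

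\textbf{Isolated-vertex step.} This step is not trivial. After appending a $0$ you need the new inequality $q_k\ge d_k$ at index $k$, which is not part of the hypothesis. Since $q_k=\lambda_{\min}(Q(G))\le\min_v Q_{vv}=d_k$, it forces $q_k=d_k$. It already fails for $K_2\to K_2\cup K_1$, where $q=(2,0,0)$ and $d=(1,1,0)$. Every induction along a creation sequence passes through such disconnected graphs, so the inductive hypothesis cannot simply be the interlacing statement.

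\textbf{Dominating step.} The sign condition fails at the first instance $K_2\to K_3$, even though $K_2$ satisfies the hypothesis. Here $\mathbf 1$ lies in the top eigenspace of $Q(K_2)$, so $f(\lambda)=\lambda-2-2/(\lambda-3)$. Since $f(2)=2>0$, the root below $\mu_1=3$ is $1<d_1+1=2$.

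\textbf{Quotient step.} The claim fails for $CS(2,2)=K_4-e$. Here $B=\begin{pmatrix}2&2\\2&4\end{pmatrix}$, and $\det(xI-B)=x^2-6x+4$ equals $-4$ and $-5$ at the class degrees $2$ and $3$. Both eigenvalues $3\pm\sqrt5$ lie outside $[2,3]$, so there is no sign alternation for a Sturm-type recursion to establish.

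In all of these examples the upper inequalities $d_i\ge q_{i+1}$ do hold. The lower ones hold only after a shift by one, such as $q_i\ge d_i-1$, which is what the $\delta-1$ twin eigenvalues of dominating runs suggest. You need to recover the exact statement from the cited source before attempting a proof. For a corrected statement your tools are of the right kind: bordering, the secular equation, or more robustly the inertia of $Q(G)-xI$ at the degree values tracked along the creation sequence. However, the inductive invariant must be chosen so that it survives appending an isolated vertex.
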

	
	\begin{theorem}[Helmberg--Porto--Torres--Trevisan \cite{HelmbergEtAl2023}]
		As a consequence of Theorem~\ref{thm:interlacing}, the \emph{signless Brouwer conjecture} — that the sum of the $k$ largest signless Laplacian eigenvalues is at most $|E(G)| + \binom{k+1}{2}$ — holds for every threshold graph.
	\end{theorem}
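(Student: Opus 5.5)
The plan is to derive the signless Brouwer bound $\sum_{i=1}^k q_i \le |E(G)| + \binom{k+1}{2}$ from the interlacing of Theorem~\ref{thm:interlacing}, the trace identity $\sum_{i=1}^n q_i = \Tr Q(G) = \sum_{i=1}^n d_i = 2|E(G)|$, and an elementary edge-counting estimate. Write $e=|E(G)|$.

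\textbf{The case $k=n$.} Here the left-hand side is $2e$. Since $e \le \binom{n}{2} \le \binom{n+1}{2}$, we get $2e \le e+\binom{n+1}{2}$, so this case is immediate.

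\textbf{Step 1: reduction to degrees, for $1\le k\le n-1$.} Theorem~\ref{thm:interlacing} gives $q_i \ge d_i$ for $k+1\le i\le n-1$. Also $q_n \ge 0$, because $Q(G)$ is positive semidefinite. Hence
\[
\sum_{i=k+1}^{n} q_i \;\ge\; \sum_{i=k+1}^{n-1} d_i .
\]
Combining this with the trace identity yields
\[
\sum_{i=1}^k q_i \;=\; 2e-\sum_{i=k+1}^{n} q_i \;\le\; \sum_{i=1}^{k} d_i + d_n .
\]
So it suffices to prove the purely combinatorial inequality $\sum_{i=1}^k d_i + d_n \le e + \binom{k+1}{2}$.

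\textbf{Step 2: edge counting.} Let $S$ be the set of the $k$ vertices of largest degree and $\bar S = V\setminus S$. Counting edges gives
\[
\sum_{i=1}^k d_i = 2e(S) + e(S,\bar S) = e + e(S) - e(\bar S),
\]
where $e(X)$ is the number of edges inside $X$ and $e(S,\bar S)$ the number between $S$ and $\bar S$. It therefore remains to show
\[
e(S) + d_n - e(\bar S) \le \binom{k}{2} + k .
\]
Since $e(S)\le\binom{k}{2}$, it is enough to show $d_n - e(\bar S) \le k$.

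\textbf{Step 3: the minimum-degree term.} If $d_n \le k$ there is nothing to prove. Otherwise, take a vertex $v$ of minimum degree $d_n$; we may choose $v\in\bar S$, since $k<n$ and $S$ consists of the top $k$ degrees. At most $k$ of the neighbours of $v$ lie in $S$, so $v$ has at least $d_n-k$ neighbours in $\bar S$. Thus $e(\bar S)\ge d_n-k$, which is exactly what is needed.

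\textbf{Main obstacle.} The only delicate point is the leftover term $d_n$ in Step 1. Interlacing controls $q_i$ only for $i\le n-1$, so the smallest eigenvalue has to be handled by positive semidefiniteness instead. A naive approach, bounding $q_1$ separately via $q_{i+1}\le d_i$, also leaves an extra term, and that term is harder to control. The trace-plus-complement route isolates the error as $d_n$, and Step 3 absorbs it into the slack $k$ of $\binom{k+1}{2}=\binom{k}{2}+k$ using edges inside $\bar S$. Note that the threshold hypothesis enters only through Theorem~\ref{thm:interlacing}; the rest of the argument holds for any graph.
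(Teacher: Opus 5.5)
Your deduction from the printed inequality is valid, and the edge counting in Steps~2--3 is correct for every graph. The gap is in the spectral input. The only property of threshold graphs you use is the lower half $q_i\ge d_i$ of Theorem~\ref{thm:interlacing}, and as printed that inequality is false.
\begin{itemize}
\item For $K_n$ with $n\ge 3$ one has $Q=(n-2)I+J$, so $q_2=n-2<n-1=d_2$.
\item For $K_4-e=CS(2,2)$, the two twin classes give local $Q$-eigenvalues $2,2$. The quotient matrix, with rows $(2,2)$ and $(2,4)$, gives $3\pm\sqrt5$. So $q=(3+\sqrt5,\,2,\,2,\,3-\sqrt5)$ against $d=(3,3,2,2)$.
\item The survey's own example after Proposition~\ref{prop:Qirrational} also has $q_2<d_2$. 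It has creation sequence $(0,1,0,1)$, degrees $(4,3,2,2,1)$ and $q\approx(5.779,\,2.711,\,2,\,1,\,0.511)$.
\end{itemize}
The structural reason is that a run of true twins contributes the local $Q$-eigenvalue $d-1$, not $d$.

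More importantly, the conclusion of Step~1 fails, not merely its justification. For $K_4-e$ and $k=1$ it asserts $q_1\le d_1+d_4=5$, whereas $q_1=3+\sqrt5>5$. Likewise, in the $(0,1,0,1)$ example $q_1\approx 5.779>5=d_1+d_5$. So no alternative proof of Step~1 can save the argument: the reduction $\sum_{i\le k}q_i\le\sum_{i\le k}d_i+d_n$ is too strong for threshold graphs.

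Whatever \cite{HelmbergEtAl2023} actually proves, it cannot be the inequality as printed. To establish the statement you need a correct comparison between the $Q$-eigenvalues and the degrees of a threshold graph, or some other genuinely spectral argument. Your edge count remains a good finishing step. If you can prove, say, $\sum_{i\le k}q_i\le\sum_{i\le k+1}d_i$ for threshold graphs, take $S$ to be the $k+1$ vertices of largest degree. Then $\sum_{i\le k+1}d_i=e+e(S)-e(\bar S)\le e+\binom{k+1}{2}$ directly, and Step~3 is not needed.
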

	
	This is one of the few infinite graph classes for which Brouwer's conjecture (in either its ordinary-Laplacian or signless-Laplacian form) is fully resolved, and it illustrates how the rigidity of the threshold structure can settle spectral inequalities that remain open in general.
	
	\subsection{Main eigenvalues}
	
	An eigenvalue of a symmetric matrix $M$ associated with $G$ is called main if its eigenspace is not orthogonal to the all-ones vector $\mathbf{1}$.
	
	\begin{theorem}[Vinagre--Trevisan--Bolckau--Chimelli \cite{VinagreEtAl2020}]
		\label{thm:mainQ}
		Threshold graphs with exactly $k$ main $Q$-eigenvalues, for every $k\ge2$, are characterized explicitly, together with a recursive construction producing all threshold graphs with $k+1$ main $Q$-eigenvalues from those with $k$.
	\end{theorem}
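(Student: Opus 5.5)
We plan to work entirely through the equitable partition of the vertex set into the $2m$ twin classes $V_1,W_1,\dots,V_m,W_m$, where $V_i$ is the $i$-th isolated run (size $s_i$) and $W_i$ the $i$-th dominating run (size $t_i$). First, twin-difference vectors $e_u-e_v$ with $u,v$ in the same class are eigenvectors of $Q(G)$ orthogonal to $\mathbf{1}$. For $u,v\in V_i$ (false twins) the eigenvalue is the common degree $d(u)$. For $u,v\in W_i$ (true twins) it is $d(u)-1$. Hence none of these local eigenvalues can be main, unless its value coincides with a quotient eigenvalue, and that coincidence has to be tracked. The remaining eigenvectors are constant on classes and come from the $2m\times 2m$ quotient matrix $B$ of $Q(G)$. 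We will use the standard fact that the main eigenvalues of $Q(G)$ are exactly the eigenvalues of $B$ that are main for $B$ with respect to the weight vector of class sizes. Equivalently, the number $k$ of main $Q$-eigenvalues equals the dimension of the $Q$-cyclic (Krylov) space generated by $\mathbf{1}$:
\[
k=\dim\operatorname{span}\{\mathbf{1},Q\mathbf{1},Q^2\mathbf{1},\dots\}.
\]

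Second, we will compute this Krylov space explicitly. Note that $Q\mathbf{1}=2\mathbf{d}$, where $\mathbf{d}$ is the degree vector. The degree vector is constant on classes and, by nesting, takes strictly ordered values on the runs. We then iterate using the recursive structure of the creation sequence. Appending a run $0^{s}$ or $1^{t}$ changes $Q$ by a rank-controlled update, so we can express the Krylov space of the new graph in terms of that of the old one. This gives the characterization: $k$ equals the number of distinct entries of a certain class-indexed sequence built from $(s_i,t_i)$, namely the rank of a small Vandermonde-like system in the degree values. It is governed by when distinct classes collapse because their $Q$-orbit vectors agree.

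Third, for the recursive construction we will show that adding one appropriate run (or adjusting the last run length) raises the Krylov dimension by exactly one. Conversely, we will show that every graph with $k+1$ main eigenvalues arises this way from one with $k$, by deleting the last run and checking that the dimension drops by exactly one.

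The main obstacle is this exact $\pm1$ control of the Krylov dimension under the run operations, since accidental coincidences between a quotient eigenvalue and a local eigenvalue $d(u)$ or $d(u)-1$ could make an eigenspace mix. Our first attempt at handling this is to work directly with the cyclic space of $\mathbf{1}$ rather than with eigenvalues. The cyclic-space description counts main eigenvalues correctly regardless of multiplicities, so the coincidences never need to be resolved separately.
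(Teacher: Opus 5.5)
The survey only cites this result and contains no proof of it, so there is nothing to compare against. Judged on its own, your set-up is fine: the twin vectors, the $Q$-invariant space of class-constant vectors, and $k=\dim\operatorname{span}\{Q^j\mathbf 1\}$. The gap is that the plan stops exactly where the theorem's content begins.

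You never state the characterization, and the forms you hint at fail on a five-vertex example. Take $(s_1,t_1,s_2,t_2)=(2,1,1,1)$: vertices $a,b$, then a dominating $c$, an isolated $e$, and a dominating $f$. The four classes have the distinct degrees $2,3,1,4$. The rows $(1,4,22,126)$, $(1,6,34,194)$, $(1,2,10,58)$, $(1,8,48,280)$ of $[\mathbf 1,B\mathbf 1,B^2\mathbf 1,B^3\mathbf 1]$ are pairwise distinct. Yet $k=3$: the minimal polynomial of $\mathbf 1$ is $\lambda^3-9\lambda^2+20\lambda-8$. The simple eigenvalue $1$ is non-main, with eigenvector $(-1,-1,1,1,0)$ summing to $0$. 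So neither a count of distinct entries nor a Vandermonde system in the degrees computes $k$.

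The converse half of your recursion is also false. Deleting the last run need not lower $k$: $P_3\cup K_1$ (runs $0^2 1\,0$) and $P_3$ both have exactly the main $Q$-eigenvalues $0$ and $3$. Changing the length of the last run never changes $k$ at all, so that move cannot raise it.

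The missing ingredient is the exact effect of appending one run. It is a jump in $\{0,+1\}$ governed by a degeneracy condition, not an automatic $+1$.

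\begin{itemize}
\item \emph{Isolated run.} Here $Q(H\cup\overline{K_r})=Q(H)\oplus 0$, so $k$ rises by one unless $0$ is already a main $Q$-eigenvalue of $H$. For a connected threshold $H$ on at least two vertices, this happens exactly when $H=K_{1,p}$ with $p\ge 2$. That is what happens at the third run of the example above.
\item \emph{Dominating run.} Put $n'=|V(H)|$ and let $\Gamma_H(x)=\mathbf 1^{T}(xI-Q(H))^{-1}\mathbf 1$, a rational function whose poles are the main eigenvalues of $H$. Each new clique vertex satisfies $(Qx)_f=(n-2)x_f+\mathbf 1^{T}x$, so $n-2$ is never main. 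Testing $Q(H\vee K_t)$ on vectors $(y,\mathbf 1_t)$ shows that its main eigenvalues are $\mu+t$, where $\mu$ runs over the $k(H)+1$ roots of $t(\Gamma_H(\mu)+1)=\mu-(n'-2)$. A root $\mu=n'-2$, if present, must be discarded. Hence $k$ rises by one unless $\Gamma_H(n'-2)=-1$, which happens for $H=K_1$ and yields $K_n$.
\end{itemize}

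Both conditions are independent of the length of the new run. Any explicit characterization, and any recursive construction of all graphs with $k+1$ main eigenvalues, has to decide when these two degeneracies occur along the creation sequence. Passing to the cyclic space of $\mathbf 1$ does not remove that question.

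The coincidences you do worry about are harmless. Since $\mathbf 1$ lies in the $Q$-invariant space of class-constant vectors, the main eigenvalues are always quotient eigenvalues, whatever the multiplicities.
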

	
	This characterization is extended to the broader class of \emph{quasi-threshold graphs} (a superclass within the cographs):
	
	\begin{theorem}[Jones--Trevisan \cite{JonesTrevisan2024}; Jones--Trevisan--Vinagre \cite{JonesTrevisanVinagre2025}]
		The structural characterization of Theorem~\ref{thm:mainQ} is generalized to connected quasi-threshold graphs with $k\ge2$ main $Q$-eigenvalues, via a \emph{condensed signless Laplacian} matrix of reduced size obtained from the same equitable-partition reduction used for ordinary threshold graphs; the case $k=2$ is resolved completely, and partial results are given for $k=3$, including for several subclasses such as trees, unicyclic, and bicyclic quasi-threshold graphs.
	\end{theorem}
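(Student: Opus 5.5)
We extend the approach of Theorem~\ref{thm:mainQ} by replacing the twin-class partition of a threshold graph with the natural equitable partition of a connected quasi-threshold graph $G$. We use the rooted-forest description: $G$ is the comparability graph of a rooted tree $T$, so each vertex is adjacent exactly to its ancestors and descendants.

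\textbf{Step 1: build the condensed matrix.} Group vertices into twin classes, i.e.\ maximal sets of vertices of $T$ with identical closed or open neighbourhoods. Collapsing each chain of true twins and each family of leaf siblings gives a condensed rooted tree $\widehat T$. The partition of $V(G)$ into these classes is equitable for $Q(G)=\Deg(G)+A(G)$. Indeed, a vertex in class $C$ has, in class $C'$, either $0$, $|C'|$, or $|C|-1$ neighbours (the last when $C'=C$ is a true-twin class), and this count depends only on the pair $(C,C')$. The quotient $\widehat Q$ is the condensed signless Laplacian.

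\textbf{Step 2: main eigenvalues come from the quotient.} Every twin-difference vector $e_u-e_v$ is an eigenvector orthogonal to $\mathbf 1$. Hence all main eigenvalues of $Q$ are eigenvalues of $\widehat Q$. More precisely, the number of main $Q$-eigenvalues equals the dimension of the Krylov space $\langle \mathbf 1, \widehat Q\mathbf 1,\widehat Q^2\mathbf 1,\dots\rangle$, computed in $\widehat Q$ with the size-weighted inner product. Thus $k$ is the rank of the main-walk matrix $W=[\mathbf 1\mid \widehat Q\mathbf 1\mid\cdots]$. We will express $\widehat Q\mathbf 1=(2d_C)_C$ through class degrees, and $\widehat Q^2\mathbf 1$ through degrees and sums of neighbour degrees.

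\textbf{Step 3: the case $k=2$.} Here $k=2$ means $\widehat Q d=\alpha d+\beta\mathbf 1$ for the degree vector $d$, that is, $d_v^2+\sum_{u\sim v}d_u=\alpha d_v+\beta$ for all $v$. We will evaluate this identity along the levels of $\widehat T$. Start at a leaf class, then move to its parent, and continue upward. Each level yields a linear relation between class sizes and depths, which pins down $\widehat T$. We expect to find that either $G$ is regular, giving a disjoint union of cliques and hence $K_n$ in the connected case, or $\widehat T$ is a spider or star-like two-level structure whose sizes satisfy explicit Diophantine relations. We then check that each family listed does satisfy $\widehat Q d\in\langle d,\mathbf 1\rangle$.

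\textbf{Step 4: partial $k=3$ results, and the main obstacle.} For $k=3$ the analogous condition is $\widehat Q^2 d\in\langle \mathbf 1,d,\widehat Qd\rangle$. This involves second-neighbourhood degree sums, and the level-by-level elimination no longer closes in general. This is where we expect the real difficulty. Our plan is to restrict to cases where $\widehat T$ is tiny, namely when $G$ is a tree, unicyclic, or bicyclic. The cyclomatic number bounds the number of true-twin classes of size $\ge 2$ and the depth of $\widehat T$, so only finitely many shapes of $\widehat T$ occur, with free size parameters. For each shape we compute $\det$ of the $4\times4$ main-walk minor symbolically and solve the resulting polynomial conditions in the size parameters. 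The recursive construction from $k$ to $k+1$ should then follow by adjoining a dominating vertex, which corresponds to a new root of $\widehat T$, and tracking how this rank-one update $\widehat Q\mapsto \widehat Q+\mathbf 1\mathbf 1^{\!\top}$-type changes the Krylov dimension.
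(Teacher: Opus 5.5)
The survey does not prove this statement; it only attributes it to the two cited papers. A proof therefore has to actually produce the $k=2$ characterization and the $k=3$ results, and that is exactly what your proposal defers.

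Steps~1--2 are correct and standard: the twin partition is equitable for $Q$, $k=\dim\langle\mathbf 1,\widehat Q\mathbf 1,\widehat Q^2\mathbf 1,\dots\rangle$, $Q\mathbf 1=2d$, and $(Qd)_v=d_v^2+\sum_{u\sim v}d_u$. Step~3, however, is only announced (``we expect to find''), and the mechanism you describe would not deliver it.

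First, a regular graph has exactly one main $Q$-eigenvalue, so $K_n$ belongs to $k=1$, not $k=2$. The $k=2$ condition is $\widehat Qd\in\langle d,\mathbf 1\rangle$ \emph{and} $d\notin\langle\mathbf 1\rangle$.

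Second, the row conditions are neither linear nor rigid. In $K_r\vee(K_{a_1}\cup\cdots\cup K_{a_p})$, a vertex of $K_{a_i}$ has $d=r+a_i-1$ and $d^2+\sum_{u\sim v}d_u=2d^2-rd+r(n-1)$. So the non-root rows only say that every pendant degree is a root of one fixed quadratic: at most two distinct clique sizes, with free multiplicities. The value of $r$ and the multiplicities enter only through the single root row; for example, $K_r\vee(K_2\cup K_1)$ has $k=2$ for every $r\ge1$. One level deeper the rows do bite. For the threshold graph $0^{s_1}1^{t_1}0^{s_2}1^{t_2}$ with $s_1\ge2$, once the $S_1$- and $S_2$-rows fix $\alpha,\beta$, the $T_1$-row reduces to $(s_1-1)(t_1-1)=0$. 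The root row then reduces to $s_2(s_1+s_2-2)=0$, so $k=2$ never occurs. Deciding which condensed trees survive in general needs a genuine elimination argument, and that is the missing content.

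Step~4 has further problems.

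\emph{The recursion.} Adjoining a dominating vertex to a connected quasi-threshold graph creates a true twin of its universal vertices, so $\widehat T$ gets no new root. A new level arises only from $H\mapsto K_r\vee H$ with $H$ disconnected. There the principal block of $Q(K_r\vee H)$ on $V(H)$ is $Q(H)+rI$, bordered by all-ones blocks; this is not a rank-one update of $\widehat Q$. The useful fact is that $\mathcal K(Q(H),\mathbf 1_H)\oplus\langle e_R\rangle$ (with $e_R$ the indicator of the new root class) is $Q(K_r\vee H)$-invariant and contains $\mathbf 1$. Hence $k$ rises by \emph{at most} one. It does rise for $\overline{K_s}\mapsto K_{1,s}$ ($s\ge2$), but not for $K_2\cup K_1\mapsto$ the paw. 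Since $k$ is not additive over components (coincident main eigenvalues merge), no $k\to k+1$ recursion follows without more work.

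\emph{The subclasses.} The cases you plan to handle by symbolic $4\times4$ minors are degenerate. The comparability graph of a rooted tree has cyclomatic number $\sum_{v\ne\text{root}}(\operatorname{depth}(v)-1)$. So a quasi-threshold tree is a star ($k\le2$). A unicyclic one is $K_1\vee(K_2\cup(n-3)K_1)$, with at most three classes and $k=3$ iff $n\ge5$. The bicyclic ones are $K_1\vee(K_{1,2}\cup(n-4)K_1)$ and $K_1\vee(2K_2\cup(n-5)K_1)$.

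As written, the proposal is an outline of a research program rather than a proof of any of the claimed results.
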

	
	An analogous question for main eigenvalues of the adjacency matrix has also been studied for the related class of chain graphs (the bipartite analogue of threshold graphs), extending the main-eigenvalue program beyond the signless Laplacian setting.

	\section{Graph energies}
	\label{sec:laplacian-energy}
	
	The energy of a graph matrix is the sum of the absolute values of its eigenvalues (for the Laplacian, deviations from the average degree are used instead). Threshold graphs play a distinguished extremal role in this theory.
	
	\subsection{Adjacency energy}
	
	\begin{theorem}[Jacobs--Trevisan--Tura \cite{JacobsTrevisanTura2015}]
		Explicit bounds on the (adjacency) energy of threshold graphs are given, and hyperenergetic threshold graphs (energy exceeding $n$) are exhibited for every $n>8$. Sequences of pairwise non-cospectral threshold graphs, all equienergetic to a common reference graph, are also constructed.
	\end{theorem}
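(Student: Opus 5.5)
The plan is to treat the three assertions separately, using the inertia and eigenvalue-location results of Section~\ref{sec:adjacency}. The basic identity is that $\Tr A(G)=0$, so the energy satisfies
\[
E(G)=\sum_i|\lambda_i| = 2\sum_{\lambda_i>0}\lambda_i = 2\sum_{\lambda_i<0}|\lambda_i| .
\]

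\textbf{Lower bound.} By Theorem~\ref{thm:eigloc}, $A(G)$ has no eigenvalue in $(-1,0)$, so every negative eigenvalue is $\le -1$. Combined with Theorem~\ref{thm:bapat}, which says the number $n_-(G)$ of negative eigenvalues equals the number of $1$'s in the creation sequence, this gives
\[
E(G)\ \ge\ 2\,n_-(G)=2\cdot\#\{i: b_i=1\}.
\]
Theorem~\ref{thm:eigfree} sharpens this. Any negative eigenvalue other than $-1$ is at most $(-1-\sqrt2)/2$. The negative eigenvalues other than $-1$ are exactly the non-trivial negative eigenvalues of the $2m\times 2m$ quotient matrix, and there are $m$ of them (one per dominating run, by the twin count: each dominating run of length $t_i$ contributes $t_i-1$ copies of $-1$). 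So the bound can be improved by roughly $m(\sqrt2-1)$.

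\textbf{Upper bound.} I would apply Cauchy--Schwarz to the positive part:
\[
E(G)=2\sum_{\lambda>0}\lambda \le 2\sqrt{n_+\sum\lambda^2}\le 2\sqrt{n_+\,|E(G)|}.
\]
Here $n_+ = n-n_- - \eta$, where both $n_-$ and the nullity $\eta$ are read off from the bit string by Theorem~\ref{thm:bapat}, and $\sum_i\lambda_i^2=2|E(G)|$. This gives an explicit bound in terms of the creation sequence.

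\textbf{Hyperenergetic graphs for every $n>8$.} The statement only asks for $E>n$, so the lower bound suffices once the creation sequence contains more than $n/2$ ones. I would take, for each $n>8$, a connected creation sequence $0^{s}1^{t}$ with $t>n/2$ and $s\ge2$, so the graph is non-complete. Then $E\ge 2t>n$. Strictness is even automatic, because the single non-trivial negative eigenvalue lies below $(-1-\sqrt2)/2<-1$. If non-trivial alternation is wanted, say to match the paper's $n>8$ cutoff, I would instead use sequences such as $0\,1^{t_1}0\,1^{t_2}$. The quotient eigenvalues then add at least $m(\sqrt2-1)$, and a small-case check determines the first $n$ at which $E>n$ holds within the chosen family. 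Locating the exact cutoff $n>8$ for the specific family in \cite{JacobsTrevisanTura2015} is the only place where a genuine computation with the quotient characteristic polynomial is needed. I expect this to be the main obstacle, and I would handle it with the $O(n)$ characteristic-polynomial recursion of Jacobs--Trevisan.

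\textbf{Equienergetic sequences.} Fix a reference threshold graph $R$ with creation sequence $b$. Let $G_k$ have creation sequence $b\,0^{k}$, which is $R$ together with $k$ isolated vertices and is again threshold. Its spectrum is $\spec A(R)$ together with $k$ extra zeros, so $E(G_k)=E(R)$ for all $k$. The graphs $G_k$ are pairwise non-cospectral since their orders differ.

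For same-order families I would try instead to vary the twin-run lengths while keeping the multiset of absolute values of the non-trivial quotient eigenvalues fixed. This means matching the sum of the $1$'s contribution, which is $t_i-1$ copies of $-1$ per dominating run, against a compensating change in the quotient. I would carry this out first for $m=1$, where the quotient is $2\times2$ and the energy is explicit: $E = 2(t-1) + \sqrt{(t-1)^2+4st}+\,\ldots$ in closed form. The goal there is to solve for distinct $(s,t)$ pairs with equal energy.
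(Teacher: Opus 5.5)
The survey only cites this result, so I judge your argument on its own. The genuine gap is the equienergetic part. Padding a reference graph with isolated vertices uses nothing about threshold graphs (it works for every graph). It also produces disconnected graphs of different orders. That meets the statement only under a degenerate reading: it falls outside the standing connectedness assumption of Section~\ref{sec:prelim}, and it is exactly the trivial case that the usual notion of equienergetic graphs (non-cospectral, of the \emph{same order}, with equal energy) is meant to exclude.

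Your same-order fallback cannot succeed with $m=1$. The energy of $CS(s,t)$ is $(t-1)+\sqrt{(t-1)^2+4st}$, not $2(t-1)+\cdots$. Along $s+t=n$ this is strictly increasing in $t$ (its $t$-derivative is positive on $[1,n-1]$), rising from $2\sqrt{n-1}$ at the star to $2n-2$ at $K_n$. So no two complete split graphs of the same order are equienergetic. A genuine construction must go beyond $m=1$ and control the sum of the positive quotient eigenvalues exactly: these are the only positive eigenvalues, and $E(G)$ is twice their sum. That step is missing.

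On the remaining parts, the lower bound $E(G)\ge 2n_-(G)$ from Theorems~\ref{thm:bapat} and~\ref{thm:eigloc} is correct. Your Cauchy--Schwarz bound, however, drops a factor $2$. Since $\sum_i\lambda_i^2=2|E(G)|$, one gets $E(G)\le 2\sqrt{2n_+|E(G)|}$, while the displayed $2\sqrt{n_+|E(G)|}$ already fails for $K_3$. Using $\lambda^2\ge 1$ for the negative eigenvalues, you can sharpen it to $2\sqrt{n_+(2|E(G)|-n_-)}$, which is attained by $K_n$.

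The quotient has $m$ non-trivial negative eigenvalues only when $s_1\ge 2$. If $s_1=1$, the initial vertex is a true twin of the vertices of $T_1$, so $-1$ is a quotient eigenvalue; for $K_n$ the quotient $\bigl(\begin{smallmatrix}0&n-1\\1&n-2\end{smallmatrix}\bigr)$ has eigenvalues $n-1$ and $-1$.

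Finally, with hyperenergetic read literally as $E>n$, your family works, but the claim is then vacuous: $E(K_n)=2n-2>n$ for all $n\ge 3$, so there is no ``$n>8$'' cutoff to locate. The cutoff only makes sense for the standard notion $E(G)>2n-2=E(K_n)$, and no bound of your type can certify that. Write $s=\sum_i s_i$, $t=\sum_i t_i=n_-$, and let $k\le s-1$ be the number of non-trivial negative eigenvalues. Your sharpened bound is then $2t+(\sqrt2-1)k\le 2t+2(s-1)=2n-2$. For the standard notion you would need genuinely quantitative lower bounds on the quotient eigenvalues of a specific family.
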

	
	\subsection{Laplacian energy and Brouwer-type extremality}
	
	The Laplacian energy $LE(G) = \sum_i |\mu_i - 2m/n|$ (where $\mu_i$ are the Laplacian eigenvalues and $2m/n$ the average degree) has a long history of extremal results placing threshold graphs at the center.
	
	\begin{theorem}[Helmberg--Trevisan \cite{HelmbergTrevisan2017}]
		\label{thm:spectralthresholddominance}
		A graph $G$ is \emph{spectrally threshold dominated} if, for every $k$, some threshold graph of the same order and size majorizes $G$'s top-$k$ Laplacian eigenvalue sum. Every spectrally threshold dominated graph admits a threshold graph of the same order and size with at least as large Laplacian energy. Split graphs are shown to be spectrally threshold dominated, as are disjoint unions and complements of spectrally threshold dominated graphs. The authors conjecture that \emph{every} graph is spectrally threshold dominated, which (if true) would show that threshold graphs maximize Laplacian energy among \emph{all} graphs of given order and size, and that universal spectral threshold dominance is equivalent to Brouwer's conjecture for Laplacian eigenvalue sums.
	\end{theorem}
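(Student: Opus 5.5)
The statement bundles three claims, and I would prove the first two and only reduce the third. The claims are: (i) spectral threshold dominance implies that some threshold graph has at least as large Laplacian energy; (ii) split graphs, disjoint unions, and complements of spectrally threshold dominated graphs are spectrally threshold dominated; (iii) universal dominance is equivalent to Brouwer's conjecture. Throughout, let $S_k(G)$ denote the sum of the $k$ largest Laplacian eigenvalues.

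\textbf{Step (i).} I would use the Ky Fan–type representation
\[
LE(G)=2\max_k\Bigl(S_k(G)-\tfrac{2ek}{n}\Bigr),
\]
which holds because the eigenvalues sum to $2e$, where $e=|E(G)|$. A single dominating threshold graph $H$ with $S_k(H)\ge S_k(G)$ for all $k$ (order and size fixed, so the average degree agrees) then gives $LE(H)\ge LE(G)$. The definition as stated says ``some threshold graph for every $k$''. I would read it as a single $H$ working for all $k$, namely a majorizing threshold graph of the same order and size. If the quantifier order is really ``for every $k$ some $H_k$'', I would apply the formula at the maximizing index $k^*$ for $G$ and take $H_{k^*}$; this still gives
\[
LE(H_{k^*})\ge 2\bigl(S_{k^*}(H_{k^*})-2ek^*/n\bigr)\ge LE(G),
\]
so both readings work.

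\textbf{Step (ii), closure properties.}
\begin{itemize}
\item \emph{Complements.} Use $\mu_i(\bar G)=n-\mu_{n-i}(G)$ on the nonzero part, which gives $S_k(\bar G)=kn-(2e-S_{n-1-k}(G))$. Combine this with the fact that the complement of a threshold graph is threshold.
\item \emph{Disjoint unions.} $S_k(G_1\cup G_2)=\max_{a+b=k}\bigl(S_a(G_1)+S_b(G_2)\bigr)$. Dominate each piece by threshold graphs $H_1,H_2$. The union $H_1\cup H_2$ is not threshold, so I would then need a ``merging'' lemma: for threshold graphs, the conjugate degree sequence of the union is majorized by that of a suitable threshold graph on the combined order and size. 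Via Theorem~\ref{thm:merris}, $S_k$ of a threshold graph is a partial sum of its conjugate degree sequence, so this lemma reduces to a Ferrers-diagram shifting argument.
\item \emph{Split graphs.} Apply the Grone–Merris–Bai inequality $\spec(L)\preceq d^*$, then exhibit a threshold graph of the same size whose conjugate sequence majorizes $d^*$. For split graphs I would build it by shifting the edges between the clique and the independent set toward the highest-degree independent vertices. This is a Ferrers-diagram move that preserves graphicality and only increases partial sums of $d^*$.
\end{itemize}

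\textbf{Step (iii), reduction to Brouwer.} For a threshold graph, Theorem~\ref{thm:merris} gives $S_k=\sum_{i\le k}d_i^*$. A direct count bounds this by $e+\binom{k+1}{2}$, since the first $k$ columns of the Ferrers diagram minus the staircase count edges. Hence universal dominance implies Brouwer's conjecture. The converse direction is where I expect trouble. It needs that $\max_H S_k(H)$ over threshold $H$ of the given order and size equals or exceeds every achievable value. That would follow if Brouwer's bound were attained by threshold graphs for each admissible $(n,e,k)$, but this need not be true. So this is the main obstacle. I would first try to prove the equivalence in the weaker form ``dominance for all graphs implies Brouwer'', and present the converse only as the authors' conjecture. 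I would also flag the split-graph Ferrers shifting in Step (ii) as the most delicate rigorous step.
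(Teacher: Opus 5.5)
The survey quotes this result from Helmberg--Trevisan without a proof, so I am judging your plan on its own terms. Step (i) is correct, including your treatment of the quantifier order via the maximizing index $k^*$. So are the complement argument and the direction ``universal dominance $\Rightarrow$ Brouwer''.

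\textbf{Gap in (iii).} The genuine gap is that you never compute the maximum of $S_k$ over threshold graphs of order $n$ and size $e$. That single fact is exactly what the converse in (iii) needs. In fact
\[
\max_{H\ \mathrm{threshold}}S_k(H)=\min\Bigl\{2e,\ e+\tbinom{k+1}{2},\ kn\Bigr\}\qquad(1\le k\le n-1),
\]
and each case has an explicit construction. If $e\le\binom{k+1}{2}$, put all edges in a threshold graph on $k+1$ vertices; every $e$ occurs, since edge counts of threshold graphs on $N$ vertices are the subset sums of $\{1,\dots,N-1\}$. Then $d^*_{k+1}=0$ and $S_k=2e$. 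If $\binom{k+1}{2}\le e\le kn-\binom{k+1}{2}$, take a clique on $v_1,\dots,v_{k+1}$ and join each remaining vertex to an initial segment of $v_1,\dots,v_k$. These neighbourhoods are nested, so the graph is threshold, and $\sum_v\min(d_v,k)=e+\binom{k+1}{2}$. If $e\ge kn-\binom{k+1}{2}$, add $k$ dominating vertices to a threshold graph on $n-k$ vertices, so $d^*_1=\cdots=d^*_k=n$. Since $S_k(G)\le 2e$ and $S_k(G)\le kn$ (because $\mu_1\le n$) hold for every graph, $G$ is spectrally threshold dominated if and only if it satisfies Brouwer's inequality for every $k$. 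Hence Brouwer's conjecture implies universal dominance immediately. Your worry that Brouwer's bound need not be attained is resolved because, outside the middle range, one of the unconditional bounds $2e$, $kn$ is smaller and is attained.

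\textbf{Gaps in (ii).} Both non-trivial targets in Step (ii) are false.
\begin{enumerate}
\item \emph{Split graphs.} You seek one threshold $H$ with $d^*(H)\succeq d^*(G)$. Conjugation reverses majorization, so this forces $d(H)\preceq d(G)$. Threshold sequences are majorization-maximal and uniquely realized, so this in turn forces $G\cong H$. Your shifting move shows the same thing locally: transferring an edge-end from $u$ to $w$ with $d_w\ge d_u$ moves a Ferrers cell from column $d_u$ to column $d_w+1$, so partial sums of $d^*$ go \emph{down}. Even spectrally no single $H$ works. The split graph $P_4$ has $(S_1,S_2)=(2+\sqrt2,\,4+\sqrt2)$. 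The only threshold graphs of order $4$ and size $3$ give $(4,5)$ for $K_{1,3}$ and $(3,6)$ for $K_3\cup K_1$. So the per-$k$ quantifier is essential, and your preferred single-$H$ reading makes the statement false. What works instead: let $G$ have clique $C$ with $|C|=c$, independent set $I$, and $e(C,I)$ edges between them. Then $S_k(G)\le\sum_v\min(d_v,k)\le ck+e(C,I)\le e+\binom{k+1}{2}$, where the last step is $(c-k)(c-k-1)\ge0$. The formula above then finishes the split case.
\item \emph{Disjoint unions.} Your merging lemma is false as stated. We have $d^*(H_1\cup H_2)=d^*(H_1)+d^*(H_2)$, and for $K_2\cup K_2$ this gives $d^*_1=4$, whereas every threshold graph of order $4$ and size $2$ has $d^*_1\le3$. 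You must compare $S_a(G_1)+S_b(G_2)$, not the union's conjugate sequence. With the formula this needs only $\binom{a+1}{2}+\binom{b+1}{2}\le\binom{a+b+1}{2}$ and $an_1+bn_2\le(a+b)(n_1+n_2)$.
\end{enumerate}
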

	
	Within the class of threshold graphs itself, the extremal Laplacian-energy graphs have been pinned down exactly:
	
	\begin{theorem}[Das--Mojallal \cite{DasMojallal2016}]
		The threshold graphs with the first four minimal Laplacian energies, and those with the largest and second-largest Laplacian energies, are determined explicitly in terms of $n$, $m$, and the trace of the degree sequence.
	\end{theorem}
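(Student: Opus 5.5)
The plan is to reduce the Laplacian energy, within threshold graphs of fixed order $n$ and size $e$ (I write $e$ for the number of edges, to avoid a clash with the run count $m$), to a purely combinatorial functional of the degree sequence, and then optimize that functional over threshold degree sequences.

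\textbf{Step 1: rewrite $LE$ via conjugate partitions.} By Theorem~\ref{thm:merris}, $\spec(L(G)) = d^*$. Put $\bar d = 2e/n$. Since $\sum_i(\mu_i-\bar d)=0$, we get
\[
LE(G)=\sum_{i=1}^n |d_i^*-\bar d| = 2\sum_{i=1}^{\sigma}(d_i^*-\bar d) = 2\Bigl(\sum_{i=1}^{\sigma} d_i^* - \sigma\bar d\Bigr),
\]
where $\sigma$ is the number of $i$ with $d_i^*\ge \bar d$. I read the phrase ``trace of the degree sequence'' in the statement as the Durfee-type quantity $f=\max\{i: d_i\ge i\}$. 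The key tool is $\sum_{i\le k} d_i^* = \sum_j \min(d_j,k)$, valid for every graph. This expresses $LE$ entirely through the creation sequence, which can be encoded by the run lengths $s_1,t_1,\dots,s_m,t_m$. It also expresses $LE$ through the Ferrers diagram, whose rows are the $d_j$ and whose columns are the $d^*_i$.

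\textbf{Step 2: local moves.} For threshold graphs, $d^*_i = d_i+1$ for $i\le f$ and $d^*_{i+1}=d_i$ for $i>f$. So the diagram is determined by its part above the diagonal, a shifted staircase-like shape of area $e$. Moving a cell of this shifted diagram to another corner preserves both $n$ and $e$ and keeps the graph threshold. This is exactly the operation of relocating one edge between a dominating and an isolated run.

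I will track how $\sum_{i\le\sigma}d_i^*$ changes under such a move. The sum increases when cells migrate toward the first $\sigma$ columns, and decreases when the shape spreads out. Maximizing $LE$ therefore means concentrating the cells into few long columns: a clique plus a star-like part, i.e.\ $K_k$ joined appropriately with pendant edges. Minimizing means making the shape as balanced as possible, i.e.\ a degree sequence as close to regular as the threshold constraint permits.

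\textbf{Step 3: identify the extremal graphs.} I will identify the maximizers (largest and second largest) and check them by a direct comparison of the closed forms obtained in Step 1.

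For the minimizers, I will show that any threshold graph not among four explicit candidates admits a move strictly decreasing $LE$. The four candidates are defined in terms of $f$, with each determined by the residue of $e$ relative to $\binom{f+1}{2}$ and $f(n-f)$. I will then order these four by explicit computation.

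\textbf{Main obstacle.} The hard part is the dependence on $\sigma$. The position at which $d^*_i$ crosses $\bar d$ can jump when a cell is moved, so the monotonicity of a move is not uniform. I would first treat the moves that keep $\sigma$ fixed, where $LE$ changes by $\pm2$ or $0$ and the analysis is clean. The crossing cases I would handle separately using the inequality $|x-\bar d|+|y-\bar d|\ge|x+y-2\bar d|$. The final ranking of the second through fourth minima will likely need a case split on whether $\bar d$ lies below or above $f$.
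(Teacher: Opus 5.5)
Your proposal has a genuine gap. As written it is a plan, not a proof. None of the extremal graphs is named, none of their energies is computed, and the maximization half is not specified at all. The obstacle you identify yourself (the crossing index $\sigma$ jumping under a move) is left open; you do not explain how $|x-\bar d|+|y-\bar d|\ge|x+y-2\bar d|$ would decide the sign of the change.

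More seriously, even a complete execution of your minimization strategy would not prove the statement. Showing that every graph outside a list of four candidates admits a strictly $LE$-decreasing move only shows that non-candidates are not local minima. That proves the \emph{global} minimum is attained by a candidate. It says nothing about the second, third and fourth smallest energies, which could be attained by a non-candidate one move away from the minimizer. What you need is a quantitative statement: a lower bound, valid for every threshold graph outside the list, that exceeds the energy of the fourth candidate. Analogously, you need an upper bound below the second maximizer.

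The ingredient that supplies such bounds is missing from your Step 1, and it explains why the answer is phrased through $n$, the size and the trace. Order the vertices by non-increasing degree. The first $f$ vertices of a threshold graph form a clique and the remaining ones an independent set, so $\sum_{i\le f}d_i=e+\binom{f}{2}$. Together with $d_i^*=d_i+1$ for $i\le f$, this gives $\sum_{i\le f}d_i^*=e+\binom{f+1}{2}$. Since $LE(G)=2\max_k\bigl(\sum_{i\le k}d_i^*-k\bar d\bigr)$, you get
\[
LE(G)\ \ge\ 2e+f(f+1)-\frac{4fe}{n},
\]
with equality if and only if $d^*_{f+1}\le\bar d\le d^*_f$.

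The extremal problem then becomes optimizing an explicit function of $(n,e,f)$, plus analysing when equality fails. It can genuinely fail: for the connected threshold graph with $n=11$ and $d=(10,10,10,4,4,3,\dots,3)$ one has $f=4$, $d^*=(11,11,11,5,3,\dots,3,0)$ and $\bar d=56/11>5=d^*_4$, so $\sigma=3\ne f$. If the extremal problems range over all connected threshold graphs of order $n$ with the size free, this bound is also what lets you compare graphs of different sizes, which size-preserving moves cannot do.

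Two smaller points:
\begin{itemize}
\item Your index identity is wrong. For a threshold sequence, $d_{f+1}=f$ and $d_i^*=d_{i+1}$ for $i\ge f+1$. For $K_{1,3}$, $d=(3,1,1,1)$ and $d^*=(4,1,1,0)$, so $d_4^*=0\ne d_3=1$.
\item The ``balanced versus concentrated'' heuristic of Step 2 has no majorization content. Distinct threshold degree sequences with the same $n$ and $e$ are pairwise incomparable in dominance order, since they are exactly the majorization-maximal graphic sequences. Conjugation reverses dominance, so the same holds for their spectra $d^*$.
\end{itemize}
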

	
	\begin{theorem} [Helmberg--Trevisan \cite{HelmbergTrevisan2015}]
		Among connected threshold graphs on $n$ vertices and $m$ edges, the maximum Laplacian energy is attained by one of two explicit extremal degree vectors, and for threshold graphs of fixed order (edges free) the maximizer is the ``pineapple'' graph with trace $\lfloor 2n/3 \rfloor$.
	\end{theorem}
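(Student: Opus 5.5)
The plan is to exploit Merris's theorem (Theorem~\ref{thm:merris}). For a threshold graph $G$ with degree vector $d$, the Laplacian spectrum is the conjugate partition $d^*$. Hence $LE(G)=\sum_i |d_i^*-\bar d|$ with $\bar d = 2m/n$, and the problem becomes purely combinatorial: maximize a function of a threshold degree sequence. Since $\sum_i d_i^* = \sum_i d_i = 2m$, we have $\sum_i |d_i^*-\bar d| = 2\sum_{d_i^*>\bar d}(d_i^*-\bar d)$. So $LE(G)$ equals twice the maximum over $k$ of $\sum_{i\le k} d_i^* - k\bar d$, attained at the number $k$ of conjugate entries exceeding the average.

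\textbf{Fixed $n$ and $m$.} Here $\bar d$ is fixed, and the threshold degree sequences with $m$ edges correspond to symmetric Ferrers-type shapes: their diagrams satisfy $d_i^* = d_i+1$ for $i$ up to the trace $f=\max\{i: d_i\ge i\}$. For each $k$, we want to maximize the partial sum $\sum_{i\le k} d_i^*$ subject to the size being $2m$ and the shape being a threshold shape with $d_1\le n-1$ and no isolated vertices (connectivity forces $d_1=n-1$). Fix $k$. Moving cells of the diagram from rows below $k$ up into the first $k$ columns of the conjugate increases the partial sum, and this move stays within threshold shapes. The extremal shape is therefore greedy: fill the first $k$ conjugate columns to height $n$ (equivalently $d^*_i$ maximal), then put the remainder in a final staircase.

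Optimizing over $k$ gives a short list of candidates. Comparing their energies explicitly should show that only two degree vectors can win, according to how $2m$ compares with the greedy filling: essentially "complete graph on a clique plus pendant-type attachments", with the last row partial either in a column or a row. I would carry out this comparison by direct computation of $2(\sum_{i\le k}d_i^*-k\bar d)$ for the two candidates.

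\textbf{Edges free.} The maximizer can be taken to be a complete split graph $K_f \vee \overline{K_{n-f}}$, the pineapple-type shape. The sorted degrees are $n-1$ ($f$ times) and $f$ ($n-f$ times), so the Laplacian spectrum is $n$ (with multiplicity $f$), $f$ (with multiplicity $n-f-1$), and $0$. Then $LE$ is an explicit piecewise-quadratic function of $f$, and maximizing it over integers gives $f=\lfloor 2n/3\rfloor$. One must also show that, for fixed trace $f$, this shape dominates; the same cell-moving argument as above handles this.

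\textbf{Main obstacle.} The hardest step is the exchange argument: showing that cell moves preserve the threshold shape while not decreasing $LE$, given that the threshold $\bar d$ and the index $k$ of the maximizing partial sum can shift. I would first handle this via the variational formula $LE = 2\max_k(\cdot)$, which makes monotonicity under moves that increase partial sums immediate.
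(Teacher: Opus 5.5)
The survey only quotes this result from Helmberg--Trevisan and gives no proof, so I am judging your proposal on its own. Your reduction $LE(G)=2\max_k\bigl(\sum_{i\le k}d_i^*-k\bar d\bigr)$ via Merris's theorem is correct. However, the edges-free part is wrong, and the fixed-$(n,m)$ part has a genuine gap.

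\textbf{Edges free.} The graph you take as maximizer is not the pineapple. The pineapple is $K_p$ with $n-p$ pendant vertices attached to a single clique vertex. Its degree sequence is $(n-1,(p-1)^{p-1},1^{n-p})$, its trace is $p-1$, and its Laplacian spectrum is $\{n,\,p^{(p-2)},\,1^{(n-p)},\,0\}$.

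For your complete split graph $K_f\vee\overline{K_{n-f}}$ one gets $LE=\frac{2f}{n}\bigl((n-f)^2+f\bigr)$. This is a cubic in $f$ maximized near $f\approx n/3$, not at $\lfloor 2n/3\rfloor$: for $n=9$ it equals $26$ at $f=3$ but only $20$ at $f=6$. Moreover, no complete split graph is optimal. For $n=9$ the pineapple with $p=7$ (trace $6$) has $LE=240/9>26$. Both families satisfy $LE\sim 8n^2/27$, and the pineapple wins only by a bounded amount ($2/3$ for $n=6$ and $n=9$), so an exact argument is unavoidable.

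Your claim that the complete split shape dominates at fixed trace is also false: $20$ versus $240/9$ at $n=9$, trace $6$. The cell-moving argument cannot rescue it. The variational formula gives monotonicity only among graphs with the same $m$, i.e.\ the same $\bar d$, whereas graphs of equal trace (such as these two) have different sizes.

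\textbf{Fixed $(n,m)$.} Filling the first $k$ conjugate columns to height $n$ means having $k$ dominating vertices, which requires $m\ge k(n-1)-\binom{k}{2}$. For larger $k$ your greedy shape does not exist. The relevant extremal configurations for $\sum_{i\le k}d_i^*=\sum_j\min(d_j,k)$ are then split shapes: the top $k$ vertices form a clique, the rest are independent, and the trace is at least $k$. Whenever such a shape with $m$ edges exists, it attains the bound $\sum_{i\le k}d_i^*\le m+\binom{k+1}{2}$, which holds for all threshold graphs.

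Since $m+\binom{k+1}{2}-k\bar d$ is convex in $k$, the optimal index can sit at the top of the feasible range, far from where your greedy rule lives. For example, for $(n,m)=(9,23)$ only three dominating vertices fit. Your greedy graph ($K_3$ joined to $P_3\cup 3K_1$) has $LE=226/9$, whereas $K_7$ with two pendants at one vertex has $LE=240/9$, with maximizing index $k=6$.

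So you have not actually identified the two extremal degree vectors, and ``comparing their energies should show'' is not a proof. A workable repair is as follows:
\begin{enumerate}
\item Determine $\max S_k$ over connected threshold graphs of order $n$ and size $m$ for every $k$, using the two regimes above (dominating-vertex filling for small $k$, split shapes for large $k$).
\item Locate the best $k$ using the linear-then-convex structure in $k$, and read off the two candidates.
\item Obtain the edges-free statement by maximizing these explicit values over $m$, rather than by optimizing over one hand-picked family.
\end{enumerate}
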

	
	A majorization-theoretic treatment computes the degree sequences of threshold graphs that maximize or minimize Laplacian energy for fixed $n$ and $m$, using Ferrers-diagram majorization arguments, and constructs pairs of threshold graphs of the same order with equal Laplacian energy but different Laplacian spectra.
	
	\begin{theorem}[\cite{HelmbergTrevisan2017}]
		A sharp eigenvalue-sum bound is used to give a full proof of the maximum Laplacian energy conjecture for \emph{connected} graphs, settling the connected case of the longstanding conjecture that threshold graphs (specifically the pineapple graph) maximize Laplacian energy among all connected graphs of given order and size — extending Theorem~\ref{thm:spectralthresholddominance}'s partial resolution.
	\end{theorem}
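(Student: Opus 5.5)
The plan is to take the eigenvalue-sum route to the connected case of the maximum Laplacian energy conjecture. I will use Merris's theorem (Theorem~\ref{thm:merris}) to make the threshold side computable, and then argue by a majorization bound that no connected graph can beat the extremal threshold graph.

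\textbf{Step 1 (energy as a partial sum).} Let $G$ have $n$ vertices, $m$ edges, Laplacian eigenvalues $\mu_1\ge\cdots\ge\mu_n$, and average degree $\bar d=2m/n$. Since $\sum_i(\mu_i-\bar d)=0$, the energy can be written as
\[
LE(G)=2\max_{k}\Bigl(\sum_{i\le k}\mu_i-k\bar d\Bigr)=2\Bigl(S_\sigma(G)-\sigma\bar d\Bigr),
\]
where $S_k(G)=\sum_{i\le k}\mu_i$ and $\sigma$ is the number of eigenvalues exceeding $\bar d$. Bounding $LE$ therefore reduces to a sharp upper bound on $S_k(G)-k\bar d$ that is uniform in $k$.

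\textbf{Step 2 (the threshold side).} For a threshold graph $T$, Theorem~\ref{thm:merris} gives $S_k(T)=\sum_{i\le k}d_i^*$, a count of boxes in the Ferrers diagram. I would maximize $\sum_{i\le k}d_i^*-k\bar d$ over threshold degree sequences. For fixed order this leads to the pineapple graph: a clique of about $2n/3$ vertices with pendant-free independent vertices attached, trace $\lfloor 2n/3\rfloor$. This is exactly the computation of Helmberg--Trevisan \cite{HelmbergTrevisan2015}, which I would cite directly.

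\textbf{Step 3 (comparing a general connected graph).} This step is the main obstacle. We need $S_k(G)-k\bar d$ to be at most the pineapple value for every connected $G$. Brouwer's bound $S_k\le m+\binom{k+1}{2}$ is not proven in general, so the plan is to use a weaker but sharp inequality in the relevant range of $k$, namely $S_k(G)\le \sum_{i\le k} d_i^*(G)$ (Grone--Merris--Bai). Combining this with the degree-sequence version of Step 2 is the key move: threshold graphs maximize the conjugate-partition sums (they are maximal in majorization), so
\[
S_k(G)-k\bar d\le \sum_{i\le k}d_i^*-k\bar d .
\]
The right-hand side depends only on a graphic sequence and is maximized at a threshold sequence. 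Optimizing over $k$ and over the edge count $m$ then returns the pineapple bound.

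\textbf{Step 4 (connectivity and fixed size).} Connectivity enters when Step 2 is restricted to connected threshold sequences. If $m$ is also fixed, I would also compare against the second extremal degree vector from \cite{HelmbergTrevisan2015} and check that the majorization argument respects the constraint. If the conjugate-sum inequality proves too weak for small $k$, my fallback is to handle those $k$ separately using $\mu_1\le n$.
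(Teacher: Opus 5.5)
\textbf{Step 3 fails because the inequality points the wrong way.} Steps 1 and 2 are sound. Step 1 is the standard rewriting of $LE$ as twice the largest excess partial sum, and Step 2 is Merris plus the threshold computation of \cite{HelmbergTrevisan2015}.

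Grone--Merris--Bai bounds $S_k(G)$ by the conjugate of $G$'s \emph{own} degree sequence. Now $\sum_{i\le k}d_i^*=\sum_j\min(d_j,k)$ is a Schur-concave function of $d$; equivalently, conjugation reverses the dominance order on partitions of $2m$. Threshold sequences are maximal in majorization. So replacing $G$ by a threshold graph $T$ of the same order and size whose degree sequence majorizes $G$'s can only \emph{decrease} these partial sums. You therefore have $S_k(G)\le\sum_{i\le k}d_i^*(G)$ and $S_k(T)=\sum_{i\le k}d_i^*(T)\le\sum_{i\le k}d_i^*(G)$, and these say nothing about $S_k(G)$ versus $S_k(T)$. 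Your claim that the right-hand side is maximized at a threshold sequence is false.

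For instance, take $n=m=4$. The only threshold graph is the paw, with $\spec(L)=d^*=(4,3,1,0)$ and $LE=6$, while $C_4$ has $d^*=(4,4,0,0)$. At $k=2$ your right-hand side is $8-4=4$ for $C_4$ but $7-4=3$ for the paw. Steps 1--3 then only yield $LE(C_4)\le 8$, although $LE(C_4)=4$.

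Restricting to the relevant index $k=\sigma(G)$ does not rescue this. For the Paley graph on $q$ vertices, $\sigma=(q-1)/2$ equals the degree. Grone--Merris--Bai at $k=\sigma$ then gives only $LE\le (q^2-1)/2$, essentially the trivial bound $LE\le n^2/2$ valid for every graph. By contrast, the pineapple graph, which \cite{HelmbergTrevisan2015} identifies as the threshold graph of order $q$ with largest Laplacian energy, has $LE\approx 8q^2/27$.

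\textbf{Why the step cannot be repaired within your plan.} It never uses connectivity. If it worked, it would produce, for every graph $G$ and every $k$, a threshold graph $T$ of the same order and size with $S_k(T)\ge S_k(G)$. That is exactly universal spectral threshold dominance, which the paper lists as open (Conjecture~\ref{conj:universaldominance}). It would also imply Brouwer's conjecture, since Brouwer's bound is known to hold for threshold graphs. The missing idea is a genuine comparison between the Laplacian eigenvalue sums of an arbitrary connected graph and those of some threshold graph, or another estimate in which connectivity plays an essential role. The Grone--Merris--Bai inequality cannot supply it, and your fallback $\mu_1\le n$ is far too weak. The paper itself gives no argument for this statement, only a citation, so there is nothing there to reconcile your proof with.

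Two smaller points. Steps 2--4 conflate two problems. The fixed-order problem has the pineapple as its threshold optimum; the pineapple is a clique with pendant vertices attached to one clique vertex, not ``pendant-free''. The fixed-$(n,m)$ problem has the two candidate degree vectors of \cite{HelmbergTrevisan2015} as its threshold optima. With $m$ prescribed you also cannot ``optimize over the edge count''.
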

	
	\begin{remark}
		This last result is very important and settles, for connected graphs, a conjecture that had stood since Gutman and Zhou's original introduction of Laplacian energy.
	\end{remark}
	
	\subsection{Normalized and other energy variants}
	
	The normalized adjacency energy (equal, it turns out, to the normalized Laplacian Randi\'c index) of threshold graphs is computed exactly in Section~\ref{sec:normalized}; distance, distance Laplacian, and distance signless Laplacian energies are discussed alongside their respective spectra in Section~\ref{sec:distance}.

	\section{Distance-based matrices}
	\label{sec:distance}
	
	Because a connected threshold graph with at least one dominating run has diameter at most $2$, its distance matrix satisfies the clean identity
	\begin{equation}
		\label{eq:diam2}
		D(G) = 2(J-I) - A(G),
	\end{equation}
	which is the starting point for every result in this section.
	
	\subsection{The distance spectrum}
	\label{sec:distance-spec}
	
	\begin{theorem}[Lu--Huang--Lou \cite{LuHuangLou2018}]
		\label{thm:distinctdisteig}
		Let $\Gamma$ be a threshold graph with representation sequence $(0^{s_1},1^{t_1},\ldots,0^{s_m},1^{t_m})$, $s=\sum s_i$, $t = \sum t_i$. Then the inertia of $D(\Gamma)+2I$ is $(m-1,\,s-m,\,t+1)$. Explicit formulas for the full distance spectrum, and for the distance energy, are given in terms of the quotient matrix of the equitable partition, and all threshold graphs with \emph{distinct} distance eigenvalues (i.e.\ $s(D(\Gamma))=n$) are characterized completely.
	\end{theorem}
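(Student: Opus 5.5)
The approach is to write, via \eqref{eq:diam2}, $D(\Gamma)+2I = 2J - A(\Gamma)$. This is a rank-one positive semidefinite perturbation of $-A(\Gamma)$, so the plan is to combine Bapat's inertia result (Theorem~\ref{thm:bapat}) with the twin-vector and quotient-matrix machinery of Section~\ref{sec:intro}. I read the triple $(m-1,\,s-m,\,t+1)$ as (negative, zero, positive); the count below confirms this reading.

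\textbf{Step 1: local eigenvalues.} For false twins $u,v$ in an isolated run, $A(e_u-e_v)=0$ and $J(e_u-e_v)=0$, so $e_u-e_v$ lies in the kernel of $2J-A$. This gives $\sum_i(s_i-1)=s-m$ independent null vectors. For true twins in a dominating run, $A(e_u-e_v)=-(e_u-e_v)$, so these vectors are eigenvectors of $2J-A$ with eigenvalue $+1$. This gives $t-m$ positive eigenvalues. The remaining $2m$ eigenvalues are those of the $2m\times 2m$ quotient matrix of the run partition. Equivalently, they come from the restriction of the quadratic form of $2J-A$ to the class-constant vectors. It therefore suffices to show that this restriction is nonsingular, with $m+1$ positive and $m-1$ negative eigenvalues.

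\textbf{Step 2: interlacing bounds.} By Theorem~\ref{thm:bapat}, $-A$ has $t$ positive eigenvalues and nullity $s-m$, hence $n-t-(s-m)=m$ negative eigenvalues. Adding the rank-one positive semidefinite matrix $2J$ changes each inertia count by at most one and cannot decrease the positive count. Therefore $2J-A$ has between $t$ and $t+1$ positive eigenvalues and between $m-1$ and $m$ negative ones. Since the $s-m$ twin null vectors persist, its nullity is at least $s-m$.

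\textbf{Step 3: nonsingularity and sign of the quotient.} This is the main obstacle. I would first show that the quotient has no kernel. A class-constant null vector $x$ satisfies $Ax = 2(\mathbf 1^{\top}x)\mathbf 1$. Unwinding this row by row along the nested-neighbourhood order (the rows of two consecutive classes differ exactly by one run) should force $x$ to vanish class by class. The base case is the last dominating run $1^{t_m}$, whose vertices are adjacent to everything, and one then propagates backwards.

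Next I would compute the sign of the quotient determinant. I would do this by induction on $m$: appending a block $0^{s}1^{t}$ to the creation sequence adds two rows, and a Schur complement (or the Jacobs--Trevisan recursion on the quotient) should show that the sign is multiplied by $-1$ at each step, giving sign $(-1)^{m-1}$ overall. Given nonsingularity and the bounds of Step~2, the quotient has at least $m-1$ negative eigenvalues, so its determinant sign fixes the negative count at $m-1$ rather than $m$. The positive count is then $t+1$.

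If the determinant recursion proves messy, the fallback for that last step is direct: exhibit a $(t+1)$-dimensional subspace on which $2J-A$ is positive definite. Take the positive eigenspace of $-A$ together with a suitable vector having large $\mathbf 1$-component, for instance the indicator of the last dominating run. This yields the positive count $t+1$, and the negative count $m-1$ then follows by complement.
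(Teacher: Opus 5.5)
The paper does not prove this statement; it is quoted from Lu--Huang--Lou and only summarized. Your proposal therefore has to stand on its own.

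\textbf{What works.} Steps~1 and~2 are correct. The twin vectors give $s-m$ zeros and $t-m$ eigenvalues $+1$ of $D(\Gamma)+2I=2J-A$. By Theorem~\ref{thm:bapat}, $-A$ has inertia $(m,\,s-m,\,t)$, and Weyl interlacing for the rank-one update leaves exactly three candidates: $(m-1,s-m,t+1)$, $(m,s-m,t)$ and $(m-1,s-m+1,t)$. Your kernel sketch also goes through. Write $a_i,b_i$ for the values of a class-constant $x$ on $S_i,T_i$ and $c=\mathbf 1^\top x$. The $S_m$-row gives $t_mb_m=2c$ and the $T_m$-row gives $b_m=-c$, so $c=0$, after which the remaining rows force every $a_i,b_i$ to vanish.

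\textbf{The gap.} Step~3 carries all the content but is only announced. Nonsingularity eliminates only the third candidate. Separating $(m-1,s-m,t+1)$ from $(m,s-m,t)$ needs the sign of the quotient determinant, and you give no mechanism for it. The sign flip under appending $0^{s}1^{t}$ is never computed, and the Jacobs--Trevisan recursion is for $A$, not for $2J-A$. The fallback is not automatic either: positivity of $2J-A$ on $P_+\oplus\operatorname{span}(\mathbf 1_{T_m})$, where $P_+$ is the positive eigenspace of $-A$, depends on controlling a cross term. A vector with large $\mathbf 1$-component does not do that by itself. Your proposal also addresses only the inertia clause, not the characterization of graphs with $n$ distinct distance eigenvalues.

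\textbf{How to close it.} Both routes can be finished with ingredients you already have. A vertex of $S_i$ is adjacent exactly to $T_i\cup\cdots\cup T_m$. A vertex of $T_i$ is adjacent to $S_1\cup\cdots\cup S_i$ and to all other dominating vertices. Solving $Ay=\mathbf 1$ row by row on the class-constant space $U$ gives the unique solution $y=\frac{1}{s_mt_m}\mathbf 1_{S_m}+\frac{1}{t_m}\mathbf 1_{T_m}$, so $\mathbf 1^\top y=1+1/t_m$. By Theorem~\ref{thm:bapat} and your Step~1, $(-A)|_U$ is invertible with $m$ positive and $m$ negative eigenvalues. The matrix determinant lemma then gives
\[
\det\left((2J-A)|_U\right)=\det\left((-A)|_U\right)\left(1-2\,\mathbf 1^\top y\right)=-\left(1+\frac{2}{t_m}\right)\det\left((-A)|_U\right).
\]
This is nonzero with sign $(-1)^{m-1}$, so it yields nonsingularity and the parity at once, with no induction on $m$. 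Since all negative eigenvalues lie in $U$, Step~2 then forces $n_-=m-1$.

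For the fallback, put $w=\mathbf 1_{T_m}$, so that $Aw=t_m\mathbf 1-w$. For $x\in P_+$ with $\sigma=\mathbf 1^\top x$ one gets
\[
(x+\beta w)^\top(2J-A)(x+\beta w)=x^\top(-A)x+2\beta\,x^\top w+2\left(\sigma+\frac{\beta t_m}{2}\right)^2+\beta^2t_m\left(\frac{t_m}{2}+1\right).
\]
The cross term is absorbed only because every negative adjacency eigenvalue of a threshold graph is at most $-1$ (Theorem~\ref{thm:eigloc}). That gives $(x^\top w)^2\le t_m\,x^\top(-A)x$ on $P_+$, and the form becomes $(X-|\beta|\sqrt{t_m})^2+\beta^2t_m^2/2+2(\cdot)^2$ with $X^2=x^\top(-A)x$. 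This extra spectral input is exactly what your fallback leaves out.
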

	
	\begin{theorem}[Lazzarin--M\'arquez--Tura \cite{LazzarinEtAl2023}]
		\label{thm:distcospectral}
		An explicit formula for the distance characteristic polynomial of a threshold graph, computed directly from its binary creation sequence, is given; using it, a necessary and sufficient condition for two non-isomorphic threshold graphs to be distance-cospectral is derived, and explicit non-isomorphic distance-cospectral pairs are exhibited — showing threshold graphs are \emph{not}, in general, determined by their distance spectrum, in contrast to several of the other matrices surveyed below (see Section~\ref{sec:cospectrality}).
	\end{theorem}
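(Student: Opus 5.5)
Since $D(G)=2(J-I)-A(G)$ by \eqref{eq:diam2}, the distance characteristic polynomial is governed entirely by the adjacency structure on the twin classes. My plan has four steps: compute the local spectrum, compute the quotient spectrum, form the characteristic polynomial from the creation sequence, and read off the cospectrality criterion.

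\textbf{Local spectrum.} I first handle the twin eigenvectors. Let $u,v$ be false twins in an isolated run. Then $e_u-e_v$ is orthogonal to $\mathbf 1$, and $A(e_u-e_v)=0$, so it is a $D$-eigenvector with eigenvalue $-2$. For true twins in a dominating run, $A(e_u-e_v)=-(e_u-e_v)$, which gives eigenvalue $-2+1=-1$. Together these account for
\[
\det(xI-D)=(x+2)^{s-m}(x+1)^{t-m}\,\det(xI-\widetilde D),
\]
where $\widetilde D$ is the $2m\times 2m$ quotient matrix of the equitable partition into runs. The count $s-m$ for the factor $x+2$ agrees with the inertia in Theorem~\ref{thm:distinctdisteig}.

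\textbf{Quotient polynomial.} Next I compute $\det(xI-\widetilde D)$ explicitly from $(s_1,t_1,\dots,s_m,t_m)$. I would order the classes by creation time. The entry of $\widetilde D$ for class $i$ towards class $j$ is $|C_j|$ times $1$ or $2$, depending on whether the later of the two classes is dominating, with the diagonal blocks adjusted. The resulting matrix has a nested ``arrowhead/staircase'' pattern. Subtracting consecutive rows, or equivalently adding one run at a time, should then give a three-term recursion $P_k(x)=\alpha_k(x)P_{k-1}(x)+\beta_k(x)P_{k-2}(x)$ whose coefficients are linear in $s_k$ and $t_k$. This mirrors the Jacobs--Trevisan recursion for the adjacency polynomial, and unrolling it gives the explicit formula.

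\textbf{Cospectrality criterion.} Two threshold graphs are distance-cospectral if and only if their full polynomials agree. Because $-1$ and $-2$ may also occur as roots of the quotient factor, the criterion has to be phrased as equality of the products
\[
(x+2)^{s-m}(x+1)^{t-m}P_m(x).
\]
I would compare coefficients and use the traces $\Tr D^k$ for small $k$. This shows that $n$, $s$, and the edge count must agree, and it reduces the criterion to an identity among the recursion data. To exhibit non-isomorphic pairs, I would search small $m$ ($m=2$ or $3$) for distinct run vectors giving the same polynomial, solving the resulting Diophantine system by hand or by small computation.

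The main obstacle is making the recursion for $P_m$ clean enough that its equality for two different run sequences can be stated as a usable necessary and sufficient condition. I would first try to find a tridiagonal reduction in the style of \cite{AndelicEtAl2020} so that $P_m$ becomes a continuant, and then compare continuants.
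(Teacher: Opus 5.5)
Your reduction $\det(xI-D)=(x+2)^{s-m}(x+1)^{t-m}\det(xI-\widetilde D)$ is correct. Beyond that, however, the proposal is a plan rather than a proof, and all three assertions of the statement remain open.

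\begin{itemize}
\item The recursion for $P_m$ is only expected (``should then give''), so no explicit formula is obtained.
\item The ``criterion'' that the products $(x+2)^{s-m}(x+1)^{t-m}P_m(x)$ coincide is the definition of cospectrality restated. It is not a condition on the run vectors.
\item Most seriously, no non-isomorphic distance-cospectral pair is exhibited. That existence claim is exactly what ``threshold graphs are not determined by their distance spectrum'' means.
\end{itemize}

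Nor should you expect a pair to fall out of a quick search. By the inertia $(m-1,s-m,t+1)$ of $D+2I$ (Theorem~\ref{thm:distinctdisteig}; this, not the traces, is what pins down $m$ and $s$), cospectral threshold graphs share $m$, $s$ and $t$. Next, $\mathrm{tr}\,D^2$ fixes the number of edges, and the following coefficient fixes $\sum_v d_v^2$ minus the number of triangles. These invariants already exclude every candidate pair on at most six vertices.

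What is missing is the actual computation, which for $m=2$ is short. Put $B=D+2I$ and take its quotient on the classes $S_1,T_1,S_2,T_2$.
\begin{itemize}
\item An off-diagonal entry is $|C_j|$ or $2|C_j|$, according as the later of the two classes is dominating or isolated.
\item The diagonal entries are $2s_k$ for $S_k$ and $t_k+1$ for $T_k$.
\end{itemize}
Subtract the $S_2$-row from the $S_1$- and $T_1$-rows, then the $T_1$-row from the $S_1$-row, and expand along the last column. This gives
\[
\det(xI-\widetilde B)=x^4-(2s+t+2)x^3+(st-3s_2t_1+4s+t+1)x^2+E_1x-s_1s_2t_1(t_2+2),
\]
with $E_1=s_2t_1t_2+2s_1s_2t_1+2s_2t_1-s_2t_2-s_1t-2s$.

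Now compare two such graphs with the same $s,t$.
\begin{itemize}
\item Equality of the $x^2$ and constant coefficients forces equal $s_2t_1$ and equal $s_1(t_2+2)$.
\item These two hyperbolas in $(s_1,t_1)$ meet in at most two points, and both lie on the line $(t+2)s_1-st_1=\mathrm{const}$.
\item For two distinct points on that line, matching $E_1$ forces $t=2s-2$.
\end{itemize}
This yields the involution $(s_1,t_1,s_2,t_2)\mapsto(s-t_1/2,\,2s_2,\,t_1/2,\,2s_1-2)$, valid when $t=2s-2$, $t_1$ is even, $s_1\ge 2$ and $t_1\ne 2s_2$. That is a genuine necessary and sufficient condition within the $m=2$ family.

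Its smallest instance is $(2,2,2,4)\leftrightarrow(3,4,1,2)$. The creation sequences $0011001111$ and $0001111011$ give non-isomorphic graphs on $10$ vertices and $35$ edges. Both have distance characteristic polynomial $(x+2)^2(x+1)^4(x^4-8x^3-37x^2-8x+4)$.

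A computation of this kind, and its extension to general $m$, is what your outline still needs before it establishes the statement.
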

	
	\subsection{The distance Laplacian spectrum}
	\label{sec:distance-laplacian}
	
	Since $D(G)\mathbf{1}$ is not generally proportional to $\mathbf{1}$ unless $G$ is transmission-regular, the distance matrix itself does not inherit a clean closed form the way the Laplacian does. The distance \emph{Laplacian}, however, does — via a short master identity.
	
	\begin{proposition}[Master identity]
		\label{prop:master}
		For any graph $G$ of diameter at most $2$ on $n$ vertices,
		\[
		D^L(G) = 2nI - 2J - L(G).
		\]
		Consequently $\spec(D^L(G)) = \{0\} \cup \{2n-\mu : \mu \in \spec(L(G))\setminus\{0\}\}$, a reflection of the ordinary Laplacian spectrum.
	\end{proposition}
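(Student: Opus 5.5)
The plan is a direct entrywise computation followed by a simultaneous diagonalization. We read "diameter at most $2$" as including connectivity (all distances finite), and assume $n\ge 2$ so that distances take values in $\{1,2\}$ off the diagonal.

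\textbf{Step 1: the distance matrix.} For two distinct vertices $u\neq v$ we have $D(u,v)=1$ if $uv\in E(G)$ and $D(u,v)=2$ otherwise. This gives exactly \eqref{eq:diam2}, $D(G)=2(J-I)-A(G)$, for any such graph, not only threshold graphs.

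\textbf{Step 2: the transmissions.} Summing a row of \eqref{eq:diam2}, a vertex $u$ is at distance $1$ from its $d(u)$ neighbours and at distance $2$ from the other $n-1-d(u)$ vertices. Hence
\[
\Tr(u)=d(u)+2\bigl(n-1-d(u)\bigr)=2(n-1)-d(u),
\qquad\text{that is,}\qquad
\Tr(G)=2(n-1)I-\Deg(G).
\]

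\textbf{Step 3: the master identity.} Substituting both expressions into $D^L(G)=\Tr(G)-D(G)$ gives
\[
D^L(G)=2(n-1)I-\Deg(G)-2J+2I+A(G)=2nI-2J-\bigl(\Deg(G)-A(G)\bigr),
\]
which is $2nI-2J-L(G)$. This is a purely algebraic check.

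\textbf{Step 4: the spectrum.} This is the only step needing care. Since $L(G)\mathbf 1=0$ and $J=\mathbf 1\mathbf 1^{\top}$, we have $L(G)J=JL(G)=0$, so $L(G)$ and $J$ commute. Therefore there is an orthonormal eigenbasis of $L(G)$ that contains $\mathbf 1/\sqrt n$ and whose other vectors are orthogonal to $\mathbf 1$.

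On $\mathbf 1$, the eigenvalue of $D^L(G)$ is $2n-2n-0=0$. On each remaining basis vector $x$, with $L(G)x=\mu x$ and $Jx=0$, the eigenvalue is $2n-\mu$.

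The point to state carefully is the multiset bookkeeping: "$\spec(L(G))\setminus\{0\}$" must mean removing \emph{one} copy of the eigenvalue $0$. Because a graph of diameter at most $2$ is connected, $0$ is a simple Laplacian eigenvalue. So the remaining $n-1$ eigenvalues $\mu$ are exactly the nonzero Laplacian eigenvalues, and the claimed reflection $\spec(D^L(G))=\{0\}\cup\{2n-\mu\}$ follows.
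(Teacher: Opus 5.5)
Your proof is correct and matches the argument the paper intends: the paper gives no written proof and presents the identity as an immediate consequence of $D(G)=2(J-I)-A(G)$ from \eqref{eq:diam2}, which is exactly your Steps 1--3. Your Step 4 ($LJ=JL=0$, plus simplicity of the Laplacian eigenvalue $0$ for connected $G$) correctly derives the spectral reflection, including the multiset bookkeeping that the statement leaves implicit.
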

	
	Combining Proposition~\ref{prop:master} with Merris's conjugate-partition theorem (Theorem~\ref{thm:merris}) gives, for threshold graphs specifically, a fully explicit, combinatorial formula for the distance Laplacian spectrum — this combination is identified independently by Lather, Gupta, and Balaji \cite{LatherGuptaBalaji2025} and by Megha and Parvathy \cite{MeghaParvathy2026}, both in 2025--2026.
	
	\begin{theorem}[Lather--Gupta--Balaji \cite{LatherGuptaBalaji2025}]
		\label{thm:dlthreshold}
		Let $G$ be a connected threshold graph on $n$ vertices with degree sequence $d_1\ge\cdots\ge d_n$ and conjugate partition $d^*$. Then
		\[
		\spec(D^L(G)) = \{0\} \cup \{2n-d_i^* : d_i^*\ne 0\},
		\]
		and this formula \emph{characterizes} connected threshold graphs: no two non-isomorphic connected threshold graphs are $D^L$-cospectral. Moreover:
		\begin{enumerate}
			\item the number of dominating and pendant vertices of $G$ can be read off directly from the multiset $\spec(D^L(G))$;
			\item the dominating vertices of $G$ can be identified from the eigenvectors of $D^L(G)$;
			\item among all connected threshold graphs on $n$ vertices, every coefficient of the characteristic polynomial $\chi_{D^L(G)}(x)$ (in absolute value) is minimized by the complete graph $K_n$ and maximized by the star $K_{1,n-1}$.
		\end{enumerate}
	\end{theorem}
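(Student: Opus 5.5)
The plan is to derive everything from the master identity (Proposition~\ref{prop:master}) together with Merris's theorem (Theorem~\ref{thm:merris}). A connected threshold graph has $b_n=1$, so its last vertex dominates and $\diam G\le 2$. Hence $D^L(G)=2nI-2J-L(G)$. The matrices $L(G)$ and $J$ commute and share the eigenvector $\mathbf 1$. On $\mathbf 1^\perp$, $J$ acts as $0$, while on $\mathbf 1$ we have $D^L\mathbf 1=0$.

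So the spectrum of $D^L(G)$ is $0$ together with the values $2n-\mu$, where $\mu$ runs over the Laplacian eigenvalues other than the one attached to $\mathbf 1$. By Merris's theorem, $\spec L(G)=d^*$. Connectivity means $0$ has multiplicity exactly one in $\spec L$. Since $d_i^*=0$ exactly when $i>d_1$, I need $d_1=n-1$, so that there is exactly one zero in $d^*$ among the $n$ entries; this holds because there is a dominating vertex. This yields the displayed formula.

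For the characterization clause, I would argue as follows. If two connected threshold graphs on $n$ vertices are $D^L$-cospectral, then by the formula their conjugate partitions $d^*$ agree. Conjugation is an involution on partitions, so their degree sequences agree. A threshold graph is determined up to isomorphism by its degree sequence; one way to see this is that the creation sequence is recovered greedily, by repeatedly removing a vertex of degree $0$ or $n'-1$ in the current graph. So the two graphs are isomorphic.

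For item 1, the dominating vertices are those of degree $n-1$, and their number is $d_{n-1}^*$. This count equals the multiplicity of the smallest nonzero eigenvalue when it equals $n+1$, namely $2n-(n-1)$. More cleanly, $d^*_{n-1}$ is read off from how many eigenvalues equal $n+1$ once $d^*$ is recovered as $2n-\lambda$. The number of pendant vertices is $n-d_2^*$, read off from $d^*_2$, since $d_i^*$ counts the vertices of degree at least $i$.

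For item 2, twin-class vectors $e_u-e_v$ within the dominating run are eigenvectors for the eigenvalue $n+1$. I expect the $(n+1)$-eigenspace to be spanned by vectors supported on the dominating vertices together with the structure coming from the last run; its support then identifies the dominating vertices. This needs a careful check via the equitable partition quotient.

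Item 3 is the main obstacle. The nonzero eigenvalues are $\lambda_i=2n-d_i^*$ for $i=1,\dots,n-1$, and $\sum_i d_i^* = 2|E|$. The absolute values of the coefficients of $\chi$ are the elementary symmetric functions $e_k(\lambda_1,\dots,\lambda_{n-1})$. Each $\lambda_i$ lies in $[n+1,2n-1]$, and it is monotone decreasing in $d_i^*$.

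For $K_n$ we have $d_i^*=n$ for all $i\le n-1$, so every $\lambda_i$ is as small as possible, namely $n$. For the star, $d^*=(n,1,\dots,1,0)$, so $\lambda=(n,2n-1,\dots,2n-1)$. The key point is that $d_1^*=n$ always, because every vertex has degree at least $1$; hence $\lambda_1=n$ is forced. For $i\ge2$ we have $1\le d_i^*\le n$, so $\lambda_i\in[n,2n-1]$, and both extremes are attained entrywise.

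Since each $e_k$ is coordinatewise increasing on positive vectors, entrywise domination gives both the minimization and the maximization immediately, without any majorization argument. I would double-check the degenerate indices, in particular that $d^*_{n-1}\ge1$.
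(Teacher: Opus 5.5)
Your derivation of the displayed spectrum is correct and is exactly the master-identity-plus-Merris route the paper attributes to Lather, Gupta and Balaji. The same holds for the cospectrality clause (conjugation is an involution, and a threshold graph is recovered greedily from its degree sequence). Item 3 is also correct: the entrywise bounds $\lambda_1=n\le\lambda_i\le 2n-1$, from $d_1^*=n$ and $d_{n-1}^*\ge1$, plus monotonicity of the elementary symmetric functions, settle it. The interval $[n+1,2n-1]$ you first write there is a slip that you correct a few lines later.

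\textbf{Items 1 and 2.} The genuine problems are here: you attach the dominating vertices to the wrong eigenvalue. If $u$ is dominating, column $u$ of $D^L(G)$ has diagonal entry $\Tr(u)=n-1$ and every off-diagonal entry equal to $-1$. Hence $D^L e_u=ne_u-\mathbf 1$, and $D^L(e_u-e_v)=n(e_u-e_v)$ for two dominating vertices. So the twin vectors of the last run belong to the eigenvalue $n$, not $n+1$. The value $n+1$ corresponds to parts $d_i^*=n-1$ and has nothing to do with domination. Consequently your count in item 1 is wrong: for $K_n$ and $K_{1,n-1}$ the eigenvalue $n+1$ does not occur at all, yet both graphs have dominating vertices. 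Indeed the smallest nonzero eigenvalue is always $n$, as you yourself observe in item 3. The correct reading is that the number of dominating vertices is $d_{n-1}^*=2n-\lambda_{\max}(D^L(G))$, since the largest eigenvalue comes from the smallest nonzero part of $d^*$. Your pendant count $n-d_2^*$ is fine.

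For item 2 you give no argument, only an expectation, and the support idea fails even after replacing $n+1$ by $n$. Using $D^L=2nI-2J-L(G)$ and $L(G)+L(\overline{G})=nI-J$, the $n$-eigenspace of $D^L(G)$ is the space of zero-sum vectors that are constant on the connected components of $\overline{G}$. For $G\neq K_n$ these components are the singletons of the dominating vertices and one component formed by all non-dominating vertices. (Each vertex of the last isolated run is adjacent in $\overline{G}$ to every other non-dominating vertex.) So this eigenspace always contains vectors of full support. When there is a single dominating vertex $u$ (e.g.\ the star) it is one-dimensional, spanned by $ne_u-\mathbf 1$, whose support is the whole vertex set and does not single out $u$.

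What does work is the column computation above read backwards: $u$ is dominating if and only if $e_u-\frac1n\mathbf 1$ is an eigenvector of $D^L(G)$ for the eigenvalue $n$. Since $D^L\mathbf 1=0$, this condition is equivalent to $D^L e_u=ne_u-\mathbf 1$, which forces $D(u,w)=1$ for every $w\neq u$.
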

	
	\begin{theorem}[Megha--Parvathy \cite{MeghaParvathy2026}]
		\label{thm:dlthreshold2}
		Working directly with the equitable partition of the creation sequence (rather than via Merris's theorem), the distance Laplacian spectrum of a connected threshold graph is computed, and threshold graphs with at most four distinct distance Laplacian eigenvalues are classified explicitly, including the \emph{pineapple graph} as a canonical example attaining few distinct eigenvalues.
	\end{theorem}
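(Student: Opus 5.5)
The statement to prove is Theorem~\ref{thm:dlthreshold2}: the $D^L$ spectrum of a connected threshold graph, computed from the equitable partition, together with the classification of those with at most four distinct $D^L$-eigenvalues. I would work directly from the run decomposition $0^{s_1}1^{t_1}\cdots 0^{s_m}1^{t_m}$. Connectedness forces the last bit to be $1$, so the diameter is at most $2$ and \eqref{eq:diam2} holds.

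\textbf{Transmissions.} For $u$ in the run $0^{s_i}$ (or $1^{t_i}$), $\Tr(u)=2(n-1)-d(u)$. The degrees are explicit: a vertex in $0^{s_i}$ is adjacent exactly to the dominating vertices created after it, so $d=\sum_{j\ge i}t_j$. A vertex in $1^{t_i}$ is adjacent to every vertex created before it, to the other members of its own run, and to later dominating vertices.

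\textbf{Local eigenvalues.} I would use twin-difference vectors $e_u-e_v$. For false twins $u,v$ in $0^{s_i}$ we have $D(u,v)=2$, which gives the eigenvalue $\Tr(u)+2=2n-d(u)$. For true twins in $1^{t_i}$ we have $D(u,v)=1$, which gives $\Tr(u)+1=2n-1-d(u)$. These account for $n-2m$ eigenvalues, namely $s_i-1$ and $t_i-1$ copies per run.

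\textbf{Quotient eigenvalues.} The remaining $2m$ eigenvalues are those of the $2m\times 2m$ quotient matrix $B$ of the equitable partition. Rather than diagonalize $B$ by hand, I would write $D^L=2nI-2J-L$, valid because the diameter is at most $2$ (equivalently, Proposition~\ref{prop:master}). The quotient of $J$ contributes only the eigenvalue $n$, on the constant vector, so the $B$-eigenvalues are $0$ together with $2n-\mu$ for the nonzero quotient Laplacian eigenvalues $\mu$. I would then compute the quotient Laplacian spectrum directly: it is triangular-like after subtracting consecutive class vectors, and its eigenvalues are $n$ and the degree-type values $\sum_{j>i}(\cdots)$ read from the run lengths. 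This recovers the conjugate-partition values without citing Theorem~\ref{thm:merris}. As a consistency check, the full spectrum should match $\{0\}\cup\{2n-d_i^*\}$ from Theorem~\ref{thm:dlthreshold}.

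\textbf{Classification.} The distinct values are $0$, $n$ (from the dominating class $1^{t_m}$), the $m$ values $2n-d$ from the isolated runs, and the values $2n-1-d$ from the dominating runs. Most of these are pairwise distinct because the degrees are strictly monotone across runs. The key counting step is to show that the number of distinct $D^L$-eigenvalues grows roughly like $2m$, up to coincidences between $2n-d(0^{s_i})$ and the quotient values. I would therefore split into the cases $m=1$ (complete split graphs, which give at most $4$ values) and $m=2$ with restricted run lengths. Within these I would identify exactly when several values collide; examples include $t_m=1$ and $s_i=1$, where a twin class is absent. This should yield the explicit list, with the pineapple graph $0^{s}1^{t}$ as the canonical example.

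\textbf{Main obstacle.} The hardest step is the case bookkeeping in the classification: run lengths equal to $1$ remove local eigenvalues, and accidental equalities between local and quotient eigenvalues must each be tracked. I would handle this by writing every eigenvalue as $2n-(\text{a partial sum of the } s_j,t_j)$ and comparing the partial sums directly.
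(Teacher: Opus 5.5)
Your local eigenvalues ($2n-d(u)$ on isolated runs, $2n-1-d(u)$ on dominating runs) are correct, and so is the reduction of the quotient through $D^L=2nI-2J-L$. The quotients satisfy $J_BL_B=L_BJ_B=0$, so the quotient eigenvalues are $0$ and $2n-\mu$ for the nonzero quotient-Laplacian eigenvalues $\mu$. But you never compute those $\mu$. The phrases ``triangular-like after subtracting consecutive class vectors'' and ``$\sum_{j>i}(\cdots)$'' stand in for exactly the data the classification needs.

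Write $S_{\le i}=s_1+\cdots+s_i$, $T_{\ge i}=t_i+\cdots+t_m$ and $t=T_{\ge 1}$. Then the quotient eigenvalues of $D^L$ are $0$, $2n-t-S_{\le i}$ for $1\le i\le m$ (the case $i=m$ gives $n$), and $2n-T_{\ge i}$ for $2\le i\le m$. A derivation that avoids Merris is the union/join recursion. Adding an isolated vertex appends a Laplacian eigenvalue $0$. Adding a dominating vertex to an $N$-vertex graph raises every Laplacian eigenvalue except one $0$ by $1$ and appends $N+1$. Hence $\spec(D^L(G))=\{0\}\cup\{(2n-t)^{(s_1-1)}\}\cup\{(2n-T_{\ge i})^{(s_i)}:2\le i\le m\}\cup\{(2n-t-S_{\le i})^{(t_i)}:1\le i\le m\}$.

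The genuine gap is in the classification. You treat it as bookkeeping of ``accidental equalities'', but the missing idea is that there are none:
\begin{itemize}
\item every isolated-run value satisfies $2n-T_{\ge i}\ge 2n-t$;
\item every dominating-run value satisfies $2n-t-S_{\le i}\le 2n-t-s_1\le 2n-t-1$;
\item each family is strictly monotone in $i$, since all $s_i,t_i\ge1$;
\item $0$ lies below both families.
\end{itemize}
So $G$ has exactly $2m$ distinct $D^L$-eigenvalues if $s_1=1$ and exactly $2m+1$ if $s_1\ge2$, independently of all other run lengths.

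Without this separation you have no argument excluding $m\ge3$. Monotonicity of the dominating family alone gives only $m+1$ values, which is four when $m=3$, so your restriction to $m\le 2$ is unjustified.

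Your proposed collision mechanisms are also wrong. If $t_m=1$, or $s_i=1$ with $i\ge2$, the local eigenvalue disappears but the same value survives as a quotient eigenvalue. Only $s_1=1$ lowers the count, because the initial vertex is then a true twin of the run $1^{t_1}$.

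The correct list is:
\begin{itemize}
\item $m=1$, giving $K_n$ (two values) or $CS(s,t)$ with $s\ge2$ (three values, not ``at most four'');
\item $m=2$ with $s_1=1$, i.e.\ $0\,1^{t_1}\,0^{s_2}\,1^{t_2}$, with exactly four values.
\end{itemize}
Finally, the pineapple graph ($K_p$ with $n-p$ pendant vertices at one clique vertex, $p\ge3$, $n>p$) has creation sequence $0\,1^{p-2}\,0^{n-p}\,1$ and belongs to this last family. The sequence $0^s1^t$ you name is the complete split graph $CS(s,t)$, not the pineapple.
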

	
	\begin{remark}
		These two results are complementary: Lather, Gupta, and Balaji's Theorem~\ref{thm:dlthreshold} is obtained via the general master identity plus the classical Merris theorem (and gives the sharp cospectrality / extremal-coefficient statements), while Megha and Parvathy's equitable-partition route gives an independent, self-contained derivation well suited to classifying graphs with few distinct distance Laplacian eigenvalues directly from the creation sequence.
	\end{remark}
	
	For general (not necessarily threshold) graphs, eigenvalue-free intervals for distance-type matrices of threshold and chain graphs are studied in \cite{AlazemiEtAl2021}, and the broader theory of distance Laplacian spectra — in which threshold graphs appear as a running family of examples throughout — is surveyed comprehensively by Rather and Aouchiche \cite{RatherAouchiche2025survey}, covering results published between 2013 and 2024.
	
	\begin{theorem}[Rather--Aouchiche--Imran--El Hallaoui \cite{RatherEtAl2024}]
		A full ordering of several graph families, including threshold-graph-adjacent constructions, by their distance Laplacian spectral radius is established, together with new bounds relating the distance Laplacian spectral radius to the Wiener index.
	\end{theorem}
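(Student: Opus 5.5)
Because the statement is only loosely specified (``several graph families'' and ``new bounds''), I will prove it in a concrete form. The setting is the class of connected graphs of diameter at most $2$, which contains all connected threshold graphs and the usual ``threshold-adjacent'' constructions: joins, complete split graphs $CS(s,t)$, pineapples, stars with added edges. The plan has two parts: first an ordering by $\partial^L_1$, the largest eigenvalue of $D^L$, then Wiener-index bounds.

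\textbf{Ordering.} The main tool is the master identity, Proposition~\ref{prop:master}. For diameter at most $2$ it gives
\[
\partial^L_1(G) = 2n - a(G),
\]
where $a(G)$ is the algebraic connectivity. Hence ordering a family by $\partial^L_1$ is the same as ordering it in reverse by $a(G)$, so the distance-type problem reduces to a Laplacian one. For threshold graphs, Theorem~\ref{thm:merris} makes $a(G)$ explicit. The nonzero Laplacian eigenvalues are $d_1^*\ge\cdots\ge d_{n-1}^*$, so $a(G)=d_{n-1}^*$, which is the number of dominating vertices (the vertices of degree $n-1$). Therefore
\[
\partial^L_1(G)=2n-\#\{\text{dominating vertices of } G\}.
\]
The ordering within threshold graphs then follows by counting how many entries of each creation sequence are dominating, and ties are broken by the next eigenvalue $2n-d_{n-2}^*$. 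For non-threshold families of diameter $2$, such as joins $G_1\vee G_2$, I would use $a(G_1\vee G_2)=\min\{a(G_1)+n_2,\ a(G_2)+n_1\}$ and compare these explicit values pairwise.

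\textbf{Wiener bounds.} Write $W(G)$ for the Wiener index. Since $\Tr D^L=\sum_u \Tr(u)=2W(G)$ and $0$ is an eigenvalue of $D^L$, averaging over the remaining $n-1$ eigenvalues gives
\[
\partial^L_1 \ge \frac{2W}{n-1}.
\]
Equality holds exactly when all nonzero eigenvalues coincide, which by the master identity means $G=K_n$. For an upper bound, Gershgorin applied to $D^L$ gives $\partial^L_1\le 2\max_u \Tr(u)$. A sharper bound should come from the Laplacian side: for diameter $2$, $W=n(n-1)-|E|$. Combining this with $\partial^L_1=2n-a(G)$ and a known Laplacian inequality for $a(G)$ in terms of $|E|$, for example Fiedler's $a\le \frac{n}{n-1}\delta$ together with $\delta\le 2|E|/n$, yields an inequality in $W$ alone:
\[
\partial^L_1 \ \ge\ 2n-\frac{2|E|}{n-1} \;=\; 2n-\frac{2(n(n-1)-W)}{n-1}.
\]

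\textbf{Main obstacle.} The master identity and Merris's theorem do the heavy lifting, so I expect the difficulty to lie in the families of diameter greater than $2$, if the theorem covers any. There the master identity is unavailable, and I would instead apply quotient-matrix interlacing (the equitable partition of Section~\ref{sec:intro}) to bound $\partial^L_1$ from both sides. The hardest single step is therefore establishing strict separation between adjacent families in the ordering. For that I would first compute the largest root of each small quotient polynomial and compare the roots directly.
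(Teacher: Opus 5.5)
You need to supply an actual argument here, because the survey gives none: it only summarizes \cite{RatherEtAl2024}. Your proposal replaces the statement with one of your own choosing, confined to diameter at most $2$. Within that setting your reductions are correct. The master identity gives $\partial^L_1(G)=2n-a(G)$, and for threshold graphs $a(G)=d^*_{n-1}$ is the number of dominating vertices. But this makes the ``ordering'' a reverse ordering by algebraic connectivity. For threshold graphs it is only a coarse preorder, since the value $2n-k$ depends only on the number $k$ of dominating vertices. Breaking ties by $2n-d^*_{n-2}$ is no longer an ordering by spectral radius. The survey does not say which families \cite{RatherEtAl2024} orders, so your version cannot be matched to the cited result. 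For any family of diameter greater than $2$, where the master identity is unavailable, your plan (``compute the largest roots and compare'') is not yet an argument.

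The Wiener-index part has a concrete error that leaves it with no new content. Your ``sharper'' Laplacian-side inequality is not sharper. Substituting $|E|=n(n-1)-W$ gives
\[
2n-\frac{2|E|}{n-1}=2n-\frac{2n(n-1)-2W}{n-1}=\frac{2W}{n-1},
\]
which is exactly the averaging bound $\partial^L_1\ge 2W/(n-1)$ again. It is also a lower bound, although you introduced it as an improvement on the Gershgorin upper bound. This collapse is forced. Fiedler's $a\le\frac{n}{n-1}\delta$ combined with $\delta\le 2|E|/n$ yields only $a\le 2|E|/(n-1)=\frac{1}{n-1}\sum_{i=1}^{n-1}\mu_i$. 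That says the smallest nonzero Laplacian eigenvalue is at most their average, and under the master identity this is the same as trace-averaging $D^L$.

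What you actually obtain are two known bounds. One is the standard $\partial^L_1\ge 2W/(n-1)$. The other is Gershgorin's $\partial^L_1\le 2\max_u\Tr(u)$, which does not involve $W$. For diameter exactly $2$, where $\delta\le n-2$, it is even weaker than the bound $\partial^L_1<2n$ the master identity gives for free.

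A genuinely new bound needs a different input. For an upper bound on $\partial^L_1$, you could use a lower bound on $a(G)$, such as Fiedler's $a\ge 2\delta-n+2$. For a lower bound that beats the average, you could use a second-moment estimate via $\operatorname{tr}\bigl((D^L)^2\bigr)=\sum_u\Tr(u)^2+\sum_{u\ne v}D(u,v)^2$.
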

	
	\subsection{The distance signless Laplacian spectrum}
	\label{sec:distance-signless}
	
	In marked contrast to the distance Laplacian, the distance signless Laplacian $D^Q(G)=\Tr(G)+D(G)$ of a threshold graph does not inherit a clean closed form from Proposition~\ref{prop:master}-style reasoning, for the same structural reason the ordinary signless Laplacian resists Theorem~\ref{thm:merris}'s approach: $\Tr(G)$ is generally not a multiple of the identity restricted to the relevant invariant subspace, so no master identity analogous to Proposition~\ref{prop:master} is available.
	
	\medskip
	Since $Q(G)$ has no simple closed form, neither does $D^Q(G)$ via any reflection trick; we instead build its spectrum directly from the twin-run equitable partition, exactly as for $D^L$'s underlying combinatorics.
	
	\begin{theorem}\label{thm:DQlocal}
		For each run $i$ with $t_i\ge2$: if $b_i=1$ (dominating-type), $\Tr_i-1$ is an eigenvalue of $D^Q(G)$ with multiplicity $t_i-1$; if $b_i=0$ (isolated-type), $\Tr_i-2$ is an eigenvalue with multiplicity $t_i-1$.
	\end{theorem}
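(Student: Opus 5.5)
The plan is to use the twin-vector mechanism from Section~\ref{sec:intro}, applied directly to $D^Q(G)=\Tr(G)+D(G)$. No quotient matrix is needed, because the local eigenvalues come straight from vectors of the form $e_u-e_v$. Throughout, I read the statement as follows: ``run $i$'' is a maximal block of $t_i$ equal bits of the creation sequence, with bit type $b_i$, and $\Tr_i$ is the common transmission of its vertices.

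\textbf{Step 1: twins are equidistant from everything else and share a transmission.} Let $u,v$ lie in the same run. In a dominating run they are true twins, so $N[u]=N[v]$; in an isolated run they are false twins, so $N(u)=N(v)$. Since $G$ is connected it has diameter at most $2$, and by \eqref{eq:diam2} all distances are determined by adjacency. Hence for every $w\notin\{u,v\}$ we get $D(w,u)=D(w,v)$. The distance between the twins themselves is fixed by the run type:
\begin{itemize}
\item $D(u,v)=1$ in a dominating run, since $u$ and $v$ are adjacent;
\item $D(u,v)=2$ in an isolated run. Here $u,v$ are non-adjacent, and they are not isolated in $G$ because connectivity forces the last run to be dominating, so they have a common neighbour.
\end{itemize}
Summing the distances, $\Tr(u)=\Tr(v)$. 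So $\Tr_i$ is well defined on each run.

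\textbf{Step 2: direct verification of the eigen-equation.} Take $x=e_u-e_v$ and compute $D^Q x$ coordinate by coordinate.
\begin{itemize}
\item At $w\notin\{u,v\}$: $(D^Qx)_w = D(w,u)-D(w,v)=0$, using that $\Tr$ is diagonal.
\item At $u$: $(D^Qx)_u=\Tr_i+D(u,u)-D(u,v)=\Tr_i-D(u,v)$.
\item At $v$: $(D^Qx)_v=D(v,u)-\Tr_i=-(\Tr_i-D(u,v))$.
\end{itemize}
Thus $D^Qx=(\Tr_i-D(u,v))\,x$. By Step 1 the eigenvalue is $\Tr_i-1$ for $b_i=1$ and $\Tr_i-2$ for $b_i=0$.

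\textbf{Step 3: multiplicity.} Fix $u_1$ in the run and form $e_{u_1}-e_{u_j}$ for $j=2,\dots,t_i$. These $t_i-1$ vectors are linearly independent, since each $e_{u_j}$ with $j\ge 2$ appears in exactly one of them. So they span a $(t_i-1)$-dimensional subspace of the eigenspace.

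The only delicate point, and the main obstacle, is the meaning of ``multiplicity $t_i-1$''. The argument gives multiplicity at least $t_i-1$ contributed by run $i$. Equality as a total multiplicity can fail if the same numerical value also arises from another run or from the quotient matrix. I would therefore state the result as: the span of these vectors lies in the eigenspace. To upgrade this, I would note three facts. Vectors from different runs are orthogonal, being supported on disjoint sets. All of them are orthogonal to the characteristic vectors of the run partition. That partition is equitable, so the remaining spectrum is exactly that of the quotient matrix (deferred to Theorem~\ref{thm:DQquotient}). This accounts for all $n$ eigenvalues, with the twin contributions being precisely the stated local eigenvalues with the stated multiplicities.
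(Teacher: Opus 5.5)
Your proposal is correct and is essentially the paper's argument: the twin vector $e_u-e_v$, with $\Tr$ constant on the run and $D(u,v)\in\{1,2\}$, gives $D^Q x=(\Tr_i-D(u,v))x$, and your equitable-partition upgrade of the multiplicity count is exactly Steps 3--4 of the paper's proof of Theorem~\ref{thm:DQquotient}. Your caveat is also well founded: in the paper's own example $(0,1,0,1)$, the local value $\Tr_1-2=4$ is also a quotient eigenvalue, so $4$ has total multiplicity $2>t_1-1$, and ``multiplicity $t_i-1$'' must be read as the contribution of run $i$ rather than the exact total multiplicity.
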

	\begin{proof}
		For twins $u,v$ in the same run, $x=e_u-e_v$ satisfies $\Tr(G)x=\Tr_i x$ (transmission is run-constant) and, since $D(u,v)=1$ for a dominating-type run (adjacent) or $D(u,v)=2$ for isolated-type (non-adjacent, diameter $2$), $D(G)x = \mp x$ or $-2x$ respectively; combining gives the stated eigenvalue.
	\end{proof}
	\begin{theorem}\label{thm:DQquotient}
		The run partition gives $D^Q(G)$ a $k\times k$ quotient matrix $Q_{D^Q}$ with
		\[
		Q_{D^Q}(i,i) = \Tr_i + (t_i-1)\cdot[1\text{ if }b_i{=}1\text{ else }2], \qquad
		Q_{D^Q}(i,j) = t_j\cdot[1\text{ if }b_{\max(i,j)}{=}1\text{ else }2]\ (i\ne j),
		\]
		whose $k$ eigenvalues, together with the $n-k$ local eigenvalues of Theorem \ref{thm:DQlocal}, give the full spectrum of $D^Q(G)$.
	\end{theorem}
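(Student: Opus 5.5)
The plan is to show directly that the run partition $V_1,\dots,V_k$ (the maximal runs of the creation sequence, $|V_i|=t_i$, of type $b_i$) is an equitable partition of $D^Q(G)=\Tr(G)+D(G)$. We then read off the quotient entries, and finally combine the standard equitable-partition lemma with Theorem~\ref{thm:DQlocal} to account for all $n$ eigenvalues.

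\textbf{Step 1: distances between runs.} Since $G$ is connected, the last run is dominating, so $\diam G\le 2$ and \eqref{eq:diam2} holds. Take $u\in V_i$ and $v\in V_j$ with $i\ne j$. In the construction, the vertex added later is adjacent to the earlier one exactly when the later vertex is dominating. Hence $D(u,v)=1$ if $b_{\max(i,j)}=1$, and $D(u,v)=2$ otherwise. This depends only on $(i,j)$.

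Within a run, two distinct vertices are adjacent exactly when the run is dominating. So $D(u,v)=1$ if $b_i=1$ and $D(u,v)=2$ if $b_i=0$.

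The initial vertex needs one check. It is placed in the run formed by the bits that follow it, and it is a twin of those vertices: a true twin if that run is dominating, a false twin if it is isolated. So the rule above applies to it as well.

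It follows that the transmission $\Tr(u)=\sum_v D(u,v)$ is constant on each run; call it $\Tr_i$.

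\textbf{Step 2: equitability and the entries.} For $u\in V_i$, the row sum of $D^Q$ over $V_j$ with $j\ne i$ is $\sum_{v\in V_j}D(u,v)=t_j\cdot[1\text{ if }b_{\max(i,j)}=1\text{ else }2]$. The diagonal matrix $\Tr(G)$ contributes nothing off the diagonal.

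The row sum over $u$'s own run $V_i$ is $\Tr_i+\sum_{v\in V_i\setminus\{u\}}D(u,v)=\Tr_i+(t_i-1)\cdot[1\text{ or }2]$.

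Neither sum depends on the choice of $u\in V_i$, so the partition is equitable and the quotient $Q_{D^Q}$ has exactly the stated entries.

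\textbf{Step 3: the full spectrum.} Let $P$ be the $n\times k$ characteristic matrix of the partition. Equitability gives $D^Q P=P\,Q_{D^Q}$. Consequently each eigenpair $(\lambda,y)$ of $Q_{D^Q}$ yields the eigenvector $Py\neq0$ of $D^Q$.

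Because $D^Q$ is symmetric, $\operatorname{col}(P)$ is $D^Q$-invariant, and so is its orthogonal complement $W$. The space $W$ consists of the vectors summing to zero on each run, and has dimension $n-k$. It is spanned by the differences $e_u-e_v$ with $u,v$ in a common run.

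By Theorem~\ref{thm:DQlocal} these differences are eigenvectors with eigenvalue $\Tr_i-1$ or $\Tr_i-2$. For each run they span a $(t_i-1)$-dimensional space, which gives the $n-k$ local eigenvalues. Since $\mathbb{R}^n=\operatorname{col}(P)\oplus W$, the restriction of $D^Q$ to $\operatorname{col}(P)$ is similar to $Q_{D^Q}$. The multiset union therefore has size $k+(n-k)=n$ and is the whole spectrum.

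\textbf{Main obstacle.} The only delicate point is the case analysis in Step 1, namely that the inter-run distance is governed by $b_{\max(i,j)}$ together with the treatment of the initial vertex. I would verify this first against the nested-neighbourhood description of threshold graphs. After that, everything is the routine equitable-partition argument.
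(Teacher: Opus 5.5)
Your proposal is correct and follows essentially the same route as the paper. Both establish run-constant distances $\delta_{ij}$, deduce equitability $D^QP=PQ_{D^Q}$, and use the invariant splitting $\mathbb{R}^n=\operatorname{col}(P)\oplus\operatorname{col}(P)^\perp$. The only differences are cosmetic: the paper recomputes the action on each $W_i$ directly rather than citing Theorem~\ref{thm:DQlocal}, and it phrases the conclusion as a factorization of the characteristic polynomial, while you add an explicit (harmless) check that the initial vertex is a twin of its run.
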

	\begin{proof}
		Let $R_1,\dots,R_k$ be the runs, $M=D^Q(G)=\Tr(G)+D(G)$, and for $1\le i,j\le k$ put $\delta_{ij}=1$ if $b_{\max(i,j)}=1$ and $\delta_{ij}=2$ otherwise (so $\delta_{ii}=1$ if $b_i=1$ and $2$ if $b_i=0$). Now we have below four cases.
		
		\smallskip\noindent\emph{Step 1: distances depend only on the runs.} We claim $d(u,w)=\delta_{ij}$ for distinct $u\in R_i$, $w\in R_j$ ($i=j$ allowed). Let $i\ne j$, say $i<j$. If $b_j=1$, each vertex of $R_j$ is dominating when added, so $u\sim w$ and $d(u,w)=1$. If $b_j=0$, each $w\in R_j$ is added as an isolated vertex, so $u\not\sim w$; since $G$ is connected, its last run is dominating, and a vertex of it is adjacent to both $u$ and $w$, so $d(u,w)=2$. If $i=j$ and $b_i=1$, the run is a clique and $d=1$; if $b_i=0$, the run is independent and the same later dominating vertex gives $d=2$. In all cases $d(u,w)=\delta_{ij}$. Consequently the transmission is constant on runs,
		\[
		\Tr_i=(t_i-1)\delta_{ii}+\sum_{j\ne i}t_j\delta_{ij},
		\]
		and for $u\in R_i$, $w\in R_j$ we have $M_{uu}=\Tr_i$ and $M_{uw}=\delta_{ij}$ for $u\ne w$.
		
		\smallskip\noindent\emph{Step 2: the partition is equitable.} Let $P$ be the $n\times k$ characteristic matrix of the partition ($P_{u,j}=1$ if $u\in R_j$, else $0$); it has rank $k$. For $u\in R_i$,
		\[
		\sum_{w\in R_j}M_{uw}=
		\begin{cases}\Tr_i+(t_i-1)\delta_{ii}, & j=i,\\ t_j\,\delta_{ij}, & j\ne i,\end{cases}
		\]
		which by Step 1 does not depend on the choice of $u\in R_i$. These are exactly the entries $Q_{D^Q}(i,j)$ of the theorem, and the computation says precisely that $MP=P\,Q_{D^Q}$.
		
		\smallskip\noindent\emph{Step 3: splitting $\mathbb R^n$.} Let $U=\operatorname{col}(P)$, $\dim U=k$. By $MP=PQ_{D^Q}$, $U$ is $M$-invariant and the matrix of $M|_U$ in the basis given by the columns of $P$ is $Q_{D^Q}$. As $M$ is symmetric, $W=U^\perp$ is also $M$-invariant, and $W=\bigoplus_i W_i$ where $W_i$ consists of the vectors supported on $R_i$ with coordinate sum $0$; $\dim W_i=t_i-1$. For $x\in W_i$ and $u\in R_i$,
		\[
		(Mx)_u=\Tr_i\,x_u+\delta_{ii}\sum_{w\in R_i\setminus\{u\}}x_w=(\Tr_i-\delta_{ii})\,x_u,
		\]
		while for $u\in R_j$, $j\ne i$, $(Mx)_u=\delta_{ij}\sum_{w}x_w=0$. Thus $Mx=(\Tr_i-\delta_{ii})x$, which recovers Theorem \ref{thm:DQlocal}.
		
		\smallskip\noindent\emph{Step 4: conclusion.} Since $\mathbb R^n=U\oplus W$ with both summands $M$-invariant,
		\[
		\chi_{M}(x)=\chi_{Q_{D^Q}}(x)\prod_{i=1}^{k}\bigl(x-(\Tr_i-\delta_{ii})\bigr)^{t_i-1}.
		\]
		This identity of characteristic polynomials accounts for multiplicities and does not require $Q_{D^Q}$ to be diagonalizable. Hence $\spec(D^Q(G))$ consists of the $k$ eigenvalues of $Q_{D^Q}$ together with the $n-k$ local eigenvalues, as claimed.
	\end{proof}
	
	\begin{remark}
		$Q_{D^Q}$ is similar to the real symmetric matrix $\operatorname{diag}(\sqrt{t_i})\,Q_{D^Q}\operatorname{diag}(1/\sqrt{t_i})$, so all its eigenvalues are real; its row sums equal $2\Tr_i$. For the creation sequence $(0,1,0,1)$ the runs are $(0,2),(1,1),(0,1),(1,1)$, the transmissions are $(6,5,7,4)$, and
		\[
		Q_{D^Q}=\begin{pmatrix}8&1&2&1\\2&5&2&1\\4&2&7&1\\2&1&1&4\end{pmatrix}
		\]
		has eigenvalues $\approx 3.228,\,4,\,5,\,11.772$; adding the local eigenvalue $\Tr_1-2=4$ gives $\spec(D^Q(G))\approx\{3.228,4,4,5,11.772\}$, in agreement with direct diagonalization.
	\end{remark}

	\begin{remark}
		At the time of writing this draft, we are aware of no published paper giving a closed-form distance signless Laplacian spectrum specifically for threshold graphs, in contrast to the fully resolved distance Laplacian case (Theorem~\ref{thm:dlthreshold}) and the fully resolved ordinary signless Laplacian interlacing property (Theorem~\ref{thm:interlacing}). This appears to be a genuine gap in the literature. The general theory of the distance signless Laplacian — its spectral radius, energy, and bounds for general graphs — is well developed \cite{AouchicheHansen2013,DistanceSignlessGeneral}, and the equitable-partition / quotient-matrix machinery used for the distance Laplacian case (Section~\ref{sec:distance-laplacian}) and for the adjacency, Laplacian, eccentricity, and Seidel matrices elsewhere in this survey applies equally well in principle: the $2m$-block equitable partition gives $D^Q(G)$ an explicit $2m\times 2m$ quotient matrix (local twin eigenvalues $\Tr_i - 2$ or $\Tr_i-1$ according to block type, plus the quotient spectrum), but we have not located a paper that carries this out explicitly and draws out threshold-graph-specific consequences (extremal coefficients, cospectrality, distinct-eigenvalue counts) analogous to Theorem~\ref{thm:dlthreshold}. We list this as Open Problem~\ref{op:dqthreshold} in Section~\ref{sec:open}.
	\end{remark}
	
	General bounds on the distance signless Laplacian spectral radius — applicable to threshold graphs among other classes — include sharp two-sided bounds in terms of transmission sequences \cite{DistanceSignlessBounds}, Nordhaus--Gaddum-type inequalities, and sufficient spectral-radius conditions for the existence of perfect matchings.

	\section{The eccentricity matrix}
	\label{sec:eccentricity}
	
	The eccentricity matrix $\varepsilon(G)$ is obtained from the distance matrix $D(G)$ by retaining, in each row and each column, only the entries equal to the eccentricity of the corresponding vertex pair (the largest distance achieved), and setting all other entries to zero; it is sometimes called the $D_{\max}$-matrix, and can be viewed as a counterpart to the adjacency matrix (which instead retains only the \emph{smallest} nonzero distances).
	
	\begin{theorem}[Qiu--Tang \cite{QiuTang2022}]
		\label{thm:eccthreshold}
		For a threshold graph $\Gamma$ with representation sequence $b=(0^{s_1},1^{t_1},\ldots,0^{s_m},1^{t_m})$, the eccentricity eigenvalues are computed explicitly via a quotient matrix $B_m$ constructed from $b$. Threshold graphs with eccentricity eigenvalues confined to the interval $[-2,0]$ are determined, and an eigenvalue-free interval around $(-2,-1)$ is established.
	\end{theorem}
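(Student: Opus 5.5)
Since the diameter is at most $2$, $\varepsilon(G)$ keeps only the distance-$2$ entries and any entries equal to the eccentricity of both endpoints. Vertices of the last dominating run $1^{t_m}$ have eccentricity $1$; every other vertex has eccentricity $2$. I will follow the equitable-partition route of Theorem~\ref{thm:DQquotient}.

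\textbf{Step 1: the entries of $\varepsilon(\Gamma)$.} Use the run partition $0^{s_1},1^{t_1},\ldots,0^{s_m},1^{t_m}$ and Step~1 of the proof of Theorem~\ref{thm:DQquotient}, which says the distance between distinct vertices of runs $i,j$ depends only on the later run. Assume $\Gamma$ is not complete. For $u,v$ both outside the last run, set $\varepsilon_{uv}=2$ exactly when $d(u,v)=2$. That happens when they lie in the same isolated run, or when the later of their two runs is an isolated run. Every other such entry is $0$. For $u$ in the last run $R$, the entry $\varepsilon_{uv}$ is nonzero only if $d(u,v)=\min(e(u),e(v))$. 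This holds for $v\in R$, giving entries equal to $1$, and fails for every $v\notin R$, since there $d(u,v)=1<2=e(v)$. So $\varepsilon(\Gamma)=(J-I)_{t_m}\oplus 2M$, where $M$ is the $0/1$ "non-adjacency" matrix on the first $n-t_m$ vertices.

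\textbf{Step 2: local eigenvalues and the quotient $B_m$.} The $K_{t_m}$ block gives eigenvalues $t_m-1$ and $-1^{(t_m-1)}$. Inside $2M$, twins in an isolated run of size $s_i$ give $e_u-e_v$ with eigenvalue $-2$, each with multiplicity $s_i-1$. Twins in a dominating run have identical (empty) rows among themselves, so they give eigenvalue $0$, with multiplicity $t_i-1$. The remaining eigenvalues are those of the $(2m-1)\times(2m-1)$ quotient $B_m$, obtained exactly as in Steps 2--4 of the proof of Theorem~\ref{thm:DQquotient}. Its entries are $2\cdot$(size of the target run) whenever the later of the two runs is isolated, and $0$ otherwise.

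\textbf{Step 3: location results.} This is the main obstacle. Symmetrize $B_m$ by the diagonal similarity $\mathrm{diag}(\sqrt{\text{sizes}})$. Then $B_m$ becomes $2\times$ a symmetric matrix whose structure is that of the adjacency matrix of the complement of a threshold graph, which is again threshold. So I would transfer Theorem~\ref{thm:eigloc} and Theorem~\ref{thm:eigfree} to the graph $\overline{\Gamma - R}$, whose spectrum is contained in that of $M$. Note that $\varepsilon$ on the non-last part equals $2A(\overline{\Gamma-R})$, so its eigenvalues are $2\lambda$ with $\lambda$ running over the adjacency eigenvalues of a threshold graph.

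By Theorem~\ref{thm:eigloc} there is then no eigenvalue in $(-2,0)$ apart from the trivial $-2$ and $0$. Combined with the $-1$'s from the clique block, this gives the stated eigenvalue-free region near $(-2,-1)$. The confinement to $[-2,0]$ happens precisely when $\overline{\Gamma-R}$ has all nontrivial eigenvalues in $[-1,0]$ and $t_m=1$. Since a nonzero graph has a positive eigenvalue, the nonzero part of $M$ must be empty. This forces $\Gamma-R$ to be complete, i.e.\ $\Gamma$ complete or nearly so, and I would finish by checking those few cases directly. The delicate point is correctly identifying $\overline{\Gamma-R}$ as threshold with a creation sequence that can be read off from $b$.
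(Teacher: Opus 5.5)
There is a genuine gap in Step~1, and everything after it depends on it. You state the standard criterion: $\varepsilon_{uv}=d(u,v)$ exactly when $d(u,v)=\min(e(u),e(v))$, i.e.\ when $d(u,v)$ equals the eccentricity of \emph{at least one} endpoint. You then apply it as if it were a $\max$. For $u\in R$ and $v\notin R$ you have $d(u,v)=1=\min(1,2)$, so the entry is kept, not zeroed. Your opening sentence (``equal to the eccentricity of both endpoints'') is the $\max$ convention. The survey's proof of Theorem~\ref{thm:lastblock} uses that convention too, but it is not the definition under which Qiu--Tang's results are stated. With the definition you wrote down,
\[
\varepsilon(\Gamma)=\begin{pmatrix} 2A(\overline{\Gamma-R}) & J\\ J & J-I\end{pmatrix}.
\]
So there is no direct sum. $B_m$ is $2m\times 2m$, with the last run coupled to every other run, and the spectrum is not $\{t_m-1,(-1)^{(t_m-1)}\}\cup 2\,\spec(A(\overline{\Gamma-R}))$. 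The smallest case already shows this. For $P_3=CS(2,1)$ one has $\varepsilon(P_3)=D(P_3)$ with spectrum $\{-2,1\pm\sqrt3\}$, while your splitting predicts $\{2,0,-2\}$.

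As a result Step~3 does not go through. Remove the $t_m-1$ local eigenvalues $-1$ coming from twins in $R$. What remains is the spectrum of the bordered matrix
\[
\begin{pmatrix} 2A(H) & \sqrt{t_m}\,\mathbf 1\\ \sqrt{t_m}\,\mathbf 1^{T} & t_m-1\end{pmatrix},\qquad H=\overline{\Gamma-R}.
\]
The point you flag as delicate is actually immediate: $H$ is threshold because the class is hereditary and $\{P_4,C_4,2K_2\}$ is closed under complementation. Theorem~\ref{thm:eigloc} together with Cauchy interlacing now only tells you that \emph{at most one} eigenvalue of this matrix lies in $(-2,0)$. $P_3$ shows that one really can, namely $1-\sqrt3$. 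Proving that this eigenvalue never falls in $(-2,-1)$ is the actual content of the theorem. It needs a direct analysis of $\det(xI-B_m)$, for example a recursion along the creation sequence, or the row reduction to a tridiagonal matrix exploited by Andeli\'c et al. Your proposal does not supply this.

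Likewise, your literal reading of the $[-2,0]$ clause is vacuous. A nonzero nonnegative matrix with trace $0$ has a positive Perron root, so only $K_1$ survives, which is essentially where your argument lands. The intended statement presumably excludes the spectral radius, and then it is a genuine condition that can only be decided from the coupled quotient. For instance, for $CS(s,t)$ with $s\ge2$, all eigenvalues except the largest lie in $[-2,0]$ if and only if $(s-2)(t-2)\le 2$.
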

	
	\begin{theorem}[Andeli\'c--da Fonseca--Koledin--Stani\'c \cite{AndelicEccentricity2022}]
		\label{thm:eccentricity2022}
		The eigenvalue-free interval of Theorem~\ref{thm:eccthreshold} is extended from $(-2,-1)$ to
		\[
		\left(-1-\sqrt2,\,-2\right)\cup(-2,-1),
		\]
		and further still if certain pathological creation sequences are excluded. The proof exploits the fact that the characteristic matrix of the eccentricity quotient matrix of a threshold graph is row-equivalent to a specific tridiagonal matrix, allowing classical tridiagonal eigenvalue-location tools to be brought to bear.
	\end{theorem}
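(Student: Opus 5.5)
The plan is to reduce the eigenvalue-free statement to a sign argument for a tridiagonal determinant, following the strategy indicated in the statement itself.

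\textbf{Step 1: write down $\varepsilon(\Gamma)$ explicitly and separate the trivial eigenvalues.} Since the last run is dominating, its $t_m$ vertices are exactly the vertices of eccentricity $1$; every other vertex has eccentricity $2$.
\begin{itemize}
\item A row indexed by a dominating vertex of the last run has all off-diagonal entries equal to $1$.
\item Two non-dominating vertices $u,w$ have $\varepsilon(u,w)=2$ if they are non-adjacent and $0$ otherwise.
\end{itemize}
Hence, on the non-dominating vertices, $\varepsilon$ is $2\bar A$, where $\bar A$ is the adjacency matrix of the complement of the threshold graph obtained by deleting the last run; this complement is again a threshold graph. The twin argument of Section~\ref{sec:intro} (vectors $e_u-e_v$) then gives the local eigenvalues:
\begin{itemize}
\item $-2$ for twins in isolated runs, which are adjacent with entry $2$ in the complement;
\item $0$ for twins in earlier dominating runs;
\item $-1$ for twins in the last run.
\end{itemize}
All remaining eigenvalues are eigenvalues of the run quotient matrix $B_m$ of Theorem~\ref{thm:eccthreshold}. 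It therefore suffices to show that $\det(B_m-\lambda I)\neq 0$ for every $\lambda\in(-1-\sqrt2,-2)\cup(-2,-1)$.

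\textbf{Step 2: row-reduce $B_m-\lambda I$ to a tridiagonal matrix $T_m(\lambda)$.} Because neighbourhoods are nested, row $i$ of $B_m$ agrees with row $i+1$ except in columns $i$ and $i+1$, up to the constant factors $1$ or $2$. I will subtract suitable multiples of consecutive rows, working from the bottom up. This should leave a tridiagonal $T_m(\lambda)$ whose diagonal entries are affine in $\lambda$ and depend on $s_i,t_i$, and whose off-diagonal entries are constants. I will then expand $\det T_m(\lambda)$ as a continuant, which gives a three-term recursion.

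\textbf{Step 3: a pivot argument.} I will eliminate $T_m(\lambda)$ from the top, in the spirit of the Jacobs--Trevisan diagonalization algorithm. This produces pivots satisfying
\[
d_1=a_1(\lambda),\qquad d_{k}=a_k(\lambda)-\frac{c_{k}}{d_{k-1}},
\]
where the $a_k$ are the diagonal entries and the $c_k>0$ are the products of off-diagonal entries. It is enough to find, for each $\lambda$ in the interval, a set $I(\lambda)\subset\mathbb R\setminus\{0\}$ such that:
\begin{itemize}
\item $d_1\in I(\lambda)$;
\item each map $d\mapsto a_k(\lambda)-c_k/d$ sends $I(\lambda)$ into itself, for all admissible run sizes $s_k,t_k\ge1$.
\end{itemize}
Then no pivot vanishes, so $\det T_m(\lambda)\neq0$.

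On $(-2,-1)$ I expect a one-sided set to work, namely $d<0$ or $d>$ some explicit threshold, recovering the Qiu--Tang interval. On $(-1-\sqrt2,-2)$ the natural invariant set is bounded by a fixed point of the Möbius map with worst-case run sizes $s_k=t_k=1$, i.e.\ the anti-regular pattern. The endpoint $-1-\sqrt2$ should be exactly where this fixed point degenerates, since it is a root of $\lambda^2+2\lambda-1=0$.

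\textbf{Main obstacle.} The hard part is choosing $I(\lambda)$ uniformly in the run sizes. I will first prove that $a_k(\lambda)$ and $c_k$ are monotone in $s_k,t_k$, so that the case of all runs of length one is extremal, and then treat the first and last rows separately. If some short creation sequences, such as $m=1$ or initial runs of length one, break the invariance, those are the ``pathological'' sequences. Excluding them lets $I(\lambda)$ survive on a longer interval, and the same monotonicity argument then gives the further extension.
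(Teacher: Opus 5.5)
Your architecture is the route the paper attributes to Andeli\'c et al.: discard the twin eigenvalues $-2,0,-1$, turn $\det(B_m-\lambda I)$ into a tridiagonal determinant, and show that no pivot vanishes. Step~1 is also correct. Under the standard convention ($\varepsilon_{uv}=d(u,v)$ iff $d(u,v)=\min\{e(u),e(v)\}$), the last run stays coupled to every vertex by entries $1$, so it does not split off as in Theorem~\ref{thm:lastblock}.

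The gap is that all of the theorem's content lies in Steps~2--3, and there you only record expectations, several of them wrong. In the order $S_1,T_1,\dots,S_m,T_m$, consecutive rows of $B_m$ do not differ in just two columns. The rows of $S_i$ and $T_i$ differ in every column indexed by $S_j$ or $T_j$ with $j<i$ (entries $2s_j,2t_j$ versus $0$).

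What works, for $m\ge2$, is to subtract rows of the same type. Let $R_X$ be the row of $B_m-\lambda I$ indexed by class $X$, and take $N_1=R_{S_1}-R_{T_1}$, $N_{2i}=R_{S_i}-R_{S_{i+1}}$ for $1\le i\le m-1$, $N_{2i+1}=R_{T_i}-R_{T_{i+1}}$ for $1\le i\le m-2$, $N_{2m-1}=R_{T_{m-1}}$ and $N_{2m}=R_{S_m}-2R_{T_m}$. This is a constant invertible row transformation, so $\det N=\pm2\det(B_m-\lambda I)$. The matrix $N$ is tridiagonal with diagonal $2s_1-2-\lambda,\,-2t_1,\,2s_2,\dots,-2t_{m-1},\,2s_m,\,2+2\lambda-t_m$. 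The interior diagonal entries do not involve $\lambda$ at all; the $\lambda$-dependence sits off the diagonal. Every off-diagonal product equals $c=-\lambda(\lambda+2)$ except the last, which is $-t_m(\lambda+2)$. So in the reduction that actually arises, your predicted shape is reversed, and your standing assumption $c_k>0$ fails on $(-1-\sqrt2,-2)$, where $c<0$.

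The missing idea is the single estimate that produces the interval. Since $c=1-(\lambda+1)^2$, we have $|c|<1$ on $(-1-\sqrt2,-2)\cup(-2,-1)$, and on the left piece this is exactly the condition $\lambda>-1-\sqrt2$. With it, the set $\{|d|>1\}$ is invariant under every interior pivot map $d\mapsto a-c/d$ with $a\in\{2s_{i+1},-2t_i\}$, whatever the sign of $c$ and the run sizes: the new pivot has the sign of $a$ and modulus $>|a|-1\ge1$. No monotonicity lemma is needed, since $c$ does not depend on the run sizes at all.

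The corner rows are then easy:
\begin{itemize}
\item $d_1=2s_1-2-\lambda\ge-\lambda>1$, and $d_{2m-1}>1$ by invariance.
\item $d_{2m}=2+2\lambda-t_m+t_m(2+\lambda)/d_{2m-1}$ is below $(2+t_m)(1+\lambda)<0$ on $(-2,-1)$, and below $2+2\lambda-t_m<0$ on $(-1-\sqrt2,-2)$.
\end{itemize}
The case $m=1$ needs a separate line, because the reduction uses $T_1\ne T_m$. For $s\ge2$ the quotient polynomial $\lambda^2-(2s+t-3)\lambda+st-2s-2t+2$ is decreasing on $(-\infty,-1]$ with value $t(s-1)>0$ at $-1$; for $s=1$ the graph is $K_n$.

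This also corrects your guess about the ``pathological'' cases. Neither $m=1$ nor a short first run causes any trouble. What destroys invariance below $-1-\sqrt2$ is runs of length one among $T_1,S_2,\dots,T_{m-1},S_m$. This matches your fixed-point heuristic: for $\lambda<-2$ and $s_k=t_k=1$, the two-step pivot map has fixed points $1\pm\sqrt{1-|c|}$, which disappear once $|c|>1$.

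As written, your proposal identifies the endpoint correctly but does not prove the theorem. The ``further extension'' part of the statement is addressed only heuristically.
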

	
	This eigenvalue-free-interval phenomenon — a small interval around the trivial local eigenvalue(s) that is provably empty of non-trivial spectrum — recurs for the adjacency matrix (Theorem~\ref{thm:eigfree}) and the Seidel matrix (Section~\ref{sec:seidel}) of threshold graphs as well, and appears to be a structural signature of the twin-class/quotient-matrix reduction common to all of these matrices.
	
	\section{The Seidel spectrum}
	\label{sec:seidel}
	
	The Seidel matrix $S(G) = J-I-2A(G)$ gives a $\pm1$-valued encoding of adjacency, closely related to two-graphs and switching classes. A system of lines $\mathcal{L}$ in the Euclidean $d-$dimensional space $\mathbb{R}^d$ is said to be equiangular if all the lines meet at a point and have pairwise equal angle. The constant angle is called the common angle of the system. There is a closed correspondence between the equiangular line and $(-1,0,1)-$Seidel matrix. Seidel and Lint \cite{LS} first notice this important relationship around $1966$. Later Greaves et al. \cite{Greaves,GKMS,GSY} develope several significant results in the theory of equiangular lines in connection with Seidel matrix. In fact, Seidel matrix plays a key role to study the structure of equiangular lines in space geometry.
	
	\begin{theorem}[Del-Vecchio--Abd\'on \cite{DelVecchioAbdon2022}]
		\label{thm:seidel2022}
		The number of distinct Seidel eigenvalues of a threshold graph is determined directly from its binary representation, and families of \emph{Seidel-integral} threshold graphs (all Seidel eigenvalues integers) are constructed explicitly.
	\end{theorem}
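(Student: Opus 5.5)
We work with the same $2m$-class run partition $0^{s_1}1^{t_1}\cdots 0^{s_m}1^{t_m}$ used for $D^Q$ in Theorem~\ref{thm:DQquotient}. The starting observation is that for a connected threshold graph, $S(G)=J-I-2A(G)$ has off-diagonal entries depending only on the pair of runs: $-1$ if the later run is dominating, $+1$ if it is isolated.

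\textbf{Step 1: local (twin) eigenvalues.} For twins $u,v$ in a common run, the vector $e_u-e_v$ is an eigenvector of $S(G)$. In an isolated run the twins are non-adjacent, so $S_{uv}=1$ and the eigenvalue is $-1$, with multiplicity $s_i-1$. In a dominating run they are adjacent, so $S_{uv}=-1$ and the eigenvalue is $+1$, with multiplicity $t_i-1$.

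\textbf{Step 2: the quotient.} Repeating Steps 2--4 of the proof of Theorem~\ref{thm:DQquotient} verbatim shows that
\[
\chi_{S}(x)=\chi_{Q_S}(x)\,(x+1)^{s-m}(x-1)^{t-m},
\]
where $Q_S$ is the $2m\times 2m$ quotient matrix. Its diagonal entries are $s_i-1$ on an isolated class and $-(t_i-1)$ on a dominating class. Its off-diagonal entries are $\pm(\text{size of target class})$, with the sign fixed by which class comes later.

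\textbf{Step 3: counting distinct eigenvalues.} The number of distinct Seidel eigenvalues equals the number of distinct eigenvalues of $Q_S$, together with $-1$ if some $s_i\ge2$ and $+1$ if some $t_i\ge2$, after removing coincidences. To count these, I would convert $xI-Q_S$ by row operations into a tridiagonal (or bordered) form, in the manner of Theorem~\ref{thm:eccentricity2022}. Differences of consecutive rows cancel the "later-run" sign pattern, which should give a three-term recurrence for $\chi_{Q_S}$ driven by the run lengths. From this recurrence I would show the following.
\begin{enumerate}
\item The quotient eigenvalues are simple. The tridiagonal form should be irreducible, with nonzero off-diagonal products.
\item The multiplicities of $\pm1$ among the quotient eigenvalues can be read off from whether $\chi_{Q_S}(\pm1)=0$. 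Evaluated by the recurrence, this becomes an explicit arithmetic condition on the $s_i,t_i$.
\end{enumerate}
This yields the count in terms of the binary string.

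\textbf{Step 4: integral families.} For $m=1$, i.e.\ the complete split graph, $Q_S$ is $2\times2$. Its discriminant is a quadratic form in $(s,t)$, and Seidel-integrality reduces to a Pell-type perfect-square condition with infinitely many solutions. A second route is to choose families for which $Q_S$ has rank one plus a scalar. These give explicit infinite integral families, which may be extended to larger $m$ by checking that the recurrence factors over $\mathbb{Z}$.

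\textbf{Main obstacle.} The hard part is Step 3, specifically deciding exactly when $\pm1$ occurs as a quotient eigenvalue, since this governs the distinct-eigenvalue count. I would first attempt to evaluate the recurrence at $x=\pm1$ directly, because there it should telescope.
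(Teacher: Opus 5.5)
The survey only cites this result, so there is no in-paper proof to compare against. Your Steps 1--2 are correct and are exactly the mechanism of Theorem~\ref{thm:DQquotient}. Concretely, $Q_S=\Sigma N-\operatorname{diag}(\sigma)$, where $\Sigma_{ij}=\sigma_{\max(i,j)}$, $N=\operatorname{diag}(n_i)$, and $\sigma_i=\pm1$ for isolated/dominating classes. Consecutive-row differences do tridiagonalize $xI-Q_S$.

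\textbf{The gap is in Step 3(1).} Carry out the reduction: take first differences, divide row $i$ by $\sigma_i-\sigma_{i+1}=2\sigma_i$, then take second differences. The reduced matrix $T(x)$ then has:
middle rows $\frac{x-\sigma_i}{2\sigma_i}(e_{i-1}-e_{i+1})-n_ie_i$;
first row $(\frac{x+1}{2}-s_1)e_1-\frac{x-1}{2}e_2$;
last row $\frac{x+1}{2}e_{2m-1}+(\frac{x-1}{2}+t_m)e_{2m}$.
Moreover $\det T=\pm\chi_{Q_S}/2^{2m-1}$.

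The off-diagonal entries vanish exactly at $x=\pm1$. This is forced, not an accident of the reduction: if a constant $P$ made $P(xI-Q_S)$ tridiagonal with $x$-free off-diagonal entries, $P$ would have to be diagonal, and then $Q_S$ would already be tridiagonal, which fails for $m\ge2$.

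Note also that the Jacobi-matrix argument does not apply, since $T(x)$ is not of the form $xI-T_0$. Argue via rank instead, and use that $Q_S$ is similar to the symmetric matrix $N^{1/2}\Sigma N^{1/2}-\operatorname{diag}(\sigma)$. This gives simplicity only for quotient eigenvalues $\neq\pm1$.

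Your count, however, is $2m-\operatorname{mult}_{Q_S}(1)-\operatorname{mult}_{Q_S}(-1)$ plus the contributions of $\pm1$. Step 3(2) only tests whether $\chi_{Q_S}(\pm1)=0$. It therefore tacitly assumes a simplicity that does not follow at precisely these two points.

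\textbf{Repairing the gap.} Your ``telescoping'' idea fixes this once it is done explicitly.
At $x=1$, every isolated-class row of $T$ collapses to $-n_ie_i$, and row $1$ becomes $(1-s_1)e_1$.
At $x=-1$, every dominating-class row collapses to $-n_ie_i$, and row $2m$ becomes $(t_m-1)e_{2m}$.
Hence $\chi_{Q_S}(1)$ is $(1-s_1)$ times a nonzero product, and $\chi_{Q_S}(-1)$ is $(t_m-1)$ times one. In the vanishing case the other $2m-1$ rows are visibly independent, so the rank is $2m-1$.

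Thus $+1\in\spec(Q_S)$ iff $s_1=1$, and $-1\in\spec(Q_S)$ iff $t_m=1$, each simple. The number of distinct Seidel eigenvalues is $2m+\eta_-+\eta_+$, where $\eta_-=1$ iff $t_m\ge2$ and some $s_i\ge2$, and $\eta_+=1$ iff $s_1\ge2$ and some $t_i\ge2$ (otherwise $0$). This is consistent with Theorem~\ref{thm:seidelthreshold}: two eigenvalues only for $K_n$ and $K_{1,n-1}$, and never three.

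\textbf{Step 4.} The ``scalar plus rank one'' route is empty: simplicity gives $\operatorname{rank}(Q_S-cI)\ge 2m-1$ for every $c$, so it never occurs for $m\ge2$. ``Check that the recurrence factors'' is not a construction.

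The $m=1$ family suffices, but you should exhibit it. Write $\Delta_S=(s+t-2)^2+4st$. Taking $s=t$ gives $4(2s^2-2s+1)$, so integrality becomes the negative Pell equation $(2s-1)^2-2y^2=-1$. This yields $s=4,21,120,\dots$ with $\spec(S(CS(s,s)))=\{\pm y,(\pm1)^{(s-1)}\}$. The parity condition is automatic since $\Delta_S\equiv(s+t)^2 \pmod 4$.
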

	
	Building on this, a more detailed structural and cospectrality analysis was carried out shortly after:
	
	\begin{theorem}[Mandal--Mehatari \cite{MandalMehatari2024}]
		\label{thm:seidelthreshold}
		For a connected threshold graph $G$, the Seidel characteristic polynomial and $\det S(G)$ are computed explicitly from the creation sequence via the Seidel quotient matrix. Closed formulas are given for the multiplicity of the (trivial) eigenvalues $\pm1$. Threshold graphs with at most five distinct Seidel eigenvalues are classified completely: $G$ has exactly two distinct Seidel eigenvalues if and only if $G$ is the complete graph $K_n$ or the star $K_{1,n-1}$, and \emph{no} threshold graph has exactly three distinct Seidel eigenvalues. Finally, explicit pairs of non-isomorphic threshold graphs that are Seidel-cospectral are constructed, showing threshold graphs are not determined by their Seidel spectrum in general.
	\end{theorem}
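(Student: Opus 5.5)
The plan is to run the twin-class / equitable-partition machinery of Section~\ref{sec:intro} for $S(G)=J-I-2A(G)$, in the same way Theorem~\ref{thm:DQquotient} was proved for $D^Q$. First, for twins $u,v$ in a dominating run we have $S_{uv}=-1$, so $x=e_u-e_v$ satisfies $Sx=x$. For twins in an isolated run we have $S_{uv}=+1$, so $Sx=-x$. This gives the local eigenvalues
\[
1 \text{ with multiplicity } \textstyle\sum_i (t_i-1), \qquad -1 \text{ with multiplicity } \textstyle\sum_i (s_i-1).
\]
Next, the run partition into $2m$ classes is equitable for $S$. For distinct classes $X$ (earlier) and $Y$ (later), the entries between them equal $-1$ if $Y$ is a dominating run and $+1$ if $Y$ is isolated. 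The diagonal quotient entries are $-(t_i-1)$ for dominating runs and $s_i-1$ for isolated runs. Exactly as in Step~4 of the proof of Theorem~\ref{thm:DQquotient}, this yields
\[
\chi_S(x)=\chi_{Q_S}(x)\,(x-1)^{\sum(t_i-1)}(x+1)^{\sum(s_i-1)}.
\]

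To make $\chi_{Q_S}$ and $\det S(G)$ explicit, I would subtract consecutive rows (and columns) of $xI-Q_S$. Because the entries depend only on the type of the later class, this reduces the matrix to an almost tridiagonal form, as in the technique behind Theorem~\ref{thm:eccentricity2022}. Expanding along the last row then gives a three-term recursion in the run lengths, from which $\chi_{Q_S}$ is computed from the creation sequence. Evaluating at $x=0$ gives $\det S(G)$ as $\det Q_S$ times $(-1)^{\sum(s_i-1)}$. Evaluating the same recursion at $x=\pm1$ decides whether $\pm1$ is also a quotient eigenvalue, and with what multiplicity, which produces the closed formulas for the multiplicities of $\pm1$.

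For the classification, I would first treat $m=1$, the complete split graphs $0^s1^t$, where $Q_S$ is $2\times2$. For $K_n$ (where $s=1$) this gives quotient eigenvalues $\{1,1-n\}$; for the star (where $t=1$) it gives $\{n-1,-1\}$. In both cases they merge with the local eigenvalues, so there are exactly two distinct Seidel eigenvalues. For other $s,t$ the quotient eigenvalues avoid $\pm1$, giving three or four distinct values, and I would check which. For $m\ge2$, the goal is to show from the recursion, used as a Sturm sequence, that the quotient eigenvalues not equal to $\pm1$ are simple and numerous enough. Combined with the constraints $\Tr S=0$ and $\Tr S^2=n(n-1)$, this should force at least four distinct eigenvalues, and then enumerate the finitely many run patterns giving four or five. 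Ruling out exactly three distinct eigenvalues is the main obstacle. I would attack it by showing that three eigenvalues would require $Q_S$ to have a single eigenvalue outside $\{\pm1\}$ of multiplicity at least one, and then contradicting the trace identities or the simplicity coming from the Sturm sign changes.

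Finally, for the Seidel-cospectral pairs I would use the product formula for $\chi_S$. I would search small creation sequences, with $m=2$ or $3$, for non-isomorphic pairs with equal $\chi_{Q_S}$ and equal local multiplicities $\sum(t_i-1)$ and $\sum(s_i-1)$. Seidel switching, which preserves the spectrum, can guide the search. I would then verify such a pair symbolically to obtain an explicit infinite family.
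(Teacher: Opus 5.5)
The survey states this result without proof; it says only that Mandal--Mehatari work through the Seidel quotient matrix. So your plan has to stand on its own, and its first part does. The local eigenvalues, the factorization $\chi_S=\chi_{Q_S}(x-1)^{\sum(t_i-1)}(x+1)^{\sum(s_i-1)}$, and $\det S(G)=(-1)^{\sum(s_i-1)}\det Q_S$ are all correct.

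The genuine gap is that everything after that hinges on how many of the $2m$ eigenvalues of $Q_S$ equal $\pm1$. That includes the multiplicity formulas, the step ``simple and numerous enough'', and the ``finitely many run patterns''. Your tools do not reach this count. Evaluating the recursion at $x=\pm1$ detects a root, not its multiplicity, and the Sturm argument breaks down exactly at $\pm1$.

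Concretely, index the classes $S_1,T_1,\dots,S_m,T_m$ as $1,\dots,2m$. Let $n_i$ be their sizes, and let $\sigma_i=1$ on $S$-classes and $\sigma_i=-1$ on $T$-classes. Then $Q_S=CN-\Sigma$ with $C_{ij}=\sigma_{\max(i,j)}$, $N=\operatorname{diag}(n_i)$, $\Sigma=\operatorname{diag}(\sigma_i)$, and
$Q_S-\theta I=C\bigl(N-C^{-1}(\Sigma+\theta I)\bigr)$. Here $C^{-1}$ is tridiagonal with diagonal $(\tfrac12,0,\dots,0,-\tfrac12)$ and off-diagonal entries $\pm\tfrac12$. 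The $(i,i+1)$ and $(i+1,i)$ entries of $C^{-1}(\Sigma+\theta I)$ are nonzero multiples of $\sigma_{i+1}+\theta$ and $\sigma_i+\theta$. This gives the simplicity you want for $\theta\ne\pm1$. At $\theta=\pm1$, however, one entry of every such pair vanishes. So nothing in your plan stops $\pm1$ from absorbing a number of quotient eigenvalues that grows with $m$, and then no bound on $m$ follows.

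The missing lemma is this: $1\in\spec(Q_S)$ if and only if $s_1=1$, and $-1\in\spec(Q_S)$ if and only if $t_m=1$, each with multiplicity one.

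To see it at $\theta=1$, note that the $T$-columns of $N-C^{-1}(\Sigma+I)$ are just $n_je_j$. So each $S$-row reads $(n_j-2(C^{-1})_{jj})x_j=0$. This forces $x_j=0$, except that $x_1$ is free when $s_1=1$, and the $T$-rows then determine the other coordinates. The case $\theta=-1$ is the mirror image, using $(C^{-1})_{2m,2m}=-\tfrac12$. Intuitively, $s_1=1$ makes the initial vertex a true twin of $T_1$. Likewise $t_m=1$ means that after switching at the last vertex, it and all of $S_m$ become isolated, hence false twins.

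Writing $[P]$ for the indicator of $P$, the multiplicities of $1$ and $-1$ in $S(G)$ are $\sum(t_i-1)+[s_1=1]$ and $\sum(s_i-1)+[t_m=1]$. Since the remaining eigenvalues are simple, the number of distinct Seidel eigenvalues is
$2m-[s_1=1]-[t_m=1]+[1\in\spec(S)]+[-1\in\spec(S)]\ge 2m$. This immediately gives three consequences:
\begin{itemize}
\item at most five distinct eigenvalues forces $m\le2$;
\item $m=1$ gives exactly $2$ distinct eigenvalues (iff $s_1=1$ or $t_1=1$) or exactly $4$, so three never occurs;
\item what remains is the finite case analysis you intended.
\end{itemize}

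Two smaller points. First, your reduction of the three-eigenvalue case overlooks the configuration with only one of $\pm1$ present and two simple nontrivial eigenvalues. The trace identities do kill it, since they force that pair to be $\{\pm1,\mp(n-1)\}$. Second, the cospectral pairs need no search. Switching $CS(s,t)=0^s1^t$ at $s-1$ of its independent vertices yields $0\,1^{t-1}0^{s-1}1$. For $s,t\ge2$ these are non-isomorphic, because their edge counts differ by $(s-1)(t-1)$. This is consistent with the survey's remark that the Mandal--Mehatari pairs lie in the same switching class.
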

	
	The collection of all graphs that can be obtained from $G$ by switching on every possible subset of $V$ is called the switching class of $G$. Two graphs $G$ and $H$ are called the switching equivalent (or simply equivalent) if they belong to the same switching class. Otherwise, we call them simply inequivalent. If two graphs are equivalent (res. inequivalent) then their Seidel matrices are said to be equivalent (res. inequivalent). Equivalent Seidel matrices are cospectral, but starting from $n=8$ examples of inequivalent cospectral Seidel matrices (of order $n$) appear. That means there does not exist any cospectral graphs (which are not switching equivalent) of order $n\leq7$. Sz\"oll\"osi et al. \cite{SO} tabulated the number of inequivalent cospectral Seidel matrices  of order $n$ where $n\leq13$. 
	Although Mandal et al. \cite{MandalMehatari2024} find the cospectral pairs, but they belong to the same switching class.\\In spirit of these works, Xiong et al. derive an open interval that contains no Seidel eigenvalue (non-trivial) of a threshold graph.

	\begin{theorem}[Xiong--Hou \cite{XiongHou2022}]
		\label{thm:seideleigfree}
		The open interval $(-\sqrt2,\sqrt2)$ contains no Seidel eigenvalue of any threshold graph, with the sole exceptions of $\pm1$ themselves (which are trivial, twin-class eigenvalues). The inertia of the Seidel matrix is likewise computed directly from the creation sequence.
	\end{theorem}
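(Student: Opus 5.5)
The plan is a recursive Schur-complement (inertia-counting) argument along the creation sequence. It rests on the fact that each new vertex sees all earlier vertices with the same Seidel entry, namely $+1$ if it was added isolated and $-1$ if added dominating.

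\textbf{Step 1 (trivial eigenvalues).} Twins give eigenvalues $\pm1$ directly. If $u,v$ are true twins (same dominating run), then $S_{uv}=-1$ and $S_{uw}=S_{vw}$ for $w\ne u,v$, so $x=e_u-e_v$ satisfies $Sx=x$. If $u,v$ are false twins (same isolated run), then $S_{uv}=+1$ and $Sx=-x$. Hence every eigenvalue that can have multiplicity is $\pm1$. The partition into the $2m$ runs is equitable for $S$, exactly as in Step 2 of the proof of Theorem~\ref{thm:DQquotient}, so the remaining eigenvalues are those of a $2m\times 2m$ quotient. It therefore suffices to show that $S$ has no eigenvalue in $(-\sqrt2,\sqrt2)\setminus\{\pm1\}$.

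\textbf{Step 2 (recursion).} Order the vertices by creation, $v_1,\dots,v_n$, and let $S_k$ be the Seidel matrix of the graph induced on $v_1,\dots,v_k$. With $\sigma_k=+1$ if $b_k=0$ and $\sigma_k=-1$ if $b_k=1$,
\[
S_k=\begin{pmatrix} S_{k-1} & \sigma_k\mathbf 1\\ \sigma_k\mathbf 1^{T} & 0\end{pmatrix}.
\]
For $x$ not in $\spec(S_{k-1})$, the Schur complement gives
\[
\det(S_k-xI)=\det(S_{k-1}-xI)\,\bigl(-x-f_{k-1}(x)\bigr),\qquad f_{k}(x):=\mathbf 1^{T}(S_{k}-xI)^{-1}\mathbf 1 .
\]
The block-inverse formula then gives a recursion $f_k=\Phi_{\sigma_k}(x,f_{k-1})$, which is a Möbius-type map in $f_{k-1}$. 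By Sylvester's law of inertia, the number of eigenvalues of $S_n$ below $x$ equals the number of negative pivots $d_k(x)=-x-f_{k-1}(x)$. Computing this count from the bits also yields the inertia statement once we take $x=0$, or more precisely $x$ slightly off $0$, since $0$ is not an eigenvalue.

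\textbf{Step 3 (invariant region; main obstacle).} Fix $x\in(-\sqrt2,\sqrt2)\setminus\{\pm1\}$. I expect to prove by induction on $k$ that $f_k(x)$ stays in an explicit interval $I(x)$, or a union of two intervals depending on $\sigma_k$, which is mapped into itself by both $\Phi_{+1}$ and $\Phi_{-1}$. The interval should be chosen so that the pivot $-x-f_{k-1}(x)$ never vanishes and its sign depends only on $\sigma_k$ (and on $\sigma_{k-1}$), not on $x$. Consequently the count of eigenvalues below $x$ is constant on each of $(-\sqrt2,-1)$, $(-1,1)$ and $(1,\sqrt2)$. This rules out eigenvalues in the interior of these intervals. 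At the points $x=\pm1$ themselves, only the local eigenvalues from Step 1 appear, because the pivot count jumps there by exactly the number of twin eigenvalues. The hardest part is guessing the right invariant region. The threshold $\sqrt2$ should come from the fixed points of the composed map $\Phi_{+1}\circ\Phi_{-1}$, which correspond to the alternating pattern $0101\cdots$ of $H_n$. Those fixed points solve a quadratic whose discriminant degenerates exactly at $x^2=2$, so I would first compute these fixed points and take $I(x)$ to be the interval between them. I would then check the base case $k=2$ and that the whole sequence of pivots avoids the singular values $\pm1$.
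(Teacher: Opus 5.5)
The survey gives no proof of this result; it only cites Xiong--Hou. So your plan has to stand on its own, and it does not yet: there is a genuine gap exactly where the content of the theorem lies.

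\textbf{The gap.} Step~2 is correct. Your Schur complement gives explicitly $\Phi_{\sigma}(x,f)=\frac{(x+2\sigma)f-1}{f+x}$, with $k$-th pivot $-x-f_{k-1}$. But this only rephrases the claim: ``no pivot vanishes for $x\in(-\sqrt2,\sqrt2)\setminus\{\pm1\}$'' \emph{is} the theorem. Step~3, which should prove it, never exhibits or verifies a region.

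The one concrete candidate you propose does not work. At $x=\tfrac{1}{2}$ the fixed points of $\Phi_{+1}\circ\Phi_{-1}$ are $2\pm\sqrt7$. The arc $[2-\sqrt7,\,2+\sqrt7]$ contains $f=-x$, where the next pivot is $0$. And $\Phi_{-1}(2-\sqrt7)\approx 0.22$ falls outside the complementary arc. Those fixed points only describe strictly alternating orbits, whereas the region must be stable under each map separately.

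Three side claims also need correcting, though you do not need the first two:
\begin{itemize}
\item Twin vectors do not show that only $\pm1$ can be multiple.
\item The jump at $\pm1$ is not just the twin count. For $K_{1,n-1}$ the eigenvalue $-1$ has multiplicity $n-1$, but there are only $n-2$ twin vectors; the quotient contributes another $-1$.
\item You need $n\ge2$, since $S(K_1)=(0)$.
\end{itemize}

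\textbf{How to close it.} The framework does work if you build the regions from the fixed points of the \emph{individual} maps. Put $g_k=f_k+x$. Then $g_0=x$, $g_1=x-1/x$, the $k$-th pivot is $-g_{k-1}$, and $g_k=T_{\sigma_k}(g_{k-1})$ with $T_{\pm}(g)=2(x\pm1)-(x\pm1)^2/g$. Each $T_\pm$ is a parabolic map fixing $x\pm1$.

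For $0<|x|<1$, $T_+$ sends $(-\infty,x-1]\cup[x+1,\infty)$ (that is, $|f|\ge1$) into $[x+1,\infty)$, and $T_-$ sends it into $(-\infty,x-1]$.

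For $1<x\le\sqrt2$, take $N=(-\infty,-1]$, $M=[x-1,\,x^2-1]$, $P=[x+1,\infty)$. Then $T_-(N\cup M\cup P)\subseteq M$, $T_+(N\cup P)\subseteq P$ and $T_+(M)\subseteq N$. The last inclusion amounts to $T_+(x^2-1)=\frac{(x+1)(2x-3)}{x-1}\le-1$, i.e.\ to $x^2\le2$; this is where $\sqrt2$ really enters.

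In both regimes $g_1$ lies in the region and $0$ does not. So no pivot vanishes, and for $k\ge2$ the sign of $g_k$ is fixed by the bits. The remaining cases follow quickly:
\begin{itemize}
\item The point $x=0$ is not an eigenvalue, because the eigenvalue count is the same on both sides of it.
\item The interval $(-\sqrt2,-1)$ follows from $S(\overline{G})=-S(G)$, since $\overline{G}$ is again threshold.
\end{itemize}

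The count for $0<x<1$ gives the inertia. Writing $b_k$ for the bit of the $k$-th vertex,
$n_-(S)=1+\#\{2\le k\le n-1:\ b_k=0\}$, $n_0(S)=0$ and $n_+(S)=n-n_-(S)$.
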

	
	\section{The normalized adjacency and normalized Laplacian spectrum}
	\label{sec:normalized}
	
	The normalized adjacency matrix $\mathcal{A}(G) = \Deg(G)^{-1/2}A(G)\Deg(G)^{-1/2}$ is related to the normalized (random-walk) Laplacian by $\mathcal{L}(G) = I - \mathcal{A}(G)$; its eigenvalues are the \emph{normalized eigenvalues} of $G$.
	
	\begin{theorem}[Banerjee--Mehatari \cite{BanerjeeMehatari2017}]
		\label{thm:normalizedthreshold}
		The normalized eigenvalues and normalized adjacency energy of a connected threshold graph are computed from its normalized equitable-partition quotient matrix. The normalized adjacency energy is shown to equal the normalized Laplacian (Randi\'c) energy of the graph. Threshold graphs with at most five distinct normalized eigenvalues are characterized explicitly.
	\end{theorem}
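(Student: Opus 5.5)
The plan is to use the same twin-class and equitable-partition mechanism that was used for $D^Q$ in Theorem~\ref{thm:DQquotient}, now applied to $\mathcal{A}(G)=\Deg(G)^{-1/2}A(G)\Deg(G)^{-1/2}$.

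\textbf{Step 1: the run partition.} Vertices in the same run have the same degree. So $\Deg^{-1/2}$ is constant on each run, and the $2m$-class run partition $R_1,\dots,R_{2m}$ is equitable for $\mathcal{A}(G)$. For $u\in R_i$, the row sum of $\mathcal{A}$ into $R_j$ is
\[
|N(u)\cap R_j|\,(d_i d_j)^{-1/2},
\]
where $d_i$ is the common degree on $R_i$. The count $|N(u)\cap R_j|$ depends only on $(i,j)$: it is $|R_j|$ or $0$ by the creation order for $i\ne j$, and $|R_i|-1$ or $0$ for $i=j$.

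\textbf{Step 2: local and quotient eigenvalues.} As in Steps 3--4 of the proof of Theorem~\ref{thm:DQquotient}, the vectors supported on one run with zero coordinate sum are eigenvectors.
\begin{itemize}
\item In an isolated run the eigenvalue is $0$.
\item In a dominating run it is $-1/d_i$, since $x_u$ picks up $-x_u$ from the clique and is scaled by $1/d_i$.
\end{itemize}
The remaining $2m$ eigenvalues are those of the quotient matrix. Symmetrizing it by $\operatorname{diag}(\sqrt{|R_i|})$ shows these eigenvalues are real. The characteristic polynomial then factors as $\chi_{\text{quot}}\prod_i(\text{local factor})^{|R_i|-1}$.

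\textbf{Step 3: energy identity.} Write the energy of $\mathcal{A}$ as $\sum|\lambda_i|$ and the normalized Laplacian energy as $\sum|\mu_i-1|$ with $\mu_i=1-\lambda_i$. Then $|\mu_i-1|=|\lambda_i|$ termwise, so the two energies coincide. This is immediate and independent of threshold structure, since the average normalized Laplacian eigenvalue is $\operatorname{tr}\mathcal{L}/n=1$.

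\textbf{Step 4: at most five distinct eigenvalues (the main obstacle).}
\begin{itemize}
\item \emph{Counting the local eigenvalues.} There is $0$ if some isolated run has size at least $2$. There are the values $-1/d_i$ for dominating runs of size at least $2$, and these are distinct across runs because degrees strictly increase along dominating runs.
\item \emph{Counting the quotient eigenvalues.} The eigenvalue $1$ is simple (the graph is connected). I would first show the quotient eigenvalues other than $1$ are simple and differ from the local values, via a tridiagonal reduction in the spirit of \cite{AndelicEtAl2020}. Then the total count is at least $2m$ plus the number of distinct local values.
\item \emph{Case analysis.} This bound forces $m\le2$, leaving a finite list of run-size patterns. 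For these I would compute the $2\times2$ or $4\times4$ quotient characteristic polynomials explicitly, e.g.\ for the complete split graphs $CS(s,t)$ and for $m=2$ with small runs, and read off when coincidences reduce the count to $\le5$.
\end{itemize}
The delicate point is proving the quotient spectrum is simple and avoids the local values. Failing a general argument, I would fall back on direct discriminant and resultant computations in the $m\le2$ cases.
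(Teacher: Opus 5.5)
The survey only cites this result, so there is no in-paper proof to compare with. Your Steps 1--3 are correct; Step 3 is just $\mathcal{L}=I-\mathcal{A}$. The gap is in Step 4, and it is worse than an unproved step: the lemma you plan to prove is false. That lemma says the quotient eigenvalues other than $1$ are simple and differ from the local values.

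\emph{Failure when $s_1=1$.} Under the survey's convention $s_1\ge 1$, if $s_1=1$ the initial vertex is a true twin of $T_1$, so $-1/d_{T_1}$ is always a quotient eigenvalue. For $K_n=0\,1^{n-1}$ with $n\ge3$, the quotient eigenvalues are $1$ and $-1/(n-1)$, and the second equals the local eigenvalue of $T_1$. Your count ``$2m$ plus the number of local values'' would give $3$, whereas $K_n$ has $2$ distinct eigenvalues.

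\emph{Failure when $s_1\ge2$.} Take runs $(s_1,t_1,s_2,t_2,s_3,t_3)=(3,2,17,1,1,1)$, so $n=25$ and the degrees on $S_1,T_1,S_2,T_2,S_3,T_3$ are $4,6,2,23,1,24$. The run-constant vector $y$ with values $0,-1,-6,36,204,-34$ on these runs satisfies $Ay=-\tfrac16\Deg\,y$; you can check it run by run. Hence $\Deg^{1/2}y$ is a quotient eigenvector of $\mathcal{A}$ for $-1/6=-1/d_{T_1}$. This is also the local eigenvalue of the two-vertex run $T_1$, so $-1/6$ has multiplicity $2$. For $m=3$ and $s_1\ge2$, this coincidence happens exactly when $(s_2+s_3)(t_2+t_3)=s_3t_2\,d_{T_1}^2$. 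So no tridiagonal reduction can deliver your lemma. Your fallback of computing only the cases $m\le2$ does not help either, because the reduction to $m\le2$ is exactly what the lemma was supposed to supply.

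\emph{What the count actually needs.} It suffices that the quotient spectrum be simple for the coarsest twin partition, merging the initial vertex into $T_1$ when $s_1=1$. That alone yields $2m$ distinct eigenvalues (resp.\ $2m-1$ after merging), with no claim about local values. For $\theta\notin\{0\}\cup\{-1/d_{T_k}\}$, simplicity follows by peeling:
\begin{enumerate}
\item In a two-dimensional eigenspace, choose $y$ with $Ay=\theta\Deg\,y$ and $y|_{T_m}=0$.
\item The $S_m$ and $T_m$ equations give $y|_{S_m}=0$ and a vanishing weighted sum over the earlier runs.
\item The $T_{m-1}$ equation then reads $(1+\theta d_{T_{m-1}})y_{T_{m-1}}=0$, and the $S_{m-1}$ equation gives $y_{S_{m-1}}=0$.
\item Continuing down to $S_1$ forces $y=0$.
\end{enumerate}
The values $\theta=-1/d_{T_k}$ need a separate check.

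\emph{Remaining cases.} The argument above gives $m\le2$ when $s_1\ge2$, but only $m\le3$ when $s_1=1$. In that case, five merged classes plus the always-present local value $-1/d_{T_1}$ could a priori give exactly five distinct eigenvalues, so it must be excluded too. In every residual case, coincidences between quotient and local eigenvalues have to be computed, not assumed. Finally, you never state the resulting list of graphs, which is the actual content of the theorem's last sentence.
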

	
	This result builds on the foundational normalized-Laplacian spectral theory of Banerjee and Jost \cite{BanerjeeJost2008}, specializing their general machinery (originally developed to study spectral signatures of network motifs) to the threshold-graph setting.
	
	\section{Extremal spectral radius problems}
	\label{sec:extremal}
	
	Threshold graphs arise not only as objects of direct spectral study but as the extremal graphs in a wide range of spectral optimization problems over graphs of fixed order and/or size — a role traceable to Brualdi and Hoffman's early observation that extremal $(0,1)$-matrices for the spectral radius take a stepwise form coinciding with the threshold-graph adjacency structure.
	
	\begin{theorem}[Brualdi-Hoffman \cite{BrualdiHoffman1985}]
		Among all connected graphs with a given number of vertices $n$ and edges $m$, the graph maximizing the adjacency spectral radius is always a threshold graph. An independent proof shows that a connected extremal graph can contain none of $P_4$, $C_4$, or $2K_2$ as an induced subgraph, which is precisely the threshold-graph characterization.
	\end{theorem}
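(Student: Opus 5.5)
Throughout, let $G$ be a connected graph on $n$ vertices with $m$ edges whose adjacency spectral radius $\rho=\rho(A(G))$ is maximum among such graphs. Let $x>0$ be its Perron eigenvector, which is strictly positive by Perron--Frobenius since $G$ is connected. The plan is to prove that the neighbourhoods of $G$ are nested and ordered by Perron weight:
\[
x_u\ge x_v \quad\Longrightarrow\quad N(v)\setminus\{u\}\subseteq N(u).
\]
Nested neighbourhoods give the threshold property directly. Indeed, each of $P_4$, $C_4$ and $2K_2$ contains two vertices $u,v$ together with vertices $w\in N(v)\setminus N[u]$ and $w'\in N(u)\setminus N[v]$. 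For instance, in $2K_2$ take the two edges $uw'$ and $vw$. Such incomparable neighbourhoods are forbidden by the displayed implication, so $G$ has no induced $P_4$, $C_4$ or $2K_2$.

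The key tool is a Kelmans-type edge rotation. Suppose $x_u\ge x_v$ and there is a vertex $w\ne u$ with $w\sim v$ and $w\not\sim u$. Let $G'$ be obtained from $G$ by deleting the edge $vw$ and adding the edge $uw$; it has the same numbers of vertices and edges. By the Rayleigh quotient,
\[
\rho(G')\ \ge\ x^{T}A(G')x \;=\; x^{T}A(G)x+2x_w(x_u-x_v)\ \ge\ \rho ,
\]
where $x$ is normalized to unit length. To make the inequality strict, suppose equality held. Then $x$ would also be a Perron eigenvector of $A(G')$ for the eigenvalue $\rho$. Comparing the eigen-equation at $v$ in $G$ and in $G'$ gives $(A(G')x)_v=(A(G)x)_v-x_w<\rho x_v$, which is a contradiction. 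Hence $\rho(G')>\rho(G)$, contradicting maximality as long as $G'$ is connected.

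I expect the main obstacle to be keeping $G'$ connected. Deleting $vw$ can only disconnect $G$ in two ways: either $vw$ was a bridge, or $v$ becomes isolated because $w$ was its only neighbour. In every case $u$ and $w$ stay joined by the new edge. If $vw$ is a bridge, the side containing $v$ is reattached to the rest of the graph through $u$ whenever $u$ lies on $w$'s side. If instead $u$ lies on $v$'s side, I would first try to pick a different witness pair $(u,v)$ or $w$, for example the pair maximizing $x_u-x_v$. If that fails, I would work with the maximum over all graphs with $n$ vertices and $m$ edges, whose extremiser may have isolated vertices. In that setting the rotation argument goes through unconditionally on the non-trivial component, so that component is threshold. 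I would then argue that the connected extremiser coincides with it, because adding a dominating vertex to a threshold graph keeps it threshold. This reduction is the step I would check most carefully.

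Once nestedness is established, ordering the vertices by $x$ gives the creation sequence explicitly. A vertex of largest weight is adjacent to all others, since $G$ is connected and nested. Removing vertices in order of increasing weight, each removed vertex is either isolated or dominating in what remains. This recovers the definition of a threshold graph from Section~\ref{sec:intro}.
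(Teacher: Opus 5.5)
The paper only cites this result and gives no proof, so I am judging your argument on its own. Your strategy (Perron vector, Kelmans-type rotation, then reading off the forbidden subgraphs from nested neighbourhoods) is the standard one. The Rayleigh-quotient step and the strictness argument via the eigen-equation at $v$ are correct, and they do not even need $G'$ to be connected.

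The genuine gap is the step you flagged: you never show that $G'$ is connected, and your case analysis is inverted. After deleting a bridge $vw$, the new edge $uw$ reconnects the two sides exactly when $u$ lies on $v$'s side. If $u$ lies on $w$'s side, the component of $v$ stays cut off; this includes the pendant case, which is just a special bridge. The bad case really occurs. In $P_4=abcd$ one has $x_a=x_d$, and the triple $u=a$, $v=d$, $w=c$ satisfies your hypotheses, yet rotating $dc$ to $ac$ isolates $d$. So the implication $x_u\ge x_v\Rightarrow N(v)\setminus\{u\}\subseteq N(u)$ cannot be proved triple-by-triple as written, and re-choosing the witness pair is only suggested, not carried out.

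Your fallback does not repair this. The maximiser over all graphs with $n$ vertices and $m$ edges is in general disconnected: for $m=\binom{k}{2}$ and $n>k$ it is $K_k$ plus isolated vertices. So it does not coincide with the connected maximiser, which maximises over a strictly smaller family. Adding a dominating vertex changes both $n$ and $m$, so it does not link the two extremal problems.

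The missing idea is to establish a dominating vertex first.

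\begin{enumerate}
\item Let $u_0$ have maximum Perron weight, and suppose it is not dominating. Connectivity gives a vertex $w$ at distance exactly $2$ from $u_0$, with a common neighbour $v$. Then $x_{u_0}\ge x_v$, and rotating $vw$ to $u_0w$ keeps both $v$ and $w$ adjacent to $u_0$.
\item Any path that used the deleted edge can therefore be rerouted through $u_0$, so $G'$ is connected. Your strictness argument then contradicts maximality, so $u_0$ is dominating.
\item Now take any violating triple $(u,v,w)$. Automatically $u\ne u_0$ and $w\ne u_0$, since $u_0$ is adjacent to every vertex.
\item If $v\ne u_0$, the rotation leaves $u_0$ dominating, so $G'$ is connected.
\item If $v=u_0$, then $w\sim u\sim u_0$ in $G'$ and every other vertex is still adjacent to $u_0$, so $G'$ is again connected.
\end{enumerate}

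With connectivity secured in every case, the rest of your argument goes through unchanged.
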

	
	Despite this characterization being known in principle, identifying \emph{which} threshold graph is extremal for given $(n,m)$ remains theoretically difficult in general.
	
	\begin{theorem}[Radanovi\'c--Fellague--Ostoji\'c--Stevanovi\'c--Davidovi\'c \cite{RadanovicEtAl2024}]
		Metaheuristic search methods (incomplete enumeration guided by structural properties of threshold graphs) are developed to find threshold graphs of maximum spectral radius for given $(n,m)$, complementing the sparse set of cases with a full theoretical resolution.
	\end{theorem}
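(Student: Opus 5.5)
Since the statement is constructive, the plan is to exhibit such a method and check that it has the properties claimed. I would do this in four steps: (i) reduce the search space, (ii) give an exact and cheap objective evaluation, (iii) define a neighbourhood structure on which a metaheuristic runs, and (iv) argue correctness on the instances where a theoretical answer is known.

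\textbf{Step (i): the search space.} By the Brualdi--Hoffman theorem stated just above, the maximizer of the adjacency spectral radius over connected graphs with $n$ vertices and $m$ edges is a threshold graph. So the search space is the set $\mathcal{T}(n,m)$ of connected creation sequences $0^{s_1}1^{t_1}\cdots 0^{s_m}1^{t_m}$ with $n$ vertices and exactly $m$ edges. Equivalently, by the nested-neighbourhood characterization, this is the set of Ferrers (staircase) shapes with $m$ cells in the strictly upper-triangular $n\times n$ region. I would encode each candidate as the run vector $(s_1,t_1,\dots,s_m,t_m)$. The edge count is an explicit quadratic in these runs, obtained by summing, for each dominating run, $t_i\cdot(\text{number of earlier vertices})+\binom{t_i}{2}$. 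This makes feasibility checkable in $O(m)$ time.

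\textbf{Step (ii): the objective.} For each candidate I would compute the spectral radius through the equitable run partition described in the introduction. The largest eigenvalue of $A(G)$ is then the Perron root of the $2m\times 2m$ quotient matrix, because the Perron vector is constant on twin classes. This root can be found by power iteration or bisection on the characteristic polynomial produced by the Jacobs--Trevisan $O(n)$ recursion. As a result, each objective evaluation costs time in the number of runs rather than in $n$. This is what makes it feasible to evaluate very many candidates.

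\textbf{Step (iii): the neighbourhoods.} Here I would define edge-preserving local moves on Ferrers shapes. One move removes a corner cell and adds an addable cell elsewhere; this is the "transfer an edge" operation, and it keeps the graph threshold. A second, coarser move shifts a vertex between adjacent runs and then repairs the edge count with such cell moves. I would embed these moves in a variable neighbourhood search. The scheme uses steepest-ascent local search in the smallest neighbourhood, and shaking in progressively larger ones (several simultaneous cell transfers). Structural properties of the extremal graphs would be used to prune: by majorization-type arguments these tend to be close to a clique with pendant structure, such as quasi-complete or quasi-star shapes. Accordingly, I would restrict the shaking to shapes with few runs and seed the search with those two canonical families.

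\textbf{Step (iv): validation, and the main obstacle.} I would validate the procedure in two ways. First, against exhaustive enumeration of $\mathcal{T}(n,m)$ for small $n$, where $|\mathcal{T}(n,m)|$ is a partition count and can be listed. Second, against the cases with a full theoretical resolution, such as $m=n-1+k$ for small $k$, or $m$ close to $\binom n2$; in these cases the procedure must return the known maximizer. I expect the main obstacle to be that the landscape on $\mathcal{T}(n,m)$ is not unimodal: two distinct shapes (quasi-complete versus quasi-star type) can be near-ties, so local search can stall. The first remedy I would try is to run the search from both seeds and compare. I would then certify the winner by checking that no single or double cell transfer improves the Perron root. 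This certifies local optimality only, which is exactly what a metaheuristic claim, as the statement is worded, requires.
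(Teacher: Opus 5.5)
The paper gives no proof of this statement. It is a one-line summary of the computational contribution of \cite{RadanovicEtAl2024}, supported only by that citation. Its content is therefore attributional: a particular group developed and ran such methods, and their results complement the few $(n,m)$ for which the maximizer is known theoretically. No construction of your own can establish that. What your four steps would establish is the much weaker fact that \emph{some} heuristic search over connected threshold graphs of order $n$ and size $m$ can be designed.

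As a design, your ingredients are sensible:
the Brualdi--Hoffman reduction to stepwise matrices (shifted Ferrers shapes);
evaluating the Perron root on the run quotient matrix, which is legitimate because the Perron vector of a connected graph is constant on twin classes;
and edge-transfer moves between removable and addable cells.
Three small repairs are needed. The transfer move must never remove the cell $(1,n)$, since removing it isolates vertex $n$. It must choose a cell that is still addable after the removal. And you should not use $m$ simultaneously for the number of edges and the number of runs.

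The genuine gap is in Step~(iv), and it affects even the weaker claim. You assert that on the theoretically resolved instances the procedure ``must return the known maximizer'', but nothing in Steps~(i)--(iii) implies this. The shaking is restricted to few-run shapes and seeded only with quasi-complete and quasi-star graphs, on the strength of a heuristic ``tendency'' rather than a theorem. Your final certificate (no improving single or double cell transfer) is only a local-optimality check, in a landscape you yourself expect to be multimodal. So you have shown neither that the method finds a maximizer nor that it reproduces the known ones; agreement with the resolved cases would be an empirical observation, not a deduction.

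For the search to be sound ``incomplete enumeration guided by structural properties'', every pruning rule has to be a \emph{proven} necessary condition on extremal graphs, so that no maximizer can be discarded. Otherwise the honest conclusion is that, outside the resolved cases, the output is only a conjectured maximizer. The statement itself is justified by the citation, not by a constructed method.
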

	
	\begin{theorem}[Csikv\'ari--Damnjanovi\'c--Stevanovi\'c--Wagner \cite{Dragon2024}]
		Two-sided bounds on the adjacency spectral radius of a connected threshold graph are given, expressed as the largest roots of two cubic polynomials whose coefficients are read off from the creation sequence.
	\end{theorem}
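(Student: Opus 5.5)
The plan is to reduce the spectral radius $\rho(G)=\lambda_1(A(G))$ to a small system of linear relations among a few aggregate quantities built from the Perron vector. Closing that system by inequalities in each direction should give $p_{\mathrm{low}}(\rho)\ge 0$ and $p_{\mathrm{up}}(\rho)\le 0$ for two explicit cubics, whose largest roots then sandwich $\rho$. The starting point is the split structure of a threshold graph. Let $K$ be the set of vertices created as dominating (all $1$-runs together with the initial vertex, chosen so that $K$ is a clique) and $S$ the set created as isolated, so $S$ is independent. Then
\[
A(G)=\begin{pmatrix} J-I & B\\ B^{T} & 0\end{pmatrix},
\]
where $B$ is a $(0,1)$ matrix with nested rows and columns (a Ferrers-shaped matrix read off from the runs $s_i,t_i$). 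By the equitable run partition, the Perron vector $x$ is constant on runs. By the nested-neighbourhood property, $x_u\le x_v$ whenever $N(u)\subseteq N[v]$, so $x$ is monotone in the degree order.

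Writing $\rho x_K=(J-I)x_K+Bx_S$ and $\rho x_S=B^{T}x_K$, I eliminate $x_S$ to get
\[
\rho(\rho+1)x_K=\rho\,(\mathbf 1^{T}x_K)\mathbf 1+BB^{T}x_K .
\]
I then sum this against $\mathbf 1$ and against the degree vector of $K$ into $S$. With $X=\mathbf 1^{T}x_K$ and $Y=d_S^{T}x_K$ (where $d_S(u)=|N(u)\cap S|$), the first sum gives $\rho(\rho+1)X=\rho|K|X+\mathbf 1^{T}BB^{T}x_K$, and $\mathbf 1^{T}BB^{T}x_K=\sum_{w\in S}d(w)\,(B^{T}x_K)_w$. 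Terms of this last type, and the analogous one from the second summation, are where the system fails to close. They must be compared with products such as $Y\cdot(\text{average degree})$ or $X\cdot\sum_{w\in S}d(w)^2/|K|$.

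The key tool for this comparison is Chebyshev's sum inequality. Both $x$ and the degree weights are similarly ordered (the nested structure together with the monotonicity of $x$), so sums of products are bounded below by the product of averages. A reverse, Grüss/extremal-type estimate uses the extreme runs (the last dominating run and the first isolated run) for the other direction. Substituting these into the two summed identities yields a homogeneous $2\times 2$ system in $(X,Y)$ with coefficients linear or quadratic in $\rho$. Nonnegativity of $X,Y$ then forces the determinant to have a fixed sign, and this determinant is a cubic in $\rho$ whose coefficients are $n$, $|K|$, $|E(G)|$, and second-moment quantities of the degrees, all computable from the creation sequence. Doing this once with the lower Chebyshev estimates and once with the upper ones gives the two cubics. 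Finally I check that $\rho$ exceeds the other roots, so that it is the largest root, using that $\rho\ge$ the average degree $2|E|/n$ by the Rayleigh quotient.

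I expect the main obstacle to be the closing step. The products $BB^{T}$ involve the full run profile, and a naive estimate loses the dependence on the creation sequence or produces a quartic. I would first try keeping only the three classes ``last dominating run'', ``rest of $K$'' and ``$S$''. That gives a $3\times3$ quotient-like comparison matrix whose characteristic polynomial is automatically cubic. For the lower bound I would use interlacing/Rayleigh with a vector constant on these three classes; for the upper bound I would use entrywise domination of $A$'s quotient by that coarsened matrix via Perron--Frobenius monotonicity. Sharpness would be checked on the complete split graphs $CS(s,t)$, where both bounds should coincide with the exact value.
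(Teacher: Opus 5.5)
The survey records this result only by citation to \cite{Dragon2024} and gives no proof, so there is no in-paper argument to compare your route with. As worded, the statement only asks for \emph{some} pair of cubics with creation-sequence coefficients, and your cubics need not coincide with those of the reference. Judged on its own, your main line is sound except for its last step. The elimination $\rho(\rho+1)x_K=\rho X\mathbf 1+BB^{T}x_K$ and both summed identities are correct. Since $d(w)$, $x_w$ and $D(w):=\sum_{u\in N(w)}d_S(u)$ are all nonincreasing along the isolated runs, Chebyshev gives, with $k=|K|$, $e=e(K,S)$ and $\sigma=\sum_{u\in K}d_S(u)^2$,
\[
\rho(\rho+1-k)\,X\ \ge\ \tfrac{e}{|S|}\,Y,\qquad \bigl(\rho^2+\rho-\tfrac{\sigma}{|S|}\bigr)\,Y\ \ge\ \rho e\,X .
\]
Both left-hand coefficients are thereby forced to be positive, so you may multiply and obtain $p_{\mathrm{low}}(\rho):=(\rho+1-k)(\rho^2+\rho-\sigma/|S|)-e^2/|S|\ge 0$.

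Your localisation step, however, fails. $p_{\mathrm{low}}\ge0$ also holds between its two smaller roots, and $\rho\ge 2|E|/n$ does not rule that out. Take $0\,1^{9}\,0^{10^4}\,1$, which is $K_{11}$ with $10^4$ pendant vertices at one vertex. There $2|E|/n<2.01$, but $p_{\mathrm{low}}(2.01)>0>p_{\mathrm{low}}(k-1)$, so the middle root exceeds the average degree. The correct argument runs as follows. $p_{\mathrm{low}}<0$ on the interval between $k-1$ and the positive root $r$ of $\rho^2+\rho=\sigma/|S|$, and it is increasing to the right of that interval. Hence every root other than the largest lies below $\min(k-1,r)$, while $\rho\ge k-1$ because $K$ is a clique.

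The upper direction is genuinely incomplete. Chebyshev is one-sided, and a Gr\"uss-type reverse inequality does not help. It is additive and involves the oscillation of $x$ on $S$, which cannot be expressed through $X$ and $Y$, so it yields no homogeneous $2\times2$ system. What closes the system is the crude extremal bound you allude to: $d(w)\le k$ and $D(w)\le e$, with equality on the first isolated run when $K$ is exactly the set of dominating vertices. This gives $\rho(\rho+1-k)X\le kY$ and $(\rho^2+\rho-e)Y\le\rho eX$. Using $\rho\ge k-1$, you get $(\rho+1-k)(\rho^2+\rho-e)\le ek$ whenever $\rho^2+\rho\ge e$. If $\rho^2+\rho<e$, then $\rho$ lies below the positive root of $\rho^2+\rho=e$, where this cubic equals $-ek<0$, so the conclusion persists. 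Observe that this cubic factors as $(\rho+1)\bigl(\rho^2-(k-1)\rho-e\bigr)$, so the upper bound is really quadratic; a genuinely cubic upper bound needs a finer reverse estimate. Both closures are equalities on $CS(s,t)$, as you predicted.

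In your three-class fallback, the lower bound via interlacing for the non-equitable quotient matrix is fine. ``Entrywise domination'' is not a valid mechanism for the upper bound, though. Averaging over a non-equitable class lowers some row counts, so the averaged $3\times3$ matrix gives a lower bound, not an upper one. Use instead $C_{IJ}=\max_{u\in V_I}|N(u)\cap V_J|$ and $y_I=\max_{u\in V_I}x_u$. Then $\rho y\le Cy$ with $y>0$, and Perron--Frobenius gives $\rho\le\rho(C)$.
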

	
	The role of threshold graphs as extremal cases extends well beyond the adjacency spectral radius: they appear as the extremal graphs (or spanning supergraphs of the extremal graphs) in spectral-radius sufficient conditions for graph-factor existence, Hamiltonicity, and fractional matching number, across the adjacency, signless Laplacian, distance, distance Laplacian, and distance signless Laplacian spectral radii alike — see Section~\ref{sec:distance-signless} for a distance signless Laplacian instance, and the signless Brouwer conjecture (Theorem~\ref{thm:interlacing}) for a signless Laplacian instance.

	\section{Cospectrality: which matrices determine threshold graphs?}
	\label{sec:cospectrality}
	
	A recurring theme across every matrix surveyed above is the question of whether threshold graphs are determined by their spectrum — whether non-isomorphic threshold graphs can share the same spectrum for a given matrix. The answer varies strikingly by matrix, and collecting these results side by side (Table~\ref{tab:cospectral}) is, to our knowledge, not done explicitly elsewhere in the literature.\\
	We first recall a theorem that establishes the existence of cospectral pairs of threshold graphs with respect to the distance signless Laplacian matrix.
	
	\begin{theorem} [Tura \cite{Tura2018}]
		\label{thm:DQcospectral}
		Consider the connected threshold graphs $G_1$ and $G_2$ given by the binary sequences $S_1 = (0,~1,~1,~0,~b_5,~b_6, . . . , b_n)$ and $S_2 = (0,~0,~0,~1,~b_5,~b_6, . . . , b_n)$, for any $n \geq 5$.
		Then $G_1$ and $G_2$ are non-isomorphic and $D^Q$-cospectral.
	\end{theorem}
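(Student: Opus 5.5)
The plan is to reduce the $n\times n$ cospectrality question to a statement about the $4$-vertex induced subgraphs on $\{v_1,v_2,v_3,v_4\}$. In $G_1$ (bits $0,1,1,0$) these induce $K_3\cup K_1$. In $G_2$ (bits $0,0,0,1$) they induce the star $K_{1,3}$. Both have $3$ edges. Every later vertex $v_j$ ($j\ge5$) is adjacent either to all four of these vertices or to none, according to $b_j$, and identically in $G_1$ and $G_2$. I assume, as the statement implicitly requires for connectedness and diameter $\le 2$, that the last bit is $1$.

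\textbf{Step 1 (rewrite $D^Q$).} By \eqref{eq:diam2}, $D=2(J-I)-A$, and each transmission is $\Tr(u)=2(n-1)-d(u)$. Hence
\[
D^Q(G)=(2n-4)I+2J-Q(G).
\]
Order the vertices as the block $B=\{v_1,\dots,v_4\}$ followed by the rest. Let $c$ be the number of dominating vertices among $v_5,\dots,v_n$, which is the same for both graphs. Then
\[
D^Q(G_i)=\begin{pmatrix} M_i & \mathbf 1 r^T\\ r\mathbf 1^T & N\end{pmatrix},
\qquad
M_i=(2n-4-c)I+2J_4-Q(H_i),
\]
where $H_1=K_3\cup K_1$ and $H_2=K_{1,3}$. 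The vector $r$ and the block $N$ are the same for $i=1,2$: distances from later vertices to $B$ are constant on $B$, and the degrees of later vertices and the distances among them do not change.

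\textbf{Step 2 (Schur complement).} For $x$ outside both spectra,
\[
\det(xI-D^Q(G_i))=\det(xI-M_i)\,\det\!\bigl(xI-N-r\,[\mathbf 1^T(xI-M_i)^{-1}\mathbf 1]\,r^T\bigr).
\]
It therefore suffices to show two things: $M_1$ and $M_2$ are cospectral, and $\mathbf 1^T(xI-M_1)^{-1}\mathbf 1=\mathbf 1^T(xI-M_2)^{-1}\mathbf 1$ as rational functions. Write $M_i=\beta I+2\mathbf 1\mathbf 1^T-Q(H_i)$ with $\beta=2n-4-c$. The matrix determinant lemma and Sherman--Morrison then express both quantities through $\det(zI+Q(H_i))$ and $\phi_i(z)=\mathbf 1^T(zI+Q(H_i))^{-1}\mathbf 1$ only. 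So the whole claim reduces to two facts: $Q(H_1)$ and $Q(H_2)$ are cospectral, and $\phi_1=\phi_2$.

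\textbf{Step 3 (the $4\times4$ check).} Both $Q(K_3\cup K_1)$ and $Q(K_{1,3})$ have spectrum $\{4,1,1,0\}$. For the main parts, write $\phi_i(z)=\sum_\lambda \|P_\lambda\mathbf 1\|^2/(z+\lambda)$, where $P_\lambda$ is the orthogonal projection onto the $\lambda$-eigenspace.
\begin{itemize}
\item For $K_3\cup K_1$, the $4$-eigenvector $(1,1,1,0)$ gives weight $3$. The $0$-eigenvector $e_4$ (the isolated vertex) gives weight $1$. The $1$-eigenspace consists of sum-zero vectors on the triangle and gives weight $0$.
\item For $K_{1,3}$, the $4$-eigenvector $(3,1,1,1)/\sqrt{12}$ gives weight $36/12=3$. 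The $0$-eigenvector $(1,-1,-1,-1)/2$ gives weight $1$. The $1$-eigenspace consists of sum-zero vectors on the leaves and gives weight $0$.
\end{itemize}
Hence $\phi_1=\phi_2$, and the two characteristic polynomials agree.

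Non-isomorphism is immediate from the degree sequences, which differ, for instance, in the degree of $v_1$ relative to the rest: the pendant-like structure of the star versus the isolated vertex of $K_3\cup K_1$ gives different multisets of degrees once the common contributions $c$ are added.

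The main obstacle I anticipate is Step 2. One must check carefully that the off-diagonal blocks really have the rank-one form $\mathbf 1r^T$ with the same $r$ for both graphs, and that the $N$ block is identical. This is exactly where the uniform behaviour of $v_5,\dots,v_n$ toward $B$ and diameter $2$ are used. One must also be careful with the rational-function identities at the finitely many excluded values of $x$, where continuity handles the exceptions.
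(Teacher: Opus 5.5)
The survey gives no proof of this statement; it only cites \cite{Tura2018}. There is therefore nothing in the paper to match against, and judged on its own your argument is correct and complete.

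The individual steps check out:
\begin{itemize}
\item The identity $D^Q(G)=(2n-4)I+2J-Q(G)$ holds.
\item $B=\{v_1,\dots,v_4\}$ is a module: every later vertex sees $B$ uniformly. This gives the off-diagonal block $\mathbf 1 r^T$ with $r=2\mathbf 1-a$, where $a$ is the indicator of the later dominating vertices, and the same $r$ and $N$ for both graphs.
\item The Schur-complement sign is right.
\item With $z=x-\beta$ and $K_i=zI+Q(H_i)$, the two formulas you allude to are $\det(K_i-2J_4)=\det K_i\,(1-2\phi_i)$ and $\mathbf 1^T(K_i-2J_4)^{-1}\mathbf 1=\phi_i/(1-2\phi_i)$. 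Write these out explicitly.
\item Your computations $\spec Q(H_1)=\spec Q(H_2)=\{4,1,1,0\}$ and $\phi_1(z)=\phi_2(z)=3/(z+4)+1/z$ are correct and finish the argument.
\end{itemize}
As a sanity check, for $n=5$, $b_5=1$ both graphs have $D^Q$-spectrum $\{3,4,4,4,11\}$. Your reading of the sequences is also the right one: $b_1=0$ is the initial vertex, so the prefixes are $K_3\cup K_1$ and $K_{1,3}$. Under the other reading the prefixes would have different edge counts, and the traces of $D^Q$ would already differ.

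The survey's own tool for $D^Q$ is the run-partition quotient of Theorem~\ref{thm:DQquotient}. Your route is cleaner than what that tool would give. The run decompositions of $S_1$ and $S_2$ have different shapes ($0\,1^2\,0\cdots$ against $0^3\,1\cdots$), and whether runs merge at position $5$ depends on $b_5$. A quotient-matrix proof would therefore need case analysis and a determinant identity valid for arbitrary suffixes. Your module and Schur-complement argument makes the suffix irrelevant and isolates exactly why the pair works: $K_3\cup K_1$ and $K_{1,3}$ are $Q$-cospectral with equal $Q$-main angles. It also proves more. In any diameter-$2$ graph, a module can be replaced by any graph on the same number of vertices with the same $Q$-spectrum and $Q$-main angles without changing the $D^Q$-spectrum.

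The one place to tighten is non-isomorphism. State the degree multisets explicitly: on $B$ they are $\{c,(c+2)^3\}$ in $G_1$ and $\{(c+1)^3,c+3\}$ in $G_2$, and all other degrees are common to both graphs. Hence the full degree sequences differ.
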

	
	The following result fully determines the the cospectral threshold graphs with respect to the adjacency matrix. 
	\begin{theorem} [Lazzarin--M\'arquez--Tura \cite{LMT}]
		\label{thm:nocospectral}
		No two non-isomorphic threshold graphs have the same adjacency spectrum.
	\end{theorem}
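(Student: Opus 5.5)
Since Theorem~\ref{thm:nocospectral} is stated for all threshold graphs, I will treat the connected case first and reduce the disconnected case to it at the end. The plan is to recover the creation sequence, that is, the run structure $(s_1,t_1,\dots,s_m,t_m)$, from the adjacency spectrum. Throughout I use Theorem~\ref{thm:bapat} and Theorem~\ref{thm:eigloc}.

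\textbf{Step 1: read off the local data.} The multiplicity of $0$ is $\sum_i (s_i-1)$, coming from twin differences inside the isolated runs. The multiplicity of $-1$ is essentially $\sum_i (t_i-1)$, coming from the dominating runs; I still have to check whether the quotient matrix can contribute an additional $-1$. By Theorem~\ref{thm:eigloc} every other eigenvalue is simple, so the non-trivial spectrum is exactly the spectrum of the $2m\times 2m$ quotient matrix after removing any $0$ or $-1$ it contains. Bapat's inertia formula then gives the number of $1$'s in the creation sequence, namely the number of negative eigenvalues. Combining this with $n$ and the two multiplicities recovers $m$, $s=\sum s_i$ and $t=\sum t_i$.

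\textbf{Step 2: recover the individual runs.} Here I would use the Jacobs--Trevisan recursion behind the $O(n)$ characteristic polynomial algorithm. Write $P_G(x)=x^{a}(x+1)^{b}\,\phi_G(x)$, where $\phi_G$ is the quotient characteristic polynomial. Adding a run of $t$ dominating vertices or $s$ isolated vertices changes $\phi$ by an explicit linear two-term recurrence whose coefficients are polynomials in $x$ depending on $t$ or $s$. This makes $\phi_G$ a continuant, in effect the determinant of a tridiagonal matrix in the spirit of \cite{AndelicEtAl2020}, with entries determined by $(s_i,t_i)$.

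Uniqueness would then go by induction on $m$. Given two connected threshold graphs with equal $P$, they share $a$, $b$ and $\phi$. I would evaluate $\phi$ at special points, for example $x=0$, $x=-1$ and $x\to\infty$ (for which I expect the leading coefficients encode the traces of powers), together with low-order coefficients. The coefficients of $P$ give the number of edges, the number of triangles, and counts of small closed walks. Since threshold graphs have nested neighborhoods, these counts are polynomial in the degree sequence. A threshold graph is determined by its degree sequence, and within the class the degree sequence is determined by the run vector. So the goal becomes: show that the sequence of spectral moments $\mathrm{tr}(A^k)$, $k\le$ some bound, determines the run vector. Then I would peel off the last run. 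The top eigenvalue, or the value $\phi(-1)$, should isolate $t_m$. After removing the last dominating run and the last isolated run, the remaining graph's polynomial is recovered by dividing through the recurrence, and induction applies.

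\textbf{Main obstacle.} The hard part is the inductive peeling: showing that $\phi$ uniquely determines $(s_m,t_m)$ and the shorter continuant. Two different run vectors could in principle yield the same continuant, and the distance-matrix analogue (Theorem~\ref{thm:distcospectral}) shows that exactly this kind of coincidence happens there. My first attempt would be to compare the polynomials evaluated at $x=0$ and $x=-1$, where the recurrence collapses to products. I expect these values to be $\pm\prod$ of simple factors such as $s_i$ or $t_i$ terms, and sign and magnitude constraints should then force equality of the runs term by term.

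Finally, a disconnected threshold graph is a connected one plus isolated vertices. The number of isolated vertices is fixed by $n$ together with the nullity of the connected part, so the connected case suffices.
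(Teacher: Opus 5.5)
The paper gives no proof of this statement; it only cites \cite{LMT}. Your argument therefore has to stand on its own, and it does not. Step~1 is sound for connected graphs: the inertia alone gives $t$, $s=n-t$ and $m=s-\mathrm{nullity}$. But the whole content of the theorem is that the remaining spectral data determine the individual runs. That is exactly the step you leave as an expectation, and the tools you name for it cannot supply it.

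Let $B_m$ be the $2m\times 2m$ quotient matrix, and $B_{m-1}$ its leading $(2m-2)\times(2m-2)$ block, which is the quotient of the graph with the last two runs deleted. Expanding along the rows of the classes $S_m,T_m$ gives $\det B_m=-s_mt_m\det B_{m-1}$ and $\det(B_m+I)=-s_mt_m\det(B_{m-1}+I)$. Hence
\[
\det(xI-B_m)\big|_{x=0}=(-1)^m\prod_{i=1}^m s_it_i,\qquad
\det(xI-B_m)\big|_{x=-1}=(-1)^m(s_1-1)\,t_1\prod_{i=2}^m s_it_i .
\]
Incidentally, this settles your open check in Step~1: the quotient never has eigenvalue $0$, and it has eigenvalue $-1$ exactly when $s_1=1$.

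Your guess that these values are products is right. That is precisely why they cannot force equality ``term by term''. Together they yield only $\prod_i s_it_i$ and, through their ratio $s_1/(s_1-1)$, the \emph{first} run $s_1$. They say nothing about the last run $(s_m,t_m)$ you intended to peel off. For example, the run vectors $(2,1,3,2)$ and $(2,2,3,1)$ have the same inertia and the same values at $0$ and $-1$. They are told apart only by other coefficients, e.g.\ $15$ versus $12$ edges.

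Your other suggestions do not close the gap either:
\begin{itemize}
\item Knowing $\mathrm{tr}(A^k)$ for all $k$ is equivalent to knowing the spectrum, so ``show that the moments determine the run vector'' just restates the theorem.
\item The claim that the spectral radius isolates $t_m$ has no supporting argument.
\item ``Dividing through the recurrence'' is not an available operation. A two-term recurrence $\phi_k=\alpha_k\phi_{k-1}+\beta_k\phi_{k-2}$ is one identity in two unknown polynomials, even after $(s_m,t_m)$ are known. So you never produce the spectrum of a smaller threshold graph to which the induction hypothesis could be applied.
\end{itemize}
What is missing is a genuine injectivity argument for the map from creation sequences to characteristic polynomials, one that uses the whole polynomial rather than a few special values.

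Finally, your reduction of the disconnected case is circular. For $G=G'\cup kK_1$, $\spec(G)$ gives only the total nullity $\mathrm{nullity}(G')+k$, not the nullity of $G'$. Indeed, $G'\cup kK_1$ has the same inertia as the connected threshold graph obtained from $G'$ by enlarging its first isolated run by $k$. So you must either separately rule out cospectrality between connected and disconnected threshold graphs, or run the argument directly for creation sequences ending in $0$.
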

	
	\begin{table}[h]
		\centering
		\begin{tabular}{ll}
			\toprule
			Matrix & Determined by spectrum? \\
			\midrule
			Adjacency $A(G)$ & No threshold graphs are cospectral (fully determined) \cite{LMT} \\
			Laplacian $L(G)$ & Determined (follows from Theorem~\ref{thm:merris}: spectrum $\Leftrightarrow$ degree sequence) \\
			Distance $D(G)$ & Not determined; explicit cospectral pairs exist (Theorem~\ref{thm:distcospectral}) \\
			Distance Laplacian $D^L(G)$ & Determined (Theorem~\ref{thm:dlthreshold}) \\
			Distance signless Laplacian $D^Q(G)$ & Not determined; explicit cospectral pairs exist (Theorem \ref{thm:DQcospectral}) \\
			Seidel $S(G)$ & Not determined; explicit cospectral pairs exist (Theorem~\ref{thm:seidelthreshold}) \\
			Eccentricity $\varepsilon(G)$ & Open \\
			\bottomrule
		\end{tabular}
		\caption{Spectral determination of threshold graphs, by matrix.}
		\label{tab:cospectral}
	\end{table}
	
	The contrast in Table~\ref{tab:cospectral} is striking: the adjacency, Laplacian, and distance Laplacian matrices all fully determine a threshold graph, while the distance, distance signless Laplacian and Seidel matrices do not. A unifying explanation for \emph{why} some matrices determine threshold graphs and others do not — beyond case-by-case proof — does not appear to be known, and we record it as Open Problem~\ref{op:unify} below.
	
	\section{Integral threshold graphs}
	\label{sec:integral}
	
	A graph matrix $M(G)$ is \emph{integral} if every eigenvalue of $M(G)$ is an integer. The origin of integral graph goes back to the foundational work of Harary et al. \cite{harary}. We close the matrix-by-matrix survey with the study of integrality, synthesizing and extending the classification results scattered through Sections~\ref{sec:adjacency}--\ref{sec:normalized}. As in Section~\ref{sec:cospectrality}, we found no existing paper treating this question uniformly across all seven matrices, so this section presents original analysis alongside the surveyed literature.
	
	The starting observation, verified directly for every matrix below, is that local (twin-class) eigenvalues are always integers — they are read off directly from transmissions, degrees, or fixed small constants, all of which are integer-valued by construction (Section~\ref{sec:intro}). Consequently, for every matrix in this section, integrality of $M(G)$ is governed entirely by the quotient matrix of Section~\ref{sec:intro}'s equitable partition, exactly as multiplicity and cospectrality questions were throughout this survey.
	
	\subsection{Two matrices are integral unconditionally}
	
	\begin{theorem}
		\label{thm:Lintegral}
		Every connected threshold graph is $L$-integral.
	\end{theorem}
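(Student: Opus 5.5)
The plan is to obtain the statement as an immediate corollary of Merris's theorem (Theorem~\ref{thm:merris}), which asserts that for any threshold graph $G$ with degree sequence $d_1\ge\cdots\ge d_n$ the Laplacian spectrum equals, as a multiset, the conjugate partition $d^*=(d_1^*,\ldots,d_n^*)$ with $d_i^*=|\{j: d_j\ge i\}|$. Each $d_i^*$ is the cardinality of a finite set, hence a nonnegative integer. So every eigenvalue of $L(G)$ is an integer, and $G$ is $L$-integral. Connectedness is not needed for this; it only guarantees that $0$ has multiplicity exactly one, which we may note in passing ($d_n^*=0$ while $d_{n-1}^*\ge 1$, since the last vertex is dominating).

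To make the argument self-contained and independent of Theorem~\ref{thm:merris}, I would also give a second route through the twin-class/equitable-partition mechanism of Section~\ref{sec:intro}. First, for twins $u,v$ in an isolated run with common degree $d$, the vector $e_u-e_v$ is a Laplacian eigenvector with eigenvalue $d$. For twins in a dominating run, the same vector has eigenvalue $d+1$. These local eigenvalues are integers.

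Second, I would show the remaining eigenvalues are integers by induction on the creation sequence. Adding an isolated vertex gives $L(G\cup K_1)=L(G)\oplus(0)$. Adding a dominating vertex produces the join $G\vee K_1$. For the join, the standard fact is that if $G$ has order $n$ and Laplacian eigenvalues $0=\mu_1,\mu_2,\ldots,\mu_n$, then $G\vee K_1$ has eigenvalues $0,\ \mu_2+1,\ldots,\mu_n+1,\ n+1$. This follows because eigenvectors orthogonal to $\mathbf 1$ on $G$ extend by $0$ at the new vertex, each gaining $+1$ from the new edge. The two remaining eigenvalues, $0$ and $n+1$, come from a $2\times2$ quotient. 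Starting from $L(K_1)=(0)$, every step preserves integrality, so the induction closes.

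The main obstacle, if any, is in the join step: verifying that the $2\times 2$ quotient on the vectors $\mathbf 1_G$ and $e_{\text{new}}$ has eigenvalues exactly $0$ and $n+1$, with trace $n+1$ and zero determinant. This is a one-line check, so in the final write-up I would simply cite Theorem~\ref{thm:merris} and note the induction as a remark.
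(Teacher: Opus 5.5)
Your primary argument is exactly the paper's proof: Theorem~\ref{thm:merris} identifies $\spec(L(G))$ with the conjugate partition $d^*$, whose entries $d_i^*=|\{j:d_j\ge i\}|$ are integers, and you are right that connectedness plays no role. Your supplementary induction on the creation sequence is also correct and gives a self-contained proof that avoids Merris's theorem; it uses the $L(G)\oplus(0)$ step and the join step, whose $2\times 2$ block has trace $n+1$ and determinant $0$. Once you have the induction, the separate twin-eigenvector step is redundant.
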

	\begin{proof}
		Immediate from Theorem~\ref{thm:merris}: $\spec(L(G))$ is the conjugate partition of the (integer) degree sequence, hence integer-valued.
	\end{proof}
	
	\begin{theorem}
		\label{thm:DLintegral}
		Every connected threshold graph is $D^L$-integral.
	\end{theorem}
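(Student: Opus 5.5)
The plan is to combine the master identity (Proposition~\ref{prop:master}) with $L$-integrality (Theorem~\ref{thm:Lintegral}), after first checking that the hypothesis of Proposition~\ref{prop:master} holds.

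\textbf{Step 1: diameter.} A connected threshold graph $G$ on $n\ge2$ vertices has diameter at most $2$. Connectivity forces the last bit of the creation sequence to be $1$; otherwise the last vertex would be isolated. So the final vertex $v$ is adjacent to every other vertex, and any two vertices are either adjacent or joined through $v$. The case $n=1$ is trivial, since $D^L=(0)$. Consequently Proposition~\ref{prop:master} applies, giving
\[
D^L(G)=2nI-2J-L(G).
\]

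\textbf{Step 2: reading off the spectrum.} I would justify the spectral consequence of the master identity directly, since it is the only place needing care. Because $L(G)$ is symmetric with $L(G)\mathbf{1}=0$, choose an orthonormal eigenbasis of $L(G)$ containing $\mathbf{1}/\sqrt n$. Connectivity makes $0$ a simple Laplacian eigenvalue, so every other basis vector $x$ is orthogonal to $\mathbf{1}$ and satisfies $Jx=0$. Hence
\[
D^L(G)x=(2n-\mu)x,
\]
where $\mu$ is the corresponding Laplacian eigenvalue. For the all-ones vector,
\[
D^L(G)\mathbf{1}=2n\mathbf{1}-2n\mathbf{1}-0=0.
\]
The spectrum is therefore $\{0\}\cup\{2n-\mu:\mu\in\spec(L(G))\setminus\{0\}\}$, counted with multiplicity. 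If one prefers not to rely on simplicity of $0$, the same argument still works on the orthogonal complement of $\mathbf{1}$: any zero Laplacian eigenvalue there would just contribute $2n$, which is also an integer.

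\textbf{Step 3: integrality.} By Theorem~\ref{thm:Lintegral}, equivalently Merris's Theorem~\ref{thm:merris}, each $\mu$ is an integer; in fact $\mu=d_i^*$. Since $2n$ is an integer, every eigenvalue $2n-d_i^*$ of $D^L(G)$ is an integer, as is $0$. This recovers the explicit form of Theorem~\ref{thm:dlthreshold}.

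The only genuine obstacle is Step 1. The master identity needs diameter at most $2$, so it would fail for disconnected graphs, whose distance matrix is not defined in the usual sense. The connectivity hypothesis is used precisely to guarantee the final dominating vertex. Everything after that is a direct combination of Proposition~\ref{prop:master} with Theorem~\ref{thm:merris}.
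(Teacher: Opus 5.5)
Your proof is correct and follows essentially the same route as the paper. The master identity (Proposition~\ref{prop:master}) turns the Laplacian spectrum into $\{0\}\cup\{2n-\mu\}$, and $L$-integrality (Theorem~\ref{thm:Lintegral}) then makes every eigenvalue an integer; your checks that connectivity forces a final dominating vertex (so the diameter is at most $2$) and that the master identity gives this spectrum supply details the paper leaves implicit.
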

	\begin{proof}
		By Proposition~\ref{prop:master}, $\spec(D^L(G)) = \{0\}\cup\{2n-\mu : \mu\in\spec(L(G))\setminus\{0\}\}$. Since $n\in\mathbb{Z}$ and, by Theorem~\ref{thm:Lintegral}, every $\mu\in\spec(L(G))$ is an integer, every element of $\spec(D^L(G))$ is an integer.
	\end{proof}
	
	Theorem~\ref{thm:DLintegral} is, to our knowledge, a new observation: it does not appear explicitly in \cite{LazzarinEtAl2023,MeghaParvathy2026}, though it follows immediately from results already in both.
	
	\begin{proposition}\label{prop:Qirrational}
		Unlike $L(G)$, the ordinary signless Laplacian $Q(G)=\Deg(G)+A(G)$ of a threshold graph does not in general have an integer (or otherwise simply-described closed-form) spectrum.
	\end{proposition}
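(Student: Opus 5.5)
The statement is a negative one, so a single explicit counterexample suffices. I would exhibit a small connected threshold graph, compute its signless Laplacian spectrum with the twin-class/quotient machinery of Section~\ref{sec:intro}, and show that some eigenvalue is irrational.

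The plan is to take the creation sequence $0101$, i.e.\ vertices $v_1,\dots,v_4$ where $v_2$ and $v_4$ are added as dominating vertices and $v_3$ as an isolated vertex. The edges are $v_1v_2, v_1v_4, v_2v_4, v_3v_4$ (the paw), with degrees $(2,2,1,3)$. The vertices $v_1,v_2$ are true twins, so $e_{v_1}-e_{v_2}$ is a $Q$-eigenvector with eigenvalue $d-1=1$. This is the integral local eigenvalue.

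The partition $\{v_1,v_2\},\{v_3\},\{v_4\}$ is equitable. The quotient matrix is
\[
B=\begin{pmatrix}3&0&1\\0&1&1\\2&1&3\end{pmatrix}.
\]
Here the diagonal entries are the degree plus the number of neighbours inside the class, and the off-diagonal entries count neighbours in the other class. Expanding $\det(xI-B)$ gives
\[
x^3-7x^2+12x-4=(x-2)(x^2-5x+2).
\]
So $\spec(Q)=\{1,\,2,\,(5\pm\sqrt{17})/2\}$.

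As a check, the trace is $8=\sum d_i$. Moreover $0$ is not an eigenvalue, consistent with the paw being non-bipartite. Since $17$ is not a perfect square, $Q$ is not integral, which contrasts with Theorem~\ref{thm:Lintegral}, where the same graph has $L$-spectrum $\{0,1,3,4\}$ as the conjugate of $(3,2,2,1)$. This establishes the claim.

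The only real obstacle is choosing the example well, since the smallest candidates are integral. For instance, stars $K_{1,n-1}$ have $Q$-spectrum $\{n,1,\dots,1,0\}$, and $K_n$ is integral too. One therefore needs a creation sequence with at least two alternations, and $0101$ is the smallest such choice. To show the phenomenon is generic rather than isolated, I could remark that the same quotient computation for a complete split graph yields a quadratic factor whose discriminant is a perfect square only on a thin Diophantine set. The single example is enough for the proposition as stated.
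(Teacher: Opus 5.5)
Your argument is correct and is essentially the paper's own approach. The paper also exhibits one explicit example, but it is the five-vertex graph that its convention (initial vertex not encoded) writes as $(0,1,0,1)$, so not your paw. It quotes $\spec(Q(G))\approx\{0.511,1,2,2.711,5.779\}$ only numerically; the three non-integral values are the roots of the irreducible cubic $x^3-9x^2+20x-8$, which fits the vague ``no simple closed form'' clause better than your quadratic surds. Your exact $(5\pm\sqrt{17})/2$ for the paw is a smaller and fully rigorous witness of non-integrality.

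One side remark should be dropped, though: two alternations are not needed. Already $CS(2,2)=K_4-e$ (your $0011$) has twin eigenvalues $2,2$ and quotient rows $(2,2),(2,4)$, so $\spec(Q)=\{2,2,3\pm\sqrt5\}$. This is consistent with your own closing comment on complete split graphs.
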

	
	This is not a theorem in the traditional sense but an observation confirmed by direct computation: e.g.\ the creation sequence $(0,1,0,1)$ gives $\spec(Q(G)) \approx \{0.511,\,1,\,2,\,2.711,\,5.779\}$. This traces to the structural fact: $Q(G)$ lacks the $L(G)\mathbf1=0$ property that makes threshold-graph Laplacian spectra reduce to a combinatorial object.

	\subsection{The complete split graph: a tractable field}
	
	For the remaining five matrices, we work out the complete picture for the simplest non-trivial threshold graphs: the complete split graphs $CS(s,t)$, $s,t\ge1$ — the $m=1$ case of the creation sequence, $s$ isolated vertices joined completely to a clique of $t$ dominating vertices, $n=s+t$. (When $s=1$, $CS(1,t)=K_{t+1}$; when $t=1$, $CS(s,1)=K_{1,s}$, the star.) Each matrix's $2\times2$ quotient matrix, and the resulting characteristic polynomial, is computed directly; we record the discriminant condition for integrality in each case, verified against direct diagonalization.
	
	\begin{theorem}[Eccentricity matrix, closed form]
		\label{thm:ecccs}
		For every $s,t\ge1$, $\varepsilon(CS(s,t))$ is integral, with
		\[
		\spec(\varepsilon(CS(s,t))) = \big\{\, 2(s-1),\ (-2)^{(s-1)},\ (t-1),\ (-1)^{(t-1)} \,\big\}.
		\]
	\end{theorem}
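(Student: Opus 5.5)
The plan is to read $\varepsilon(CS(s,t))$ off directly from the distance structure of $CS(s,t)$. It should turn out to be a direct sum of two scaled ``complete graph'' blocks, so no quotient-matrix or discriminant analysis is needed, unlike the other four matrices in this subsection. I take the eccentricity matrix in the sense of Section~\ref{sec:eccentricity}: an entry $d(u,v)$ is retained exactly when it equals the eccentricity attached to the pair $\{u,v\}$, and is set to $0$ otherwise. Label the vertices as the independent set $I$ ($|I|=s$) followed by the clique $K$ ($|K|=t$), with every vertex of $I$ adjacent to every vertex of $K$.

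\emph{Step 1 (distances and eccentricities).} Assume first $s\ge 2$. Then $d(u,v)=2$ for distinct $u,v\in I$ (common neighbour in $K$), and $d(u,v)=1$ for all other distinct pairs. Hence $e(u)=2$ for $u\in I$ and $e(w)=1$ for $w\in K$.

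\emph{Step 2 (which entries survive).} Consider each type of pair in turn.
\begin{itemize}
\item For $u,v\in I$, the distance $2$ is the eccentricity of both endpoints, so it is kept.
\item For $w,w'\in K$, the distance $1$ equals $e(w)=e(w')$, so it is kept.
\item For $u\in I$, $w\in K$, the distance $1$ is strictly smaller than the larger eccentricity $e(u)=2$ of the pair, so it is discarded.
\end{itemize}
This yields the block-diagonal form
\[
\varepsilon(CS(s,t))=\begin{pmatrix}2(J_s-I_s)&0\\0&J_t-I_t\end{pmatrix}.
\]

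\emph{Step 3 (spectrum).} The matrix $J_k-I_k$ has spectrum $\{k-1,(-1)^{(k-1)}\}$. Therefore the first block contributes $2(s-1)$ and $(-2)^{(s-1)}$, and the second contributes $t-1$ and $(-1)^{(t-1)}$. This is exactly the multiset in the statement, and all of it is integral.

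The main obstacle is Step 2: the whole block-diagonal mechanism hinges on the convention that an $I$–$K$ entry is discarded. Under the convention ``keep $d(u,v)$ iff $d(u,v)=\min\{e(u),e(v)\}$'', the $I$–$K$ entries would be kept and one would simply recover $D(G)$. So I will fix the ``pair eccentricity $=\max\{e(u),e(v)\}$'' reading of the definition in Section~\ref{sec:eccentricity} explicitly before computing.

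The degenerate case $s=1$ also needs separate treatment. There $CS(1,t)=K_{t+1}$, every eccentricity equals $1$, and $\varepsilon=A(K_{t+1})$ with spectrum $\{t,(-1)^{(t)}\}$. This is integral, but it does not agree with the displayed formula (the formula gives $0$ and $t-1$ in place of $t$ and one $-1$). So the closed form should be asserted for $s\ge2$, with integrality for $s=1$ recorded separately.
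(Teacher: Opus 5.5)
Your proof is the same as the paper's. Both compute eccentricity $2$ on the independent side and $1$ on the clique, use the $\max$ rule to discard the cross entries, and read the spectrum off $2(J_s-I_s)\oplus(J_t-I_t)$. The paper's proof adopts the $\max$ rule explicitly too, so your caveat is well placed: under the $\min$ convention you would simply recover $D(CS(s,t))$.

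Your treatment of $s=1$ is more accurate than the paper's. The paper claims the spectrum $\{t,(-1)^{(t)}\}$ of $K_{t+1}$ is ``consistent with the formula at $s=1$'', but it is not: the formula gives $\{0,\,t-1,\,(-1)^{(t-1)}\}$. So the closed form holds only for $s\ge2$, while integrality holds for all $s,t\ge1$.
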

	\begin{proof}
		For $s\ge2$, every $S$-vertex has eccentricity $2$ (distance $2$ to other $S$-vertices, via the clique) and every $T$-vertex has eccentricity $1$ (adjacent to everything, by domination). An eccentricity-matrix entry $\varepsilon(u,v)$ is nonzero only when $D(u,v) = \max(\mathrm{ecc}(u),\mathrm{ecc}(v))$: for $u,v\in S$ this holds ($D(u,v)=2=\max(2,2)$); for $u,v\in T$ this holds ($D(u,v)=1=\max(1,1)$); for $u\in S,v\in T$ it fails ($D(u,v)=1 \ne 2 = \max(2,1)$). Hence $\varepsilon(CS(s,t))$ is block-diagonal, with blocks $2(J_s-I_s)$ and $(J_t-I_t)$, giving the stated spectrum directly (each block's eigenvalues are classical). For $s=1$, $CS(1,t)=K_{t+1}$ has every vertex at eccentricity $1$, so $\varepsilon(G)=D(G)=A(G)$, with spectrum $\{t, (-1)^t\}$ — consistent with the formula at $s=1$.
	\end{proof}
	
	\begin{theorem}
		\label{thm:lastblock}
		Let $G$ be a connected threshold graph with $m\ge1$ blocks and last (necessarily dominating) block $T_m$ of size $t_m$. Then every vertex outside $T_m$ has eccentricity $2$, every vertex in $T_m$ has eccentricity $1$, and consequently $\varepsilon(G)$ splits as a direct sum: $T_m$ contributes exactly the eigenvalues $t_m-1$ (multiplicity $1$) and $-1$ (multiplicity $t_m-1$), completely independently of the rest of the graph, and the remaining $n-t_m$ eigenvalues depend only on the first $2m-1$ blocks, not on $t_m$. In particular, $\varepsilon(G)$-integrality is independent of the size of the last block.
	\end{theorem}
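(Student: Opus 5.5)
The plan is to compute all eccentricities first, then read off the block structure of $\varepsilon(G)$ directly, exactly as in the proof of Theorem~\ref{thm:ecccs}. Write $R=V(G)\setminus T_m$ for the vertices in the first $2m-1$ blocks. Throughout I use the fact recorded in Step~1 of the proof of Theorem~\ref{thm:DQquotient}: in a connected threshold graph the last run is dominating, and $G$ has diameter at most $2$, with $d(u,w)=2$ exactly for non-adjacent pairs.

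\emph{Step 1: eccentricities.} Each vertex of $T_m$ is added as a dominating vertex after all vertices of $R$. Hence every $T_m$-vertex is adjacent to everything, and has eccentricity $1$. For $u\in R$, it suffices to exhibit one vertex non-adjacent to $u$; diameter $2$ then forces $\mathrm{ecc}(u)=2$. I split into cases.
\begin{itemize}
\item If $u$ lies in a block of index $<2m-1$ (some $S_i$ or $T_i$ with $i<m$), then $u$ is non-adjacent to every vertex of $S_m$. Those vertices were added as isolated vertices after $u$, and $S_m\neq\emptyset$.
\item If $u\in S_m$, then $u$ is non-adjacent to every vertex created before it, and to the other vertices of $S_m$.
\end{itemize}
So such a vertex exists unless $m=1$ and $s_1=1$, i.e.\ $G=K_{t+1}$. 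I expect this degenerate case to be the only real obstacle. There the lone vertex of $R$ has eccentricity $1$, so the claim ``every vertex outside $T_m$ has eccentricity $2$'' fails as stated. I would treat it separately, as at the end of the proof of Theorem~\ref{thm:ecccs}: here $\varepsilon(G)=A(K_{t+1})$, and the spectral conclusion should be phrased with the understanding $G\neq K_n$, with the eigenvalue bookkeeping checked by hand.

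\emph{Step 2: block structure.} Order the vertices as $R$ followed by $T_m$. An entry $\varepsilon(u,v)$ is nonzero only when $D(u,v)=\max(\mathrm{ecc}(u),\mathrm{ecc}(v))$. We check the three types of pairs.
\begin{itemize}
\item For $u\in R$ and $v\in T_m$: $D(u,v)=1$ but the maximum eccentricity is $2$, so the entry is $0$.
\item For $u,v\in T_m$: $D(u,v)=1=\max(1,1)$, giving the block $J_{t_m}-I_{t_m}$.
\item For $u,v\in R$: the entry is $2$ if $u\not\sim v$ and $0$ otherwise.
\end{itemize}
Thus
\[
\varepsilon(G)=2A(\overline{G[R]})\oplus (J_{t_m}-I_{t_m}).
\]
The second summand has spectrum $\{t_m-1,(-1)^{(t_m-1)}\}$.

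\emph{Step 3: independence from $t_m$.} $G[R]$ is the threshold graph with creation sequence $0^{s_1}1^{t_1}\cdots 0^{s_m}$; it is determined by the first $2m-1$ blocks alone, and no $T_m$ data enter it. Hence the remaining $n-t_m$ eigenvalues, those of $2A(\overline{G[R]})$, do not depend on $t_m$. Since the $T_m$ summand is always integral, $\varepsilon(G)$ is integral if and only if $2A(\overline{G[R]})$ is. This is independent of $t_m$, which completes the plan.
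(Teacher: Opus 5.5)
Your proposal is correct and follows the same route as the paper (compute the eccentricities, then read off which entries of $\varepsilon(G)$ survive), and it is more careful in two places: you treat $u\in S_m$ separately, which the paper's ``later isolated block'' argument does not literally cover, and you identify the remaining block explicitly as $2A(\overline{G[R]})$, so the independence from $t_m$ is actually argued rather than left implicit. The exception you flag is genuine: for $m=1$, $s_1=1$ one has $G=K_{t+1}$ and $\varepsilon(G)=J-I$ with spectrum $\{t,(-1)^{t}\}$, which does not contain $t_m-1=t-1$, so the direct-sum and eigenvalue claims require $G\neq K_n$, whereas the integrality conclusion still holds trivially there.
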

	\begin{proof}
		A vertex $v\in T_m$ is adjacent to every vertex added before it (domination) and to every vertex added after it is vacuous since $T_m$ is last; hence $\mathrm{ecc}(v)=1$. Conversely, a vertex $u$ in any earlier block is never adjacent to a later \emph{isolated} block's vertices (isolated vertices create no edges when added), and since $G$ is connected $T_m$ is non-empty and later than every non-final block, so $u$ fails to be universal; since $\diam(G)\le2$, $\mathrm{ecc}(u)=2$. Given this, $\varepsilon(u,v)\ne0$ requires $D(u,v)=\max(\mathrm{ecc}(u),\mathrm{ecc}(v))$: for $u,v\in T_m$, $D(u,v)=1=\max(1,1)$, so these entries are kept, giving the clique-type local spectrum $\{t_m-1,\,(-1)^{t_m-1}\}$ exactly as in Theorem~\ref{thm:ecccs}'s proof; for $u\in T_m$, $w\notin T_m$, $D(u,w)=1\ne2=\max(1,2)$, so these entries vanish identically, regardless of $t_m$. This verifies the claimed direct-sum structure.
	\end{proof}
	
	We verified Theorem~\ref{thm:lastblock} directly (by employing a computer search) — that changing only $t_m$ never changes whether $\varepsilon(G)$ is integral — with zero exceptions across $300$ trials each for $m=2$ and $m=3$. This reduces the eccentricity-integrality question from $2m$ free parameters to $2m-1$, but does not resolve it: even the first new case, $m=2$ (reducing to three free parameters $s_1,t_1,s_2$), appears to require a genuinely cubic condition, beyond the quadratic ($2\times2$ quotient) discriminants of Theorem~\ref{thm:csdiscriminants}. We record this as a concrete conjecture.
	
	\begin{conjecture}
		\label{conj:ecc-m2}
		For $m=2$ connected threshold graphs, $\varepsilon$-integrality (equivalently, integrality of the $3\times3$ quotient matrix on blocks $S_1,T_1,S_2$ remaining after Theorem~\ref{thm:lastblock}'s reduction) is governed by a single cubic Diophantine condition on $(s_1,t_1,s_2)$, satisfied by infinitely many triples — e.g.\ computationally, $(s_1,t_1,s_2)=(7,5,6)$ is a verified solution, with $t_2\ge1$ arbitrary.
	\end{conjecture}
	
	\subsection{Four matrices: exact Diophantine characterizations for \texorpdfstring{$CS(s,t)$}{CS(s,t)}}
	
	For $A$, $S$ (Seidel), $D$, and $D^Q$, the quotient matrix of $CS(s,t)$ is genuinely $2\times2$ with both off-diagonal entries nonzero, so (as throughout this survey) integrality reduces to a perfect-square discriminant condition.
	
	\begin{theorem}
		\label{thm:csdiscriminants}
		Let $s,t\ge2$. Write $Q_M$ for the $2\times2$ quotient matrix of $M\in\{A,S,D,D^Q\}$ on $CS(s,t)$, with discriminant $\Delta_M = \mathrm{tr}(Q_M)^2 - 4\det(Q_M)$. Then $CS(s,t)$ is $M$-integral if and only if $\Delta_M$ is a perfect square $r^2$ with $\mathrm{tr}(Q_M) - r$ even, where:
		\[
		\Delta_A = (t-1)^2+4st, \qquad \Delta_S = s^2+6st-4s+t^2-4t+4,
		\]
		\[
		\Delta_D = (2s-1)^2+(t+1)^2-1, \qquad \Delta_{D^Q} = 9s^2-2st-12s+t^2+4t+4.
		\]
	\end{theorem}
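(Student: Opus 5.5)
The plan is to run, for each $M\in\{A,S,D,D^Q\}$, the same two-class equitable-partition argument used in the proof of Theorem~\ref{thm:DQquotient}, applied to $CS(s,t)$ with classes $S$ (the $s$ isolated vertices, pairwise at distance $2$) and $T$ (the $t$ clique vertices, adjacent to everything). Steps 1--4 of that proof carry over verbatim to any matrix whose entries depend only on the classes of the row and column vertices, with diagonal entries constant on each class. Each of $A$, $S=J-I-2A$, $D=2(J-I)-A$ and $D^Q=\Tr+D$ has this property on $CS(s,t)$. So for each $M$ I obtain
\[
\chi_M(x)=\chi_{Q_M}(x)\,(x-\lambda_S)^{s-1}(x-\lambda_T)^{t-1}.
\]
Here $\lambda_S,\lambda_T$ are the twin eigenvalues, computed via the vectors $e_u-e_v$ exactly as in Theorem~\ref{thm:DQlocal}. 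They are integers: for $A$ they are $0,-1$; for $S$ they are $-2,1$; for $D$ they are $-2,-1$; and for $D^Q$ they are $\Tr_S-2,\ \Tr_T-1$. Hence $M$-integrality is equivalent to integrality of the two eigenvalues of $Q_M$. The hypothesis $s,t\ge2$ only ensures both twin classes genuinely occur; it does not affect this reduction.

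Next I write down the four quotient matrices by summing a row over each class. Using $\Tr_S=2(s-1)+t$ and $\Tr_T=s+t-1$, they are
\[
Q_A=\begin{pmatrix}0&t\\ s&t-1\end{pmatrix},\quad
Q_S=\begin{pmatrix}s-1&-t\\ -s&-(t-1)\end{pmatrix},\quad
Q_D=\begin{pmatrix}2(s-1)&t\\ s&t-1\end{pmatrix},\quad
Q_{D^Q}=\begin{pmatrix}4s-4+t&t\\ s&s+2t-2\end{pmatrix}.
\]
Then I compute $\Delta_M=\mathrm{tr}(Q_M)^2-4\det(Q_M)$ in each case and expand. For example, for $D$ one gets $(2s+t-3)^2-8(s-1)(t-1)+4st=4s^2+t^2-4s+2t+1$, which equals $(2s-1)^2+(t+1)^2-1$. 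The other three expansions are of the same routine kind, and I will check each against the stated formula.

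Finally, the eigenvalues of $Q_M$ are $(\mathrm{tr}(Q_M)\pm\sqrt{\Delta_M})/2$. These are integers if and only if $\Delta_M=r^2$ for an integer $r\ge0$ and $\mathrm{tr}(Q_M)-r$ is even. In fact the parity condition is automatic: $\chi_{Q_M}$ is monic with integer coefficients, so its rational roots are integers. Equivalently, $\Delta_M\equiv\mathrm{tr}(Q_M)^2\pmod 4$ forces $r\equiv\mathrm{tr}(Q_M)\pmod 2$. I will note this but keep the statement as given.

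The only real obstacle is bookkeeping rather than ideas. The main risks are getting the diagonal of $Q_{D^Q}$ right (transmission plus the within-class contribution $(s-1)\cdot 2$ or $(t-1)\cdot1$), and getting the signs in $Q_S$ right. I would double-check each quotient by verifying its row sums against $M\mathbf 1$. For $D^Q$ the row sums should be $2\Tr_S$ and $2\Tr_T$, matching the remark after Theorem~\ref{thm:DQquotient}.
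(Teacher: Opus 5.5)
Your proposal is correct and follows the paper's route: compute the four $2\times2$ quotient matrices, note that the twin eigenvalues are integers, and apply $(\mathrm{tr}\pm\sqrt{\Delta})/2$. Your quotient matrices and all four discriminant expansions check out, and you make explicit the equitable-partition factorization and the automatic-parity observation that the paper leaves implicit. There is one harmless slip: the Seidel twin eigenvalues are $-1$ on the independent class and $+1$ on the clique (since $S(u,v)=1$ for non-adjacent twins and $-1$ for adjacent ones), not $-2$ and $1$. Only their integrality is used, so the argument is unaffected.
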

	\begin{proof}
		Direct computation of each $2\times2$ quotient matrix from the block-distance structure of $CS(s,t)$ (adjacency: $A(u,v)=0$ within $S$, $1$ within $T$ and across; Seidel: $S=J-I-2A$; distance: $D(u,v)=2$ within $S$, $1$ within $T$ and across; distance signless Laplacian: $\Tr_S = 2(s-1)+t$, $\Tr_T=s+(t-1)$), followed by the standard $2\times2$ eigenvalue formula $(\mathrm{tr}\pm\sqrt{\Delta})/2$, verified against direct diagonalization for each discriminant.
	\end{proof}
	
	Each condition is satisfied by infinitely many $(s,t)$ with $s,t\ge2$ (so genuinely beyond the trivial $K_n$ case $s=1$ and the star case $t=1$, both of which are integral for every matrix in this section). Table~\ref{tab:integralexamples} records the smallest such instance for each matrix, all confirmed by direct diagonalization.
	
	\begin{table}[h]
		\centering
		\begin{tabular}{lll}
			\toprule
			Matrix & Smallest $CS(s,t)$ integral instance & Spectrum \\
			\midrule
			$A$ & $CS(3,2)$ & $\{-2,-1,0,0,3\}$ \\
			$S$ & $CS(4,4)$ & $\{-5,-1,-1,-1,1,1,1,5\}$ \\
			$D$ & $CS(3,4)$ & $\{-2,-2,-1,-1,-1,0,7\}$ \\
			$D^Q$ & $CS(2,3)$ & $\{3,3,3,4,9\}$ \\
			\bottomrule
		\end{tabular}
		\caption{Smallest non-trivial ($s,t\ge2$) integral complete split graphs by matrix.}
		\label{tab:integralexamples}
	\end{table}
	
	\begin{remark}
		The discriminant $\Delta_D = (2s-1)^2+(t+1)^2-1$ has a recognizable Diophantine shape: writing $X=2s-1$ (odd) and $Y=t+1$, integrality requires $X^2+Y^2 = Z^2+1$ for some integer $Z$, a near-Pythagorean condition with infinitely many solutions, e.g.\ $(s,t)\in\{(3,4),(4,3),(5,7),(5,18),(7,10),\ldots\}$. The remaining three discriminants do not factor into comparably recognizable closed forms; we leave their full Diophantine solution sets uncharacterized.
	\end{remark}
	
	\subsection{Summary and comparison}
	
	Table~\ref{tab:integralsummary} collects the integrality status of every matrix surveyed in this paper, in the same comparative spirit as Table~\ref{tab:cospectral}.
	
	\begin{table}[h]
		\centering
		\begin{tabular}{@{}p{0.3\textwidth}p{0.62\textwidth}@{}}
			\toprule
			Matrix & Integral threshold graphs \\
			\midrule
			Laplacian $L(G)$ & Every connected threshold graph (Theorem~\ref{thm:Lintegral}) \\[2pt]
			Distance Laplacian $D^L(G)$ & Every connected threshold graph (Theorem~\ref{thm:DLintegral}) \\[2pt]
			Eccentricity $\varepsilon(G)$ & Every $CS(s,t)$ (Theorem~\ref{thm:ecccs}); general $m\ge2$ open \\[2pt]
			Adjacency $A(G)$ & Diophantine condition on $CS(s,t)$ (Theorem~\ref{thm:csdiscriminants}); general $m\ge2$ open \\[2pt]
			Seidel $S(G)$ & Diophantine condition on $CS(s,t)$; general $m\ge2$ open \\[2pt]
			Distance $D(G)$ & Diophantine (near-Pythagorean) condition on $CS(s,t)$; general $m\ge2$ open \\[2pt]
			Distance signless Laplacian $D^Q(G)$ & Diophantine condition on $CS(s,t)$; general $m\ge2$ open \\
			\bottomrule
		\end{tabular}
		\caption{Integrality of threshold graphs, by matrix.}
		\label{tab:integralsummary}
	\end{table}
	
	The pattern echoes Table~\ref{tab:cospectral}: the two matrices built from a master identity against the (always-integral) Laplacian — namely $L$ itself and $D^L$ — are unconditionally integral, while every matrix without such an identity ($A$, $S$, $D$, $D^Q$, $\varepsilon$) requires a genuine arithmetic condition, satisfied only on a sparse sublattice of creation sequences. Comparing Tables~\ref{tab:cospectral} and~\ref{tab:integralsummary} row by row, the division is \emph{identical}: $L$ and $D^L$ are both unconditionally integral \emph{and} fully determine the graph, while $A$, $S$, $D$, $D^Q$, and $\varepsilon$ each fail at least one of the two properties. This is more than a loose analogy — it suggests a precise common cause.
	
	\begin{conjecture}[Master-identity criterion]
		\label{conj:masteridentity}
		A graph matrix $M(G)$ is both always-integral and spectrally determining for threshold graphs if and only if $M(G)$ is an affine function of $L(G)$ and $J$ alone (i.e.\ $M(G) = \alpha L(G) + \beta J + \gamma I$ for scalars $\alpha,\beta,\gamma$, as is the case for $L$ itself and, via Proposition~\ref{prop:master}, for $D^L$).
	\end{conjecture}
	
	This would resolve Open Problem~\ref{op:unify} and simultaneously explain every entry of Table~\ref{tab:integralsummary}, since none of $A$, $S$, $D$, $D^Q$, $\varepsilon$ is expressible in this affine form (each depends on $A(G)$ or $D(G)$ in a way not reducible to $L(G)$ and $J$ alone).
	
	\section{Generalizations}
	\label{sec:generalizations}
	
	\subsection{Quasi-threshold graphs}
	
	Quasi-threshold graphs ($P_4$-free and $C_4$-free, equivalently trivially perfect graphs) strictly contain threshold graphs within the cograph hierarchy. As noted in Section~\ref{sec:signless}, the main-$Q$-eigenvalue characterization of threshold graphs \cite{VinagreEtAl2020} has been extended to this broader class \cite{JonesTrevisan2024,JonesTrevisanVinagre2025}, and represents one of the most active current directions for lifting threshold-graph spectral results to a wider setting.
	
	\subsection{Threshold hypergraphs}
	A hypergraph is a generalization of a graph in which an edge can connect any number of vertices rather than exactly two vertices. Formally, a hypergraph $H=(V,E)$ consists of a non-empty set $V$ of vertices and a family $E$ of non-empty subsets of $V$, called hyperedges. Each hyperedge may contain two or more vertices, representing relationships among multiple objects simultaneously. If every hyperedge contains exactly two vertices, the hypergraph reduces to an ordinary graph. Hypergraphs are classified into different types, such as uniform hypergraphs, in which all hyperedges have the same cardinality, and simple hypergraphs, in which no hyperedge is repeated. Thus, hypergraph theory provides a powerful mathematical framework for modelling complex relationships that cannot be adequately represented by ordinary graphs. The study of matrix associated with the hypergraph is known as \emph{Spectral Hypergraph Theory}. It opens up a new directions for future research (see \cite{Banerjee2017,Banerjee2020,Banerjee2021}). The following two theorems address the hypergraph-theoretic aspects of threshold graphs.

	\begin{theorem}[Banerjee--Mishra--Parui \cite{BanerjeeMishraParui2022}]
		A threshold hypergraph is defined via a generalized binary-string construction, and its Laplacian spectrum is computed from the string representation. The Laplacian spectrum of certain threshold hypergraphs is shown to be \emph{$r$-integral} (an integer multiple of some fixed rational $r$) and is computed explicitly from the Ferrer's diagram of the degree sequence — a direct hypergraph analogue of Merris's Theorem~\ref{thm:merris}.
	\end{theorem}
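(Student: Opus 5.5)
We would argue by induction along the binary string, exactly as in Merris's proof of Theorem~\ref{thm:merris}. First we need a precise model. A threshold hypergraph is built from a single vertex by appending either an isolated vertex, which creates no hyperedge, or a dominating vertex $v$. For the dominating case we take the construction in which $v$ is placed in hyperedges together with the earlier vertices in a way that is symmetric in those vertices; for $k$-uniform hyperedges this means every $(k-1)$-subset of them. We use the hypergraph Laplacian $L=\Deg-A$, where
\[
A_{uw}=\sum_{e\ni u,w}\frac{1}{|e|-1}.
\]
The key observation is that this construction is invariant under permuting the earlier vertices. Hence each hypergraph operation acts on $L$ in the same way a graph join or disjoint union does, up to a fixed rational scale factor $r$.

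\textbf{Step 1 (isolated vertex).} Appending an isolated vertex gives $L(H')=L(H)\oplus(0)$. So the spectrum gains one $0$, and the degree sequence gains one zero entry, which leaves the conjugate partition unchanged apart from padding.

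\textbf{Step 2 (dominating vertex).} Let $H$ have $n-1$ vertices and let $v$ be the new dominating vertex. We compute directly that
\[
L(H')=\begin{pmatrix} L(H)+r(n-1)I-r\widetilde{J} & -r\mathbf 1\\ -r\mathbf 1^{T} & r(n-1)\end{pmatrix},
\]
where $\widetilde J$ is the contribution of the new hyperedges among the old vertices. By the symmetry noted above, $\widetilde J$ is a combination of $I$ and $J$, so it preserves the space $\mathbf 1^\perp$.

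\textbf{Step 3 (spectrum of $L(H')$).} Since $L(H)\mathbf 1=0$, every eigenvector $x\perp\mathbf 1$ of $L(H)$ with eigenvalue $\mu$ extends by a zero coordinate at $v$ to an eigenvector of $L(H')$ with eigenvalue $\mu+rc$. Here $c$ is an integer depending only on $n$ and $k$; this is the hypergraph analogue of ``$\mu\mapsto\mu+1$'' under a graph join. The orthogonal complement of these vectors is two-dimensional, spanned by $\mathbf 1_n$ (eigenvalue $0$) and the vector $(\mathbf 1_{n-1},-(n-1))$. The latter has eigenvalue $r$ times an integer, obtained from the trace.

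\textbf{Step 4 (conclusion).} By induction, every eigenvalue is $r$ times an integer, which is the claimed $r$-integrality. It remains to match these integers with the Ferrers diagram of the scaled degree sequence. Adding a dominating vertex increases each old degree by $rc$ and gives $v$ degree $r(n-1)c'$. In the Ferrers diagram this adds a full column and a full row, precisely as in Merris's argument. So the recursion on conjugate partitions coincides with the recursion derived in Step 3, and the spectrum equals $r$ times the conjugate partition read off the string.

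The main obstacle is Step 2. One must pin down the hyperedge rule for a dominating vertex so that the perturbation of the old block is genuinely of the form $\alpha I+\beta J$, that is, so that it commutes with $L(H)$ on $\mathbf 1^\perp$. If the rule involves previously created hyperedges, this symmetry can fail. In that case we would first restrict to the ``certain'' threshold hypergraphs of the statement, namely those whose dominating steps are fully symmetric, and verify the $\alpha I+\beta J$ form directly from the definition of $A$.
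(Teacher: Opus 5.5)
The survey states this result only by citation, so there is no in-paper argument to compare with. On its own terms, your Steps 1--3 can be repaired in your fully symmetric $k$-uniform model, but Step 4 is false. It fails because of your opening claim that a dominating step acts like a rescaled graph join.

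With $N=n-1$ old vertices, the new hyperedges do three things:
\begin{itemize}
\item they raise each old degree by $\binom{N-1}{k-2}$;
\item they couple each old vertex to $v$ with weight $a_N=\tfrac{1}{k-1}\binom{N-1}{k-2}$, which depends on $N$, so it is not a fixed scale $r$;
\item they add $\tfrac{1}{k-1}\binom{N-2}{k-3}$ to every old--old entry of $A$.
\end{itemize}
Hence the old block is $L(H)+\binom{N-1}{k-2}I-\tfrac{1}{k-1}\binom{N-2}{k-3}(J-I)$. Your display gives $L(H)+(n-1)I$ already for graphs, instead of $L(H)+I$. The shift on $\mathbf 1^\perp$ is $\alpha_N=\binom{N-1}{k-2}+\tfrac{1}{k-1}\binom{N-2}{k-3}$, and the new eigenvalue is $(N+1)a_N$. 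Both lie in $\tfrac{1}{k-1}\mathbb Z$, so $r$-integrality with $r=\tfrac{1}{k-1}$ does follow.

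But for $k\ge3$ the shift exceeds the degree increment by $\tfrac{1}{k-1}\binom{N-2}{k-3}\ge\tfrac{1}{k-1}$ as soon as any hyperedge is created. This excess is the old--old coupling, which a graph join never produces. Also, a degree increment larger than $1$ adds that many columns to the Ferrers diagram, not one. So the spectral recursion and the conjugate-partition recursion you claim coincide do not.

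Here is a concrete check with $k=3$.
\begin{itemize}
\item The string vertex, isolated, dominating gives the single hyperedge $\{1,2,3\}$. Then $L=\tfrac32 I-\tfrac12 J$ with spectrum $\{0,\tfrac32,\tfrac32\}$, while $d=(1,1,1)$ and $d^*=(3,0,0)$.
\item One more dominating vertex gives the complete $3$-uniform hypergraph on four vertices. Then $L=4I-J$ with spectrum $\{0,4,4,4\}$, and $d=(3,3,3,3)$.
\end{itemize}
Suppose the spectrum were $r$ times the conjugate of $c\,d$, for constants depending only on $k$. The first example forces $c=2$, $r=\tfrac12$; the second forces $c=1$, $r=1$. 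So no rescaled Merris identity holds even in the fully symmetric model. Your fallback of restricting to hypergraphs with fully symmetric dominating steps therefore does not rescue Step 4.

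What is missing is exactly the Ferrers-diagram content of the cited result. You would need one of two things:
\begin{itemize}
\item a genuinely special subfamily on which $\spec L(H')=\{0,(N+1)a_N\}\cup\{\mu+\alpha_N\}$ matches a conjugate-partition recursion; or
\item a modified diagram built from the shifts $\alpha_N$ rather than from the degrees.
\end{itemize}
As written, you have proved $\tfrac{1}{k-1}$-integrality for one model of your own choosing, and nothing about Ferrers diagrams.
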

	
	\begin{theorem}[Abd\'on et al.\cite{APR}]
		This article introduces a definition of k-uniform threshold hypergraph through a binary sequence, a natural extension of the classical definition for threshold graphs. It characterizes some of its eigenvalues and multiplicities by means of combinatorial numbers, derived from edge counts. An important problem addressed in Spectral Graph Theory is to find graphs with few distinct eigenvalues. This allows to construct k-uniform threshold hypergraphs having an arbitrary number of vertices with few distinct eigenvalues.
		
	\end{theorem}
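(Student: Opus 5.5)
The statement is descriptive, so I would first make it precise and then prove each clause. \textbf{Definition.} A $k$-uniform threshold hypergraph $H$ on $n$ vertices is built from a binary sequence $b_1b_2\cdots b_n$, with $k-1$ initial vertices and $b_p\in\{0,1\}$ afterwards. Adding vertex $p$ with $b_p=0$ creates no hyperedges. Adding it with $b_p=1$ creates every $k$-set consisting of $p$ and $k-1$ earlier vertices, i.e.\ $\binom{p-1}{k-1}$ new hyperedges; for $k=2$ this is the graph construction of Section~\ref{sec:intro}. \textbf{Matrix.} I would use the standard hypergraph adjacency matrix $A(H)$, with $A(u,v)$ equal to the codegree, the number of hyperedges containing both $u$ and $v$, and zero diagonal. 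Grouping the sequence into maximal runs $0^{s_1}1^{t_1}\cdots 0^{s_m}1^{t_m}$ gives the same $2m$-class partition as before. I would show that any two vertices in the same run are twins: $A(u,w)=A(v,w)$ for every $w\notin\{u,v\}$. This holds because a hyperedge's membership depends only on the position of its latest dominating vertex relative to the others, and swapping two vertices of one run preserves this data.

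\textbf{Local eigenvalues.} With twins established, the argument of Theorem~\ref{thm:DQlocal} transfers verbatim. For twins $u,v$ the vector $e_u-e_v$ satisfies $A(e_u-e_v)=-A(u,v)(e_u-e_v)$. So each run $R$ of size $r\ge2$ contributes the eigenvalue $-c_R$ with multiplicity at least $r-1$, where $c_R$ is the common codegree of a pair in $R$. This $c_R$ is the ``combinatorial number derived from edge counts'' in the statement, and I would compute it by counting hyperedges according to their latest vertex $w$. Every dominating vertex $w$ at position $p$ later than both $u$ and $v$ contributes $\binom{p-3}{k-3}$, the choices of the remaining $k-3$ earlier members. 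If the run of $u,v$ is itself dominating, the later of the two, at position $q$, contributes an extra $\binom{q-2}{k-2}$. Summing gives a closed formula for $c_R$ as a sum of binomial coefficients determined by the positions of the $1$'s. For $k=2$ this reduces to $-1$ for dominating runs and $0$ for isolated runs, matching Theorem~\ref{thm:eigloc}.

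\textbf{Remaining spectrum.} The run partition is equitable by the twin property, so Steps 2--4 of the proof of Theorem~\ref{thm:DQquotient} apply unchanged. The characteristic polynomial factors as $\chi_{Q}(x)\prod_R(x+c_R)^{|R|-1}$, where $Q$ is the $2m\times2m$ quotient matrix, whose entries are again binomial sums.

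\textbf{Few distinct eigenvalues.} The simplest family is $0^{s}1^{t}$, where $m=1$. Here the spectrum consists of the two local values $-c_{S}$ and $-c_{T}$ together with the two eigenvalues of a $2\times2$ quotient, so there are at most four distinct eigenvalues for every $n=s+t$. This already yields $k$-uniform threshold hypergraphs of arbitrary order with few distinct eigenvalues. A second construction would choose longer sequences in which several runs share a codegree, or in which a quotient eigenvalue coincides with a local one. I would look for such coincidences by solving small binomial identities between the $c_R$.

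\textbf{Main obstacle.} I expect the hardest step to be the bookkeeping in the twin claim and in the closed form for $c_R$ when $k\ge3$, especially for vertices among the initial $k-1$. These can sit in a run with no earlier vertices, so the ``later of $u,v$'' term must be handled separately. I would verify the formula against small cases, for example $k=3$ with the sequence $0^21^2$, before trusting it in general.
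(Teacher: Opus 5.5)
The survey does not prove this statement. It paraphrases the abstract of Abd\'on et al.\ \cite{APR} and does not even say which matrix is meant, so there is no argument to compare yours with. What you supply is a precise formalization (the binary-sequence construction plus the codegree adjacency matrix) and a proof under it, and that proof is sound.

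\textbf{The key step.} Your twin claim is best written as one observation. A $k$-set is a hyperedge if and only if its latest vertex has bit $1$. Transposing two vertices $u<v$ of one run can only move the latest vertex of a $k$-set to another position in $[u,v]$, hence within the run, so its bit never changes. The transposition is therefore an automorphism of $H$. This single fact gives four things at once:
\begin{enumerate}
\item the twin property;
\item constancy of $c_R$ over all pairs in $R$, which you use without comment, though it also follows in one line from twinness, since $A(u,v)=A(u,w)$ when $v,w$ are twins and $u\notin\{v,w\}$;
\item run-constancy of degrees;
\item equitability of the run partition.
\end{enumerate}
After that, Steps 2--4 of Theorem~\ref{thm:DQquotient} do carry over verbatim.

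\textbf{The closed form for $c_R$.} For a dominating run, your expression $\binom{q-2}{k-2}+\sum_{p>q,\,b_p=1}\binom{p-3}{k-3}$ looks pair-dependent, but it is not. Moving $q$ one step later inside the run changes it by $\binom{q-1}{k-2}-\binom{q-2}{k-2}-\binom{q-2}{k-3}=0$, by Pascal's rule. You can therefore evaluate at the last position of $R$ to get a clean closed form.

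\textbf{The initial vertices.} The obstacle you anticipate here is not real. No $k$-set has its latest vertex among the first $k-1$ positions. The same automorphism argument therefore shows that the initial vertices are twins of one another and of the whole first run, whatever its bit. This is exactly how the first vertex joins the first run when $k=2$. Consistently, the extra term $\binom{q-2}{k-2}$ vanishes for $q\le k-1$.

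\textbf{Scope.} Two remarks.
\begin{enumerate}
\item The last clause of the statement is met even more cheaply by taking every bit after the initial vertices equal to $1$. This gives the complete $k$-uniform hypergraph, whose codegree matrix $\binom{n-2}{k-2}(J-I)$ has two distinct eigenvalues. Your $m=1$ family, with at most four, gives a non-complete example. Whatever sharper construction \cite{APR} actually contains cannot be checked from the survey.
\item Your conclusions depend on your choice of matrix, but they transfer at once to Laplacian-type matrices $\Deg_H\mp A(H)$, whose diagonal is run-constant by the automorphism. There $e_u-e_v$ has eigenvalue $d_R\pm c_R$, where $d_R$ is the common diagonal entry on $R$.
\end{enumerate}
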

	
	\subsection{Chain graphs: the bipartite analogue}
	
	Chain graphs (equivalently $\{C_3,C_5,2K_2\}$-free bipartite graphs, or double nested graphs) are the natural bipartite counterpart of threshold graphs, built by an analogous (but not identical) binary creation sequence, and are the extremal graphs for adjacency spectral radius among bipartite graphs of given order and size, exactly as threshold graphs are extremal among all graphs. Their adjacency eigenvalue location \cite{AlazemiEtAl2016}, Laplacian spectrum \cite{AlazemiEtAl2022}, Seidel spectrum and energy \cite{MandalMehatariDas2022}, eccentricity spectra \cite{Qiao}, eccentricity inertia indices \cite{HZ}, energy and Laplacian energy \cite{DasEtAl2020} -- each have been studied in close parallel to the threshold-graph results surveyed above, and eigenvalue-free intervals for the distance-based matrices of both threshold and chain graphs are treated together in \cite{AlazemiEtAl2021}. Notably, connected chain graphs do not generally have diameter at most $2$ (diameter $3$ is typical once the creation sequence has more than one alternation), so the clean master identity of Proposition~\ref{prop:master} is unavailable for chain graphs' distance Laplacian, and the distance-type spectral theory of chain graphs must be — and has been — built by more direct means.
	
	\subsection{Random threshold graphs}
	A random threshold graph is a random graph generated by a stochastic process in which vertices are added one at a time, each being designated as either an isolated vertex or a dominating vertex. In the random model, the type of each newly added vertex is determined probabilistically, often according to a fixed probability $p$ of adding a dominating vertex and $(1-p)$ of adding an isolated vertex. Thus, the resulting graph depends on a random binary sequence encoding the construction process. Random threshold graphs form an important class of random graphs because they combine a simple probabilistic construction with a highly structured graph-theoretic characterization. As noted in Section~\ref{sec:adjacency}, threshold graphs generated by a random (e.g.\ P\'olya urn) process rather than a fixed creation sequence have been studied, with exact finite-$n$ degree distributions and asymptotic ($n\to\infty$) eigenvalue distribution results for both the adjacency and Laplacian matrices, plus an explicit closed-form Laplacian eigenbasis in the random setting. In 2008, Reilly et al. \cite{Reilly2009} proposed two equivalent models for random threshold graphs and investigated the properties including chromatic number, algebraic connectivity, Hamiltonian cycles, and perfect matchings. Recently, in 2026, Yu et al. \cite{Yu2026} introduces a random threshold graph model based on a two-colour P\'olya urn process; it studies degree distributions, Laplacian matrices and several spectral properties.

	\section{Open problems}
	\label{sec:open}
	
	We collect here open problems drawn directly from the surveyed literature, together with one problem (Open Problem~\ref{op:unify}) suggested by the comparative view of Table~\ref{tab:cospectral}.
	
	\begin{openproblem}[Distance signless Laplacian of threshold graphs]
		\label{op:dqthreshold}
		Give an explicit, closed-form distance signless Laplacian spectrum for threshold graphs directly from the creation sequence (analogous to Theorem~\ref{thm:dlthreshold} for the distance Laplacian), and determine whether threshold graphs are determined by their distance signless Laplacian spectrum.
	\end{openproblem}
	
	\begin{openproblem}[A distinguished graph for cographs]
		Aguilar, Ficarra, Schurman, and Sullivan \cite{AguilarEtAl2020} ask whether, within the broader class of cographs, there is a distinguished graph (or small family of graphs) playing the organizing role that the anti-regular graph $H_n$ plays for threshold graphs.
	\end{openproblem}
	
	\begin{conjecture}[Helmberg--Trevisan \cite{HelmbergTrevisan2017}]
		\label{conj:universaldominance}
		Every graph is spectrally threshold dominated (Theorem~\ref{thm:spectralthresholddominance}): for every graph $G$ and every $k$, some threshold graph of the same order and size as $G$ majorizes $G$'s top-$k$ Laplacian eigenvalue sum.
	\end{conjecture}
	
	\begin{openproblem}[Universal spectral threshold dominance]
		Conjecture~\ref{conj:universaldominance}, if true, would show that threshold graphs maximize Laplacian energy among \emph{all} graphs of given order and size, and would be equivalent to Brouwer's conjecture for Laplacian eigenvalue sums. It is verified for split graphs and for disjoint unions/complements of spectrally threshold dominated graphs, but remains open in general.
	\end{openproblem}
	
	\begin{openproblem}[Main eigenvalues of quasi-threshold graphs, $k\ge3$]
		The characterization of quasi-threshold graphs with $k$ main signless Laplacian eigenvalues (Section~\ref{sec:signless}) is complete for $k=2$ but only partial for $k=3$ \cite{JonesTrevisanVinagre2025}; a full characterization for general $k$ remains open.
	\end{openproblem}
	
	\begin{openproblem}[Unifying spectral determination]
		\label{op:unify}
		Table~\ref{tab:cospectral} shows that threshold graphs are determined by their adjacency, Laplacian, and distance Laplacian spectra, but \emph{not} by their distance or Seidel spectra. Is there a structural criterion on the matrix $M$ (e.g.\ some algebraic relationship to $J$, or to the twin-class/quotient-matrix reduction of Section~\ref{sec:intro}) that predicts, without a separate ad hoc proof for each case, whether $M$ determines threshold graphs up to isomorphism?
	\end{openproblem}
	
	\begin{openproblem}[Eccentricity-matrix cospectrality]
		Whether threshold graphs are determined by their eccentricity spectrum does not appear to have been settled in the literature surveyed here, despite the detailed eigenvalue-location results of Theorems~\ref{thm:eccthreshold}--\ref{thm:eccentricity2022}.
	\end{openproblem}
	
	\begin{openproblem}[Extremal threshold graphs for fixed $(n,m)$]
		While it is known that the adjacency-spectral-radius maximizer among connected graphs of order $n$ and size $m$ is always a threshold graph, no general closed-form description of \emph{which} threshold graph is extremal is known; current progress is via metaheuristic search \cite{RadanovicEtAl2024} rather than closed-form characterization.
	\end{openproblem}
	
	\begin{openproblem}[Integral threshold graphs beyond the complete split graph]
		\label{op:integralgeneral}
		Section~\ref{sec:integral} characterizes $A$-, $S$-, $D$-, $D^Q$-, and $\varepsilon$-integrality completely for the two-block family $CS(s,t)$, but the general ($m\ge2$) case remains open for every one of these five matrices. Even the next simplest case, $m=2$ (four blocks), is uncharacterized: is there a clean Diophantine condition analogous to Theorem~\ref{thm:csdiscriminants}, or — as the block count grows — does the governing polynomial's degree grow with $m$ in a way that forecloses a uniform closed-form answer?
	\end{openproblem}
	
	\section{Conclusion}
	\label{sec:conclusion}
	
	Threshold graphs occupy a genuinely unusual position in spectral graph theory: a single combinatorial object — the binary creation sequence — drives exact or near-exact spectral computations across essentially every matrix the field has studied, from the classical adjacency and Laplacian matrices through the modern distance-based and eccentricity matrices. The common machinery throughout is the equitable partition induced by twin vertices, reducing an $n$-dimensional spectral problem to a quotient matrix of size proportional to the number of alternations in the creation sequence — together with, for several matrices, a master identity connecting the matrix of interest to a better-understood relative (most notably the distance-Laplacian-to-Laplacian identity of Proposition~\ref{prop:master}).
	
	The literature has matured furthest for the adjacency matrix (essentially complete: eigenvalue location, cospectrality, energy, and eigenvalue-free intervals are all settled) and the ordinary and distance Laplacians (both admit exact closed forms via Merris's theorem and its master-identity consequence). It is comparatively less developed for the distance signless Laplacian (Open Problem~\ref{op:dqthreshold}) and for several natural generalizations — quasi-threshold graphs, weighted threshold graphs, and threshold hypergraphs — each the subject of active work as recently as 2024 and 2025. The cross-cutting integrality study of Section~\ref{sec:integral} reinforces this same divide from a new angle: $L$ and $D^L$ are unconditionally integral for every threshold graph via the master identity, while $A$, $S$, $D$, $D^Q$, and $\varepsilon$ each require a genuine Diophantine condition, fully resolved here only for the simplest ($m=1$) family. We expect the open problems collected in Section~\ref{sec:open}, together with the still-developing extremal spectral radius literature of Section~\ref{sec:extremal} and the integrality questions of Section~\ref{sec:integral}, to remain fruitful directions for the next several years of research on this exceptionally well-behaved graph class.
	
	\medskip
	We conclude this study with two tables that show the eigenvalue-free intervals of threshold graphs and chain graphs based on multiple graph matrices, which do not include any non-trivial eigenvalues.
	\newpage
	
	\begin{table}[h]
		\centering
		\begin{tabular}{lll}
			\toprule
			Sl. No. & Matrix & Eigenvalue-free Interval \\
			\midrule
			1. & $A$ & $(-1,0)$  \cite{JacobsTrevisanTura2013} \\
			2. & $A$ & $\left[\frac{-1-\sqrt2}{2},\ \frac{-1+\sqrt2}{2}\right] \cite{Ghorbani2019}  $ \\
			3. & $D$ & $(-2,-1)$ \cite{LuHuangLou2018}\\
			4. & $D$ & $\left[\frac{-3-\sqrt2}{2},\ -1\right)$ \cite{AlazemiEtAl2021} \\
			5. & $ \varepsilon$ &$(-2,-1)$ \cite{QiuTang2022}\\
			6. & $\varepsilon$ & $(-1-\sqrt2,-2)\cup (-2,-1)$ \cite{AndelicEccentricity2022}\\
			7. & $S$&$(-\sqrt2,\sqrt2)$ \cite{XiongHou2022}\\
			8.&$L$&Not addressed\\
			9.&$D^L$&Not addressed\\
			10.&$D^Q$&Not addressed\\
			\bottomrule
		\end{tabular}
		\caption{Eigenvalue-free interval of threshold graphs.}
		\label{tab:thresholdeigenvaluefreeinterval}
	\end{table}

	\begin{table}[h]
		\centering
		\begin{tabular}{lll}
			\toprule
			Sl. No. & Matrix & Eigenvalue-free Interval \\
			\midrule
			1. & $A$ & $[-\frac{1}{2},0)\cup (0,\frac{1}{2}]$  \cite{AlazemiEtAl2016} \\
			2. & $D$ & $(-3,-2)$ \cite{AlazemiEtAl2021}\\
			3. & $S$ & $[-2,0]$ \cite{XiongHou2024}\\
			4. & $\varepsilon,L,D^L,D^Q$ & Not addressed \\
			
			\bottomrule
		\end{tabular}
		\caption{Eigenvalue-free interval of chain graphs.}
		\label{tab:chaineigenvaluefreeinterval}
	\end{table}

	
	\section*{Acknowledgments}
	
	The research of Pallabi Manna is supported by Anusandhan National Research Foundation
	(ANRF)-India under the National Postdoctoral Fellowship scheme (File No.\ PDF/2025/006751). This
	author also acknowledges the excellent research facilities provided by the Department of Mathematics,
	IIT Hyderabad, and the mentorship of Dr.\ Rajesh Kannan for his constant support and motivation. The second author acknowledges excellent research facilities provided by SRM University-AP.
	
	\section*{Statements and Declarations}
	
	\noindent\textbf{Competing Interests.} The authors have no competing interests to declare that are
	relevant to the content of this article.
	
	\section*{Data Availability}
	
	Data sharing is not applicable to this article, as no datasets were used or generated during this
	study.

\end{document}